\documentclass[11pt, a4paper, abstract=yes]{scrartcl}
\usepackage[utf8]{inputenc}
\usepackage[T1]{fontenc}
\usepackage{graphicx}
\usepackage{longtable}
\usepackage{wrapfig}
\usepackage{rotating}
\usepackage[normalem]{ulem}
\usepackage{amsmath}
\usepackage{amssymb}
\usepackage{hyperref}
\usepackage{amsthm,amssymb,amsmath}
\usepackage{hyperref}
\usepackage{graphicx}
\usepackage[T1]{fontenc}\usepackage{lmodern}
\theoremstyle{definition}
\newtheorem{definition}{Definition}
\newtheorem{mexample}[definition]{Example}
\theoremstyle{remark}
\newtheorem{remark}[definition]{Remark}
\theoremstyle{plain}
\newtheorem{lemma}[definition]{Lemma}

\newtheorem{proposition}[definition]{Proposition}
\newtheorem{theorem}[definition]{Theorem}
\newtheorem{corollary}[definition]{Corollary}

\newcommand{\vek}[1]{\boldsymbol{#1}}

\author{Jan Snellman\thanks{Department of Mathematics, Linköping University, 581 83 Linköping, Sweden. \texttt{jan.snellman@liu.se}. ORCID: \href{https://orcid.org/0009-0002-6676-5068}{0009-0002-6676-5068}}}
\usepackage[T1]{fontenc}
\usepackage{orcidlink}
\usepackage{booktabs}
\usepackage{tikz}
\usepackage{etoolbox}
\AtEndEnvironment{mexample}{\hfill$\Diamond$}
\AtEndEnvironment{remark}{\hfill$\triangleright$}
\AtEndEnvironment{definition}{\hfill$\triangleleft$}
\hypersetup{hidelinks}
\newcommand{\Koz}{\operatorname{Koz}}
\newcommand{\shift}{\operatorname{shift}}

\newcommand{\conv}{\operatorname{conv}}
\newcommand{\indeg}{\operatorname{indeg}}
\newcommand{\tp}{\operatorname{top}}
\newcommand{\HF}{\operatorname{HF}}
\newcommand{\Ext}{\mathord{\wedge }V}
\author{Jan Snellman \orcidlink{0009-0002-6676-5068}}
\date{October 10, 2026}
\title{Hilbert functions over the exterior algebra and \(f\)-vectors in higher rank\\\medskip
\large II: Generalizing Linusson's count to sumsets of f-vectors}
\hypersetup{
 pdfauthor={Jan Snellman \orcidlink{0009-0002-6676-5068}},
 pdftitle={Hilbert functions over the exterior algebra and \(f\)-vectors in higher rank},
 pdfkeywords={},
 pdfsubject={},
 pdfcreator={Emacs 30.2 (Org mode 9.7.11)}, 
 pdflang={English}}
\let\autocite\cite

\begin{document}

\maketitle
\begin{abstract}
Let \(V\) be an \(n\)-dimensional vector space over a field \(k\) and \(\Ext\) its exterior algebra,
let \(F=\bigoplus_{i=1}^{r}(\Ext)g_i\) be a graded free \(\Ext\)-module with \(\deg g_i=d_i\), and let \(M\)
range over the monomial submodules of Amata and Crupi. The Hilbert function of \(F/M\) is a shifted
sum of \(f\)-vectors of simplicial complexes, and the companion paper \autocite{SnellmanArticle1} determines the convex hull of the
set of such sums. Here we count them. For \(r=1\) the count is Linusson's (1999), obtained from a
recursion through compressed complexes; we show that \textbf{no new recursion is needed for general
\(r\)}. We introduce two statistics of an \(f\)-vector, its top nonvanishing degree and the initial
degree of the corresponding ideal, and prove that the number of achievable Hilbert functions is a
product of \(r-1\) transfer matrices, one per consecutive generator-degree gap, whose entries are
differences of Linusson's own refined counts \(E^p(n,k)\). The rank therefore enters only as the
length of a matrix product. The bridge is a reformulation of the counting problem, proved from
Amata and Crupi's lex submodules, that replaces a sumset cardinality by a count of tuples subject to
a nearest-neighbour constraint. We give the same theorem for the \emph{proper} variant that the companion
paper's polytope is built from, prove the lemma that makes the restriction legitimate (lexification
preserves properness), and derive consequences: a linear recurrence in \(r\) for
constant gaps together with the exact exponential asymptotics \(K\lambda^{\,r}\) when the gap is
positive, polynomiality in \(r\) when all gaps vanish, an exact threshold beyond which the count
degenerates to a plain power, and polynomiality in the number of vertices.
\end{abstract}

\medskip\noindent\textbf{Keywords.} exterior algebra; Hilbert function; \(f\)-vector; simplicial
complex; enumeration; transfer matrix; lex submodule; Kruskal--Katona theorem.

\smallskip\noindent\textbf{2020 Mathematics Subject Classification.} Primary 05A15, 13F55;
Secondary 05E40, 13D40, 06A07.
\medskip

\setcounter{tocdepth}{2}
\tableofcontents
\section{Introduction}
\label{sec-intro}
\subsection{The rank-one count}
\label{sec:orgeb0ff88}

Let \(k\) be a field, let \(V=k^n\), and let \(\Ext\) be the exterior algebra on \(V\), graded with
degree-\(j\) component the exterior power \(\wedge^jV\) of dimension \(\binom nj\). The dimension
\(n=\dim V\) is suppressed from the notation \(\Ext\). (We write \(\Ext\) rather than
\(E\) because the letter \(E\) is spent below on Linusson's counting function \(E^p(n,k)\);
Section \ref{sec-notation} says more, and the companion article keeps \(E\).) Every monomial of \(\Ext\) is
squarefree, so every monomial ideal of \(\Ext\) is generated by the non-faces of an order family
\(\Delta\subseteq2^{[n]}\),
\begin{equation}\label{eq:exterior-face-ideal}
  I_\Delta\;=\;\bigl(e_\sigma\;:\;\sigma\notin\Delta\bigr)\;\subseteq\;\Ext ,
\end{equation}
and \(\dim_k\bigl((\Ext)/I_\Delta\bigr)_j=f_j(\Delta)\) exactly, with no correction term. The construction is the
one Stanley \autocite{Stanley1996} carries out in a polynomial ring, but \(I_\Delta\) here is \textbf{not} the
Stanley--Reisner ideal, because it lives in \(\Ext\) and not in \(k[x_1,\dots,x_n]\); it is the
\textbf{exterior face ideal}, and \((\Ext)/I_\Delta\) is the \textbf{exterior face ring}, also called Sköldberg's
\textbf{indicator algebra} \autocite{Skoldberg1999}. Unlike the ordinary Stanley--Reisner ring it is
finite-dimensional automatically, since \(\Ext\) is. The companion paper keeps the name
Stanley--Reisner ideal for \eqref{eq:exterior-face-ideal} while defining it inside \(\Ext\); the object
is the same one, and the two papers agree on the symbol \(I_\Delta\). Since passing to
an initial ideal preserves Hilbert functions, the set of Hilbert functions of graded quotients of
\(\Ext\) \emph{is} the set of \(f\)-vectors of simplicial complexes on at most \(n\) vertices.

Three classical results describe that set. Kruskal--Katona \autocite{Katona1968,Kruskal1963} determines it exactly; Kozlov \autocite{Kozlov1997}
computes its convex hull; and Linusson \autocite{Linusson1999} \textbf{counts} it. Linusson's method is a recursion on
compressed complexes: peel off the last vertex, split the complex into the part avoiding it and the
link of it, and record how far the first part is filled. The bookkeeping variable of that recursion
is his \(E^p(n,k)\), the number of \(f\)-vectors that are maximal up to level \(k\) and no
further, and that is the object this paper turns out to need.
\subsection{The rank-\(r\) question}
\label{sec:org8905989}

Let \(F=\bigoplus_{i=1}^r(\Ext)g_i\) with \(\deg g_i=d_i\in\mathbb Z\), \(d_1\le\cdots\le d_r\). Amata and
Crupi's monomial submodules are those generated by monomials of \(F\); such an \(M\) is automatically
a direct sum \(I_1g_1\oplus\cdots\oplus I_rg_r\) of monomial ideals of \(\Ext\), and the Hilbert function
of the quotient splits,
\begin{equation}\label{eq:auto-1}
  \HF\Bigl(\frac FM\Bigr)(j)\;=\;\sum_{i=1}^{r}f_{j-d_i}(\Delta_i),
\end{equation}
a sum of \(r\) independently chosen \(f\)-vectors, each shifted by its own generator degree. Writing
\(\mathcal F_n\) for the set of \(f\)-vectors of order ideals of \(\mathcal P([n])\) and
\(\shift^d\) for the degree-shift, the achievable set is the sumset
\begin{equation}\label{eq:auto-2}
  \mathrm{FF}(n;\vek d)\;=\;\Bigl\lbrace\sum_{i=1}^{r}\shift^{d_i}(\vek s_i)\;:\;\vek s_i\in\mathcal F_n\Bigr\rbrace .
\end{equation}
The companion paper \autocite{SnellmanArticle1} determines \(\conv(\mathrm{FF}(n;\vek d))\): it is a Minkowski sum of \(r\) shifted copies
of the Kozlov simplex. \textbf{This paper computes \(|\mathrm{FF}(n;\vek d)|\).}

The problem is a sumset cardinality, and those are usually hard. Two facts show where the difficulty
sits. Write
\begin{equation}\label{eq:sumset-map}
\begin{split}
  \Sigma_{\vek d}\colon\mathcal F_n^{\,r} &\longrightarrow \mathrm{FF}(n;\vek d),\\
  \Sigma_{\vek d}(\vek s_1,\dots,\vek s_r) &= \sum_{i=1}^{r}\shift^{d_i}(\vek s_i)
\end{split}
\end{equation}
for the summation map, which is surjective by the definition \eqref{eq:auto-2} of the sumset. It is
\textbf{injective} exactly when the generator degrees are far enough apart: precisely, when every gap
\(d_{i+1}-d_i\) is at least \(n+1\). In that regime \(|\mathrm{FF}(n;\vek d)|=|\mathcal F_n|^r\) and
there is nothing to do. Everything of interest happens when the shifts overlap and distinct tuples
collide, and the whole of this paper is the search for a set of representatives that
\(\Sigma_{\vek d}\) maps bijectively.
\subsection{The result}
\label{sec:org72502b7}

The answer is a transfer-matrix product, and it is built from Linusson's own refinement rather than
from a new recursion. Attach to \(\vek s\in\mathcal F_n\) two statistics: \(\tp(\vek s)\), the greatest \(j\)
with \(s(j)>0\); and \(\indeg(\vek s)\), the least \(j\) with \(s(j)<\binom nj\), the initial degree
of the corresponding monomial ideal. Let \(A_n[\tau][\rho]\) count the \(\vek s\in\mathcal F_n\) with
\(\tp(\vek s)=\tau\) and \(\indeg(\vek s)=\rho\).

Three objects are built from \(A_n\), and all three are defined again, with their index ranges spelled
out, in Definition \ref{def-uT}. The \textbf{transfer matrix} \(T_n(\delta)\), one for each integer
\(\delta\ge0\), has entries
\begin{equation}\label{eq:intro-T}
  T_n(\delta)[\tau][\tau']=\sum_{\rho>\tau-\delta}A_n[\tau'][\rho] ,
\end{equation}
so its rows and its columns are both indexed by values of \(\tp\): the row index is the \(\tp\) of
the previous summand and the column index the \(\tp\) of the current one. The \textbf{row vector}
\(\vek u_n\) is the \(\tp\) marginal of \(A_n\),
\begin{equation}\label{eq:intro-u}
  u_n[\tau]=\sum_{\rho}A_n[\tau][\rho] ,
\end{equation}
counting the \(f\)-vectors with \(\tp=\tau\) and no condition at all on \(\indeg\); it enters because
the first summand of a tuple is the one nothing constrains. And \(\vek 1_n\) is the all-ones column
vector, whose only role is to sum the entries of what stands to its left. Finally the \textbf{gaps}: there
are \(r-1\) of them, one for each consecutive pair of generator degrees, so the index \(i\) runs from
\(2\) to \(r\) and there is no \(\delta_1\).

With those in hand, the answer (Theorem \ref{thm-main}) is
\begin{equation}\label{eq:auto-3}
  \lvert\mathrm{FF}(n;\vek d)\rvert\;=\;\vek u_n\cdot T_n(\delta_2)\,T_n(\delta_3)\cdots T_n(\delta_r)\cdot\vek 1_n,
  \qquad \delta_i=d_i-d_{i-1} .
\end{equation}
The product has \(r-1\) factors, and is empty when \(r=1\).

Lemma \ref{lem-joint} identifies \(A_n\) as a difference of Linusson's numbers,
\begin{equation}\label{eq:auto-4}
  A_n[\tau][\rho]\;=\;E^n(\tau,\rho-1)-E^n(\tau-1,\rho-1),
\end{equation}
so one run of his recursion at \(p=n\) supplies everything. The rank \(r\) enters only as the length
of a matrix product.

The bridge is Theorem \ref{thm-chaining}, which replaces the sumset by a count of tuples subject to a
constraint relating consecutive entries only:
\begin{equation}\label{eq:auto-5}
  \lvert\mathrm{FF}(n;\vek d)\rvert=\#\bigl\lbrace(\vek s_1,\dots,\vek s_r)\in\mathcal F_n^{\,r}:
   \tp(\vek s_{i-1})<\indeg(\vek s_i)+\delta_i\ \text{ for } i=2,\dots,r\bigr\rbrace .
\end{equation}
That is proved from Amata and Crupi's theory of lex submodules \autocite{AmataCrupi2020KK,AmataCrupi2020}, which supplies a canonical
representative for each achievable Hilbert function; the constraint is the chaining condition of
\autocite[Definition 2.7]{AmataCrupi2020KK}, read off the \(f\)-vectors. Once the constraint is nearest-neighbour, the transfer-matrix
form is a rearrangement.
\subsection{What is proved, and what is only observed}
\label{sec:orgb73b7d5}

The distinction is kept explicit throughout.

\begin{center}
\small
\begin{tabular}{ll}
result & status\\
\hline
Thm. \ref{thm-linusson}, Linusson's recursion & cited; transcription verified against his tables\\
Lem. \ref{lem-joint}, the joint distribution & proved; verified entrywise for \(n\le6\)\\
Thm. \ref{thm-chaining}, the chaining reformulation & proved\\
Thm. \ref{thm-main}, the counting theorem & proved; verified, 41 shift vectors\\
Lem. \ref{lem-lexproper}, lexification preserves properness & proved\\
Thm. \ref{thm-proper}, the proper count & proved; reproduces recorded sequences\\
Lem. \ref{lem-transfer-support}, the support of \(T_n(\delta)\) & proved; verified, \(n\le6\)\\
Prop. \ref{prop-recurrence}, recurrence in \(r\) & proved\\
Prop. \ref{prop-growth-constant}, the constant \(K\) for \(\delta\ge1\) & proved; hypotheses verified, \(n\le6\)\\
Prop. \ref{prop-delta0}, degree bound at \(\delta=0\) & proved\\
Thm. \ref{thm-delta0-degrees}, the \textbf{exact} degrees at \(\delta=0\) & proved; verified, \(n\le8\)\\
Prop. \ref{prop-perron-threshold}, the Perron root above threshold & proved; verified, \(n\le6\)\\
Prop. \ref{prop-normalform}, the normal form \(T_n(\delta)=P_n^{\delta}B_n\) & proved; verified, \(n\le8\)\\
Thm. \ref{thm-poly-p}, polynomiality in \(p\) & proved modulo Linusson's Thm. 3.3\\
Thm. \ref{thm-asymptotics-p}, degree and leading coefficient in \(p\) & proved modulo the same; verified, \(n\le5\)\\
Thm. \ref{thm-asymptotics-p-proper}, the proper analogue in \(p\) & proved modulo the same; verified, \(n\le5\)\\
Prop. \ref{prop-fnn2}, \(\lvert S_n\rvert=F^n(n-2)\) & proved, by Alexander duality\\
Cor. \ref{cor-persymmetry}, \(A_n\) is persymmetric & proved\\
\end{tabular}
\captionof{table}{\label{tab-what-is-proved}What is proved and what is only observed, statement by statement.}

\end{center}

Two words in that table carry a fixed meaning, and it is worth fixing it here rather than leaving it
to be inferred. \textbf{Proved} means proved in this paper, or in the cases marked so, proved modulo a
citation. \textbf{Verified} means something weaker and quite specific: the statement was checked by
computation against a route independent of the formula being tested, over the range given in the
cell. The three independent routes are brute-force enumeration of the order ideals themselves,
Linusson's own printed tables, and Amata and Crupi's \texttt{ExteriorModules} package for Macaulay2; Section
\ref{sec-verification} says which was used where. For a few entries, which that section lists, the
second route is instead a different algebraic expression of the same count, because beyond small
\(n\) no enumeration is feasible; such a check confirms the algebra, not the count. So ``verified, \(n\le8\)'' is a finite check and not
an argument, and a result labelled only ``verified'' is not claimed to be proved.

A fourth route is of a different kind and is deliberately not folded into the table. The
combinatorial content of the paper has been \textbf{formalised in Lean 4} with Mathlib, which is a stronger
assurance than any of the three above and covers a different, overlapping set of statements. It is
not a column here because the honest status does not fit in a cell: the formalisation is purely
combinatorial, so the counting theorems are certified unconditionally as statements about chained
tuples and only conditionally as statements about \(\lvert\mathrm{FF}(n;\vek d)\rvert\), the bridge to
the algebra being a hypothesis rather than a proof. Section \ref{sec-verification} sets out exactly
what lies inside and what lies outside.

The asymptotics in \(n\), which Linusson leaves as an open problem, are not attempted.
\section{Notation}
\label{sec-notation}
The notation is collected here, before anything below makes use of it. Each line of the table reads
across its two columns as a sentence, and points to the statement in which the symbol is actually
defined. The companion article carries a table in the same spirit, and where the two papers share a
symbol they also share its meaning.

\begingroup\small
\begin{center}
\begin{tabular}{@{}ll@{}}
  \toprule
  \multicolumn{2}{@{}l}{\emph{The ambient objects}}\\
  \(k\)                    & is a field, arbitrary and fixed throughout; nothing depends on it.\\
  \(V\)                    & is an \(n\)-dimensional \(k\)-vector space.\\
  \(\Ext\)                 & is the exterior algebra on \(V\) (Definition~\ref{def-setup}); the letter \(E\) is not used for it.\\
  \(\wedge^jV\)         & is its degree-\(j\) component, of dimension \(\binom nj\).\\
  \(I_\Delta\)             & is the exterior face ideal of \(\Delta\), inside \(\Ext\) (Equation~\ref{eq:exterior-face-ideal}).\\
  \addlinespace
  \multicolumn{2}{@{}l}{\emph{Complexes and their \(f\)-vectors}}\\
  \(\mathcal P([n])\)      & is the lattice of all subsets of \([n]\).\\
  \(\Delta\)               & is an order ideal of \(\mathcal P([n])\), and the void one is allowed.\\
  \(f_j(\Delta)\)          & is the number of members of \(\Delta\) having exactly \(j\) elements.\\
  \(\mathcal F_n\)         & is the set of \(f\)-vectors of all order ideals (Definition~\ref{def-fn}).\\
  \(S_n\)                  & is the set of \emph{proper} ones, those with \(s(0)=1\) and \(s(1)=n\).\\
  \(\shift^d\)             & re-indexes a vector by \(d\), prepending \(d\) zeros when \(d\ge0\).\\
  \addlinespace
  \multicolumn{2}{@{}l}{\emph{The two statistics}}\\
  \(\tp(\vek s)\)          & is the largest \(j\) with \(s(j)\neq0\) (Definition~\ref{def-stats}).\\
  \(\indeg(\vek s)\)       & is the least \(j\) at which \(s(j)\) falls below \(\binom nj\).\\
  \(\indeg(I)\)            & is the initial degree of a monomial ideal (Definition~\ref{def-setup}).\\
  \(A_n[\tau][\rho]\)        & is the joint distribution of \((\tp,\indeg)\) (Lemma~\ref{lem-joint}).\\
  \addlinespace
  \multicolumn{2}{@{}l}{\emph{Sumsets and the count}}\\
  \(\vek d=(d_1,\dots,d_r)\) & is the shift vector, always sorted, with gaps \(\delta_i=d_i-d_{i-1}\).\\
  \(\mathrm{FF}(n;\vek d)\) & is the sumset of the shifted copies of \(\mathcal F_n\) (Definition~\ref{def-fn}).\\
  \(\mathrm{FF}_{\mathrm{pr}}(n;\vek d)\) & is the proper one, built from \(S_n\) in its place.\\
  \(0^r\)                   & is the shift vector \((0,\dots,0)\) of length \(r\), all gaps zero.\\
  \(\mathrm{CT}(n;\vek d)\)   & is the set of chained tuples (Equation~\ref{eq:chained-set}), which \(\Sigma_{\vek d}\) sends onto \(\mathrm{FF}(n;\vek d)\).\\
  \(\mathrm{CT}_{\mathrm{pr}}(n;\vek d)\) & is the proper one, with every entry in \(S_n\).\\
  \(\Sigma_{\vek d}\), \(\Phi_{\vek d}\) & are the summation map (Equation~\ref{eq:sumset-map}) and its inverse (Equation~\ref{eq:water-filling}).\\
  \(F^p(n)\), \(E^p(n,k)\)  & are Linusson's two counts (Definition~\ref{def-linusson}).\\
  \(\vek u_n\), \(T_n(\delta)\), \(\vek 1_n\) & are a row vector, a transfer matrix and a column of ones (Definition~\ref{def-uT}).\\
  \(A'_n\), \(\vek u'_n\), \(T'_n(\delta)\) & are their proper versions, built over \(S_n\) (Theorem~\ref{thm-proper}).\\
  \(B_n\), \(P_n\)          & are the base matrix \(T_n(0)\) and the saturating shift (Proposition~\ref{prop-normalform}).\\
  \addlinespace
  \multicolumn{2}{@{}l}{\emph{Alexander duality}}\\
  \(\Delta^{\vee}\)        & is the Alexander dual of \(\Delta\) (Definition~\ref{def-alexander}).\\
  \(\mathcal B_n\)         & is the binomial box (Equation~\ref{eq:binomial-box}), which contains \(\mathcal F_n\).\\
  \(D_n\)                  & is the duality map (Equation~\ref{eq:def-D}) on \(\mathcal B_n\).\\
  \bottomrule
\end{tabular}
\captionof{table}{The notation of this paper, with the statement in which each symbol is defined.}
\label{tab-notation}
\end{center}
\endgroup

The companion article handles some of these objects under a different convention, so three points of
contact need stating.

First, \textbf{it writes \(E\) where this paper writes \(\Ext\)}. The two are synonyms, and the divergence
is deliberate: the letter \(E\) is spent here on Linusson's \(E^p(n,k)\), which occurs more often in
this paper than the algebra does, while the companion article does not use that function at all.

Second, \textbf{that article distinguishes three conventions} for ``simplicial complex on the ground set
\([n]\)'': (i) all order ideals of \(\mathcal P([n])\), (ii) the non-empty ones, (iii) those containing
\(\emptyset\) and every singleton. \textbf{This paper works throughout in convention (i)}: the void family
is a member of \(\mathcal F_n\), and it has to be, since \(\indeg\) attains its minimum \(0\) there and
that is what fixes the threshold of Proposition \ref{prop-threshold}. Convention (iii) is what \(S_n\)
picks out, and the two papers agree on the symbol \(S_n\).

Third, the companion article writes \(S_n^{+}\) for the \(f\)-vectors of the \textbf{non-empty} order
ideals, convention (ii). In the notation here that is
\begin{align}
  \mathcal F_n &\;=\; S_n^{+}\cup\lbrace\vek 0\rbrace,\label{eq:Fn-vs-Splus}\\
  \lvert\mathcal F_n\rvert &\;=\; \lvert S_n^{+}\rvert+1 .\label{eq:Fn-vs-Splus-card}
\end{align}
We keep \(\mathcal F_n\) rather than \(S_n^{+}\cup\lbrace\vek0\rbrace\) because every matrix below is
indexed by it.
\section{Setup}
\label{sec-setup}
We recall only what is needed; the companion paper has the full development.

\begin{definition}
Throughout, \(k\) is a field. It is arbitrary and fixed, and nothing below depends on it: every
object counted here is a Hilbert function, so only dimensions over \(k\) ever appear, and no
characteristic assumption is used anywhere. Let \(V\) have basis \(e_1,\dots,e_n\) and put
\begin{equation}\label{eq:def-ext}
  \Ext\;=\;\frac{k\langle e_1,\dots,e_n\rangle}{\bigl(e_ie_j+e_je_i,\ e_i^2\bigr)},
\end{equation}
graded with degree-\(j\) component \(\wedge^jV\) of dimension \(\binom nj\). Write
\(\mathfrak m=(e_1,\dots,e_n)\), and for \(\sigma\subseteq[n]\) let \(e_\sigma\) be the corresponding
monomial. For \(r\ge1\) and \(d_1\le\cdots\le d_r\) in \(\mathbb Z\), let
\(F=\bigoplus_{i=1}^r(\Ext)g_i\) with \(\deg g_i=d_i\). A \textbf{monomial submodule} is one generated by
monomials \(e_\sigma g_i\); such an \(M\) decomposes uniquely as \(M=\bigoplus_iI_ig_i\) with each
\(I_i\subseteq \Ext\) a monomial ideal \autocite[Definition 2.1]{AmataCrupi2020}. It is \textbf{proper} if no \(I_i\) has
a minimal generator of degree \(\le1\).

For a monomial ideal \(I\subseteq \Ext\) write
\begin{equation}\label{eq:indeg-ideal}
  \indeg(I)=\min\lbrace j:I_j\ne0\rbrace ,
\end{equation}
its \textbf{initial degree}, with \(\indeg(0)=n+1\) since the minimum is then over an empty set.
Equivalently it is the least degree of a minimal generator of \(I\): if \(j_0\) is least with
\(I_{j_0}\ne0\) then \((\mathfrak mI)_{j_0}\) is built from \(I_{j_0-1}=0\), so every non-zero
element of \(I_{j_0}\) is a minimal generator, and no minimal generator has smaller degree.
\label{def-setup}
\end{definition}

\begin{definition}
An \textbf{order ideal} of \(\mathcal P([n])\), the lattice of subsets of \([n]\), is a family
\(\Delta\subseteq\mathcal P([n])\) closed under taking subsets, with
\(f_j(\Delta)=|\lbrace\sigma\in\Delta:|\sigma|=j\rbrace|\). Put
\begin{align}
  \mathcal F_n &=\lbrace(f_0(\Delta),\dots,f_n(\Delta)):\Delta\ \text{an order ideal of }\mathcal P([n])\rbrace,
    \label{eq:def-Fn}\\
  S_n &=\lbrace \vek s\in\mathcal F_n\;:\;s(0)=1,\ s(1)=n\rbrace .
    \label{eq:def-Sn}
\end{align}
In \eqref{eq:def-Fn} the void family is included and singletons are not required. The elements of
\eqref{eq:def-Sn} are the \textbf{proper} ones, the \(f\)-vectors of simplicial complexes using the full
vertex set.

For the shift map it is convenient to regard an \(f\)-vector as a function on all of \(\mathbb Z\)
that happens to vanish outside \(0\le j\le n\). Write \(\mathbb Z^{(\mathbb Z)}\) for the group of
finitely supported functions \(\mathbb Z\to\mathbb Z\), so that
\(\mathcal F_n\subseteq\mathbb Z^{(\mathbb Z)}\). Then for every \(d\in\mathbb Z\) the shift map
is the group automorphism
\begin{equation}\label{eq:def-shift}
\begin{split}
  \shift^d\colon\mathbb Z^{(\mathbb Z)} &\longrightarrow \mathbb Z^{(\mathbb Z)},\\
  \shift^d(\vek s)(j) &= s(j-d),
\end{split}
\end{equation}
which moves the entry in position \(j\) to position \(j+d\); for \(d\ge0\) it simply prepends
\(d\) zeros, and for \(d<0\) it moves the first \(\lvert d\rvert\) entries into negative
positions, which is what makes negative generator degrees harmless
(Remark \ref{rem-shift-only-gaps}). Finally
\begin{align}
  \mathrm{FF}(n;\vek d)&=\Bigl\lbrace\sum_i\shift^{d_i}(\vek s_i)\;:\;\vek s_i\in\mathcal F_n\Bigr\rbrace,
    \label{eq:def-FF}\\
  \mathrm{FF}_{\mathrm{pr}}(n;\vek d)&=\Bigl\lbrace\sum_i\shift^{d_i}(\vek s_i)\;:\;\vek s_i\in S_n\Bigr\rbrace .
    \label{eq:def-FFpr}
\end{align}

Both parameters are always written out, in \(\mathrm{FF}\) and \(\mathrm{FF}_{\mathrm{pr}}\)
alike, and the same holds for the chained-tuple sets \(\mathrm{CT}\) and
\(\mathrm{CT}_{\mathrm{pr}}\) introduced below, even where \(n\) is fixed for the whole of a
section. The semicolon is what separates \(n\) from the shift vector and is never omitted, so
\(\mathrm{FF}(3;0,0)\) is the sumset at \(n=3\) for \(\vek d=(0,0)\) and not a sumset for a
three-term \(\vek d\).
\label{def-fn}
\end{definition}
\subsection{Notation: which quantities are tuples}
\label{sec:orgea4d433}

\textbf{Bold marks a variable whose value is a tuple; a plain letter is a scalar.} The companion
article draws the same distinction, marking it with an arrow rather than with bold. It needs stating
in full here because two different kinds of index
occur, and the difference between them matters.

\begin{itemize}
\item \(\vek s\in\mathcal F_n\) is a whole \(f\)-vector; \(\vek d=(d_1,\dots,d_r)\) is the shift vector;
\(\vek u_n\) and \(\vek 1_n\) are the row and column vectors of Theorem \ref{thm-main}. Each is a tuple, and
each is set in bold.
\item \textbf{A plain subscript on a bold letter selects a member of the \(r\)-tuple}, and that member may
itself be a vector or a scalar: \(\vek s_i\) is the \(i\)-th \(f\)-vector of an \(r\)-tuple, and so
is still bold; \(d_i\) is the \(i\)-th generator degree, a scalar, and is not.
\textbf{The one exception is the letter \(n\)}, which never indexes a tuple: a subscript \(n\) always
names the member of an \(n\)-indexed family, as in \(\mathcal F_n\) and \(S_n\), and that is what
it does in \(\vek u_n\) and \(\vek 1_n\) as well. Since \(r\)-tuples are indexed by \(i\) and
\(\vek d\) is the only tuple of which \(n\) could plausibly be an index, nothing is ambiguous.
\item \textbf{A parenthesised argument on a vector-valued letter selects a coordinate}, and is
always a scalar: \(s(j)\) is the number of \(j\)-subsets counted by \(\vek s\),
\(\HF(F/M)(j)\) is a dimension, \(x(j)\) a single entry of \(\vek x\). So \(s_i(j)\) (bold
letter, tuple subscript, coordinate argument) is the \(j\)-th coordinate of the \(i\)-th
\(f\)-vector, and is a scalar. \textbf{On a capital or an operator name that is not bold a parenthesised
argument is a parameter instead}, and the result is whatever kind of object the name denotes: \(T_n(\delta)\)
is a matrix, \(\mathrm{FF}(n;\vek d)\) a set, \(F^p(n)\) and \(E^p(n,k)\) integers.
\item Matrices are capitals and are never bold (\(A_n\), \(T_n(\delta)\)); their entries are written
\(A_n[\tau][\rho]\), \(u_n[\tau]\), also scalars. \textbf{Square brackets are only ever entry selection}
(apart from \([n]=\lbrace1,\dots,n\rbrace\)), never a parameter, which is why the \(n\) of \(A_n\) is a subscript and not a third bracket.
\item \textbf{Classical notation is kept where it is standard}, although it does not follow the rules above:
the face numbers \(f_j(\Delta)\) of a complex \(\Delta\), the exterior monomials \(e_\sigma\), and
the standard basis vectors \(e_\tau\) of Proposition \ref{prop-normalform} and after, which are
not bold.
\item \textbf{The usual exception:} a subscript on a module or a graded object still means its
homogeneous component: \(\wedge^jV\), \(F_j\), \(I_j\), \(\mathrm{Mon}_j(L)\) are subspaces, not
coordinates of a vector, and follow the standard convention rather than this one.
\end{itemize}

The distinction does real work in exactly one place, Definition \ref{def-fn} against
Theorem \ref{thm-chaining}: without it, \(s_1\) would mean both ``the first coordinate of \(\vek s\)''
(where it equals \(n\) for a proper vector) and ``the first \(f\)-vector of the tuple''. Under the
convention those are \(s(1)\) and \(\vek s_1\), and cannot be confused.

The dictionary between the algebra and the combinatorics is exact:
\begin{equation}\label{eq:dictionary}
  I\;\longmapsto\;\Delta(I)=\lbrace\sigma\subseteq[n]\;:\;e_\sigma\notin I\rbrace
\end{equation}
is a bijection from the monomial ideals of \(\Ext\) onto the order ideals of \(\mathcal P([n])\), inverse
to \eqref{eq:exterior-face-ideal}; \(\HF\bigl((\Ext)/I\bigr)\) is the \(f\)-vector of \(\Delta(I)\); and both
degenerate cases are included, \(I=\Ext\) going to the void family and \(I=0\) to the full simplex.
Consequently
\(\mathrm{FF}(n;\vek d)=\lbrace\HF(F/M):M\ \text{monomial}\rbrace\) and
\(\mathrm{FF}_{\mathrm{pr}}(n;\vek d)=\lbrace\HF(F/M):M\ \text{proper monomial}\rbrace\).

\begin{remark}
Both sets depend on \(\vek d\) only through the gaps \(\delta_i=d_i-d_{i-1}\ge0\): a global shift
re-indexes \(\mathrm{FF}(n;\vek d)\) bijectively. Negative generator degrees are therefore allowed
throughout, as they are in Amata and Crupi. Sortedness of \(\vek d\) is not optional; see
Remark \ref{rem-sorted}.
\label{rem-shift-only-gaps}
\end{remark}
\section{Linusson's recursion}
\label{sec-linusson}
\begin{definition}
Linusson~\cite[\S3]{Linusson1999} Fix \(p\ge0\). For \(n\ge-1\), let \(F^p(n)\) be the number of
\(f\)-vectors of simplicial complexes on at most \(p\) vertices, no face of which has more than
\(n\) elements. In coordinates those two bounds read \(f_1\le p\) and \(f_j=0\) for \(j>n\). Both
degenerate \(f\)-vectors are counted: \((0,0,\dots)\), of the empty complex, and
\((1,0,0,\dots)\), of the complex whose only face is \(\emptyset\). For \(k\ge-1\), let
\(E^p(n,k)\) count those among them that contain every face of at most \(k\)
elements but not every face of \(k+1\) elements, that is \(f_i=\binom pi\) for \(i\le k\) but
\(f_{k+1}<\binom p{k+1}\).

We also set
\begin{equation}\label{eq:E-minus-two}
  E^p(-2,k)=0\qquad\text{for every }k .
\end{equation}
This is a convention, and it is needed because the clause \(f_j=0\) for \(j>n\) stops doing its
work at \(n=-2\). That clause constrains only the \(f_j\) with \(j\ge0\), so at \(n=-2\) it says
exactly what it says at \(n=-1\), and the empty complex still satisfies it; taken literally the
definition would therefore give \(E^p(-2,-1)=1\) rather than \(0\). Setting
\eqref{eq:E-minus-two} restores the reading the clause has for every \(n\ge-1\), which is what
Lemma \ref{lem-joint} uses it for. It is used nowhere else.
\label{def-linusson}
\end{definition}

The two parameters play different roles, and the notation \(F^p(n)\) keeps both: \(p\) bounds the
vertex set, \(n\) bounds the size of a face. They coincide at \(p=n\), which is the case this paper
runs on, and there \(F^n(n)=\lvert\mathcal F_n\rvert\). \textbf{Both letters are Linusson's own}, kept so
that this definition and Theorem \ref{thm-linusson} can be read against his paper as printed. The
cost is that the \(n\) of this one definition is not the \(n\) of the rest of this paper, which
fixes \(\dim V=n\) throughout; wherever the two would collide, the face bound is written \(m\) or
\(\tau\) instead, as in Lemma \ref{lem-joint}.

\begin{theorem}
Linusson~\cite[Theorem 3.2]{Linusson1999} For \(0\le k\) and \(1\le n\le p\),
\begin{equation}\label{eq:auto-8}
  F^p(n)=\begin{cases}
    1+\sum_{i=0}^{n}E^{p-1}(n,i)\,F^{p-1}(i-1), & n<p,\\
    1+F^p(p-1), & n=p,
  \end{cases}
\end{equation}
\begin{equation}\label{eq:auto-9}
  E^p(n,k)=\begin{cases}
    \sum_{i=k}^{n}E^{p-1}(n,i)\,E^{p-1}(i-1,k-1), & k\le n<p,\\
    E^p(p-1,k), & k<n=p,\\
    1, & k=n=p,
  \end{cases}
\end{equation}
with \(F^p(-1)=1\), \(F^p(0)=2\), \(F^0(n)=2\), \(E^p(n,-1)=1\), \(E^p(n,n)=1\), and
\(E^p(n,k)=0\) for \(k>n\). Moreover \(F^p(n)=\sum_kE^p(n,k)\).
\label{thm-linusson}
\end{theorem}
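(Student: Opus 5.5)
This is Linusson's Theorem 3.2, and I would reprove it by his method: compression. By Kruskal--Katona (equivalently, the Clements--Lindström theorem for the Boolean lattice), every \(f\)-vector counted by \(F^p(n)\) is the \(f\)-vector of a unique \emph{compressed} complex. That is the complex whose \(j\)-faces are the first \(f_j\) \(j\)-subsets of \([p]\) in reverse lexicographic order, in which the subsets avoiding \(p\) come first. So \(F^p(n)\) and \(E^p(n,k)\) count compressed complexes on \([p]\) with face sizes at most \(n\), refined by fullness level. The first step is therefore to restate both definitions as counts of compressed order ideals.

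Next I split a nonvoid compressed \(\Delta\) on \([p]\) at the last vertex, writing \(\Delta=\Delta_0\cup\{\sigma\cup\{p\}:\sigma\in L\}\). Here \(\Delta_0\) is the part avoiding \(p\) and \(L=\operatorname{lk}(p)\). Both pieces are compressed on \([p-1]\), and \(L\subseteq\Delta_0\). Reverse-lex order places every \(j\)-set avoiding \(p\) before any \(j\)-set containing \(p\). Hence \(\Delta\) has a \(j\)-face through \(p\) only if \(\Delta_0\) contains all \(j\)-subsets of \([p-1]\). So if \(\Delta_0\) is full exactly up to level \(i\), that is, it is counted by \(E^{p-1}(n,i)\), then \(L\) is an arbitrary compressed complex with faces of size at most \(i-1\). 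I also expect \(L\subseteq\Delta_0\) to be automatic, because \(\Delta_0\) is full in those sizes. Conversely, every such pair glues to a compressed order ideal; closure under subsets needs a short check on the boundary sizes.

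These steps give \(F^p(n)=1+\sum_i E^{p-1}(n,i)F^{p-1}(i-1)\) for \(n<p\), where the \(1\) accounts for the void complex. For \(E^p(n,k)\), fullness of \(\Delta\) up to level \(k\) means \(\Delta_0\) is full to level at least \(k\) and \(L\) is full up to level \(k-1\) but not \(k\). This yields the factor \(E^{p-1}(i-1,k-1)\) summed over \(i\ge k\). The case \(n=p\) is immediate: the only face of size \(p\) is \([p]\), which forces the full simplex. That gives \(1+F^p(p-1)\) and \(E^p(p,p)=1\). For \(k<p\) the full simplex is not counted, so \(E^p(p,k)=E^p(p-1,k)\). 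The initial values are direct checks, and \(F=\sum_k E\) holds because \(k\) is determined by \(f\).

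The main obstacle is the bijectivity of the gluing. I must show that \(\Delta\) compressed is equivalent to \(\Delta_0\) and \(L\) compressed with \(L\) living below the fullness level of \(\Delta_0\), with no off-by-one slip at the boundary level \(i\). I would first verify this directly from the reverse-lex order on \(j\)-sets, treating sets containing \(p\) as \(\sigma\cup\{p\}\) with \(\sigma\) in reverse-lex order on \([p-1]\). I would then check that shadows behave as claimed at sizes \(i\) and \(i+1\).
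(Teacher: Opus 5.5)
Your argument is correct. It is a genuine proof, whereas the paper offers none: it cites Linusson's Theorem 3.2 and only certifies the transcription, entry by entry against the \(199\) printed values of Tables 5 and 6 of his preprint and against brute-force enumeration of order ideals for \(p\le5\).

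Your derivation is exactly the method the paper's introduction attributes to Linusson. You compress by Kruskal--Katona, split a compressed complex on \([p]\) into the deletion \(\Delta_0\) and the link \(L\) of the last vertex, and index by the fullness level \(i\) of \(\Delta_0\). With \(f_j(\Delta)=f_j(\Delta_0)+f_{j-1}(L)\) and Pascal's rule, \(\Delta\) has level exactly \(k\) if and only if \(\Delta_0\) has level \(i\ge k\) and \(L\) has level exactly \(k-1\), which is the \(E\)-recursion.

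The two routes buy different things. The paper's finite check guards against what a citation actually risks, namely index ranges and boundary values. Your proof covers all \(p,n\) at once. However, two boundary points should be made explicit rather than called immediate.

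First, \(E^p(p,p)=1\) is a convention, not a consequence. Read literally, the clause \(f_{p+1}<\binom p{p+1}=0\) is false, as the paper's Remark~\ref{rem-transcription} notes, and the convention is forced if \(F^p(p)=\sum_kE^p(p,k)\) is to hold.

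Second, your inference that \(L\) has no faces of size \(i\) uses the fact that \(\Delta_0\) is not full at size \(i+1\). That hypothesis is vacuous when \(\Delta_0\) is the full simplex on \([p-1]\). This is possible only for \(n=p-1\), with \(i=p-1\) by the same convention. In that case it is the face-size bound \(n\), not fullness, that excludes \([p-1]\) from \(L\). It yields the same bound \(i-1=n-1\), so the term \(E^{p-1}(n,n)\,F^{p-1}(n-1)\) is still correct. The case already occurs at \(p=2\), \(n=1\).
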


\begin{remark}
The transcription above was checked entry by entry against the appendix of Linusson's preprint
\autocite{Linusson1999preprint}: all \(100\)
printed entries of the preprint's Table 5, of \(F^p(n)\), and all \(99\) of its Table 6, of
\(E^p(p,k)\), are
reproduced exactly, and an
independent brute-force enumeration of order ideals agrees for every \(F^p(n)\) and
\(E^p(n,k)\) with \(p\le5\). \textbf{The appendix is in the preprint only}: the published version is
shorter, carries no appendix, and prints a single table, of \(M^p(n)\). Linusson's tables are
therefore always named as his below, since this paper has tables of its own and they are numbered
in the same series; every numbered \textbf{result} cited is the published one. One wrinkle: at \(k=n=p\) the clause
``\(f_{k+1}<\binom p{k+1}\)'' read literally is vacuously false, since \(\binom p{p+1}=0\); but
Linusson's stated boundary value \(E^p(n,n)=1\) and his Table 6 diagonal both require the clause to
be dropped there. We follow Linusson.
\label{rem-transcription}
\end{remark}

We use two consequences repeatedly. Setting \(p=n\) gives \(F^n(n)=|\mathcal F_n|\), and the
proper count is a difference of two such.

\begin{remark}
\(|S_n|=F^n(n)-F^{n-1}(n-1)\), giving
\begin{equation}\label{eq:auto-10}
  \lvert S_n\rvert = 1,\,2,\,5,\,16,\,70,\,457,\,4908,\,95248,\,3617645,\,286007155
  \qquad(n=1,\dots,10),
\end{equation}
whose first five terms agree with direct enumeration. The sequence \(F^p(p)\) is
OEIS A007695 \autocite{OEIS}. There is also a second identity,
\begin{equation}\label{eq:Sn-as-Fnn2}
  \lvert S_n\rvert=F^n(n-2),
\end{equation}
for instance \(|S_9|=F^9(7)=3617645\), a value printed in Linusson's Table 5. That identity is
Proposition \ref{prop-fnn2} below, proved by Alexander duality; equivalently, via
\(F^n(n)=1+F^n(n-1)\), it says
\begin{equation}\label{eq:Fnn2-equivalent}
  F^n(n-1)-F^n(n-2)=F^{n-1}(n-1)-1 .
\end{equation}
Nothing later depends on it.
\label{obs-fnn2}
\end{remark}
\section{Two statistics, and the joint distribution}
\label{sec-statistics}
\begin{definition}
For \(\vek s\in\mathcal F_n\) put
\begin{align}
  \indeg(\vek s) &= \min\lbrace j:s(j)<\tbinom nj\rbrace,\label{eq:def-indeg}\\
  \tp(\vek s) &= \max\lbrace j:s(j)>0\rbrace,\label{eq:def-top}
\end{align}
with \(\indeg(\vek s)=n+1\) for the full simplex and \(\tp(\vek s)=-1\) for the void family. The
symbol \(\indeg\) is the one of \eqref{eq:indeg-ideal}: this is its combinatorial shadow, and
Lemma \ref{lem-dictionary}(2) is the statement that the two agree.
\label{def-stats}
\end{definition}

\begin{definition}
(\emph{The joint-distribution matrix.}) Let \(A_n\) be the matrix whose entry
\begin{equation}\label{eq:def-A}
  A_n[\tau][\rho]=\#\bigl\lbrace \vek s\in\mathcal F_n:\tp(\vek s)=\tau,\ \indeg(\vek s)=\rho\bigr\rbrace
\end{equation}
counts the elements of \(\mathcal F_n\) with the prescribed pair of statistics. Rows are indexed by
\(\tp\) and columns by \(\indeg\), over
\begin{equation}\label{eq:def-A-range}
  \tau=-1,0,1,\dots,n,\qquad \rho=0,1,\dots,n+1 ,
\end{equation}
so \(A_n\) is \((n+2)\times(n+2)\). Two features of the range are forced by the conventions of
Definition \ref{def-stats} and are used repeatedly below: \(A_n\) is supported on \(\rho\le\tau+1\), and
the two extreme rows are the single entries \(A_n[-1][0]=1\) (the void family) and \(A_n[n][n+1]=1\)
(the full simplex). Every row sums to the number of \(\vek s\) with that value of \(\tp\), and the
grand total is \(\lvert\mathcal F_n\rvert\).
\label{def-A}
\end{definition}

Note \(\indeg(\vek s)\le\tp(\vek s)+1\) always. For \(\tp(\vek s)<n\) this is because
\begin{equation}\label{eq:indeg-le-top}
  s\bigl(\tp(\vek s)+1\bigr)=0<\tbinom n{\tp(\vek s)+1},
\end{equation}
so the minimum defining \(\indeg\) is attained no later
than \(\tp(\vek s)+1\); and for \(\tp(\vek s)=n\) it holds because \(\indeg(\vek s)\le n+1\) outright, by
the convention of Definition \ref{def-stats}. The bound is therefore unconditional, the binomial
coefficient \(\binom n{\tp(\vek s)+1}\) vanishing in exactly the case the second clause covers. So
\(A_n\) is supported on \(\rho\le\tau+1\).

\begin{lemma}
With the convention \eqref{eq:E-minus-two} of Definition \ref{def-linusson},
\begin{equation}\label{eq:auto-12}
  A_n[\tau][\rho]\;=\;E^n(\tau,\rho-1)-E^n(\tau-1,\rho-1).
\end{equation}
\label{lem-joint}
\end{lemma}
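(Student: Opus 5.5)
The plan is to read both sides of \eqref{eq:auto-12} as counts of the same set of \(f\)-vectors, and to recognise the difference as a complementary count. Fix \(p=n\) in Definition \ref{def-linusson}. For \(m\ge-1\) and \(k\ge-1\) I will show that
\[
  E^n(m,k)\;=\;\#\bigl\lbrace\vek s\in\mathcal F_n:\ \tp(\vek s)\le m,\ \indeg(\vek s)=k+1\bigr\rbrace .
\]
Given this identity, the lemma follows in one line. Taking \(m=\tau\) and \(m=\tau-1\) with \(k=\rho-1\) and subtracting, the sets are nested, so the difference counts exactly the \(\vek s\) with \(\tp(\vek s)=\tau\) and \(\indeg(\vek s)=\rho\). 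That number is \(A_n[\tau][\rho]\) by \eqref{eq:def-A}.

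Proving the displayed identity has three parts.

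\textbf{The ambient sets agree.} The \(f\)-vectors of simplicial complexes on at most \(n\) vertices are exactly the elements of \(\mathcal F_n\). Any complex on at most \(n\) vertices can be relabelled into \([n]\), which makes it an order ideal of \(\mathcal P([n])\). Conversely, every order ideal of \(\mathcal P([n])\) is such a complex. Both degenerate families occur on both sides: the void family, with \(f\)-vector \((0,0,\dots)\), and \(\lbrace\emptyset\rbrace\).

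\textbf{The two constraints translate.} Linusson's face-size clause \(f_j=0\) for \(j>m\) says precisely \(\tp(\vek s)\le m\), for every \(m\ge-1\), with \(\tp=-1\) for the void family. His level clause, \(f_i=\binom ni\) for \(i\le k\) together with \(f_{k+1}<\binom n{k+1}\), is verbatim the statement \(\indeg(\vek s)=k+1\) in the sense of \eqref{eq:def-indeg}.

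\textbf{The boundary cases.} These are the only places where care is needed, and I expect them to be the main (if modest) obstacle. I will check each against the conventions.

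At \(\rho=0\), i.e.\ \(k=-1\), the level condition reads \(f_0<1\). This forces the void family, so both sides equal \(1\) exactly when \(\tau=-1\). This matches \(A_n[-1][0]=1\) and Linusson's boundary value \(E^n(m,-1)=1\).

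At \(\tau=-1\), the subtracted term is \(E^n(-2,\rho-1)\), which is \(0\) by the convention \eqref{eq:E-minus-two}. This is exactly the reading under which \(\tp\le-2\) is an empty condition, so the identity still holds there.

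At \(\rho=n+1\), i.e.\ \(k=n\), the literal clause \(f_{n+1}<\binom n{n+1}=0\) fails. Here I use the reading fixed in Remark \ref{rem-transcription}, namely Linusson's \(E^n(n,n)=1\), which counts the full simplex. This agrees with the convention \(\indeg=n+1\) for the full simplex in Definition \ref{def-stats}. The subtracted term \(E^n(n-1,n)\) vanishes because \(k>n-1\), which is correct, since only the full simplex has \(\indeg=n+1\) and its top is \(n\).

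Finally, for entries with \(\rho>\tau+1\), both \(E\)-values vanish, because \(k=\rho-1>\tau\ge\tau-1\). This agrees with the support bound \eqref{eq:indeg-le-top}.
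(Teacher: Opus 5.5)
Your proposal is correct and follows the paper's own proof essentially verbatim. At \(p=n\) you identify \(E^n(m,k)\) with \(\#\lbrace\vek s\in\mathcal F_n:\tp(\vek s)\le m,\ \indeg(\vek s)=k+1\rbrace\), using the boundary reading of Remark \ref{rem-transcription} at \(k=n\) and the convention \eqref{eq:E-minus-two} at \(\tau=-1\), and then subtract the nested counts; your extra checks at \(\rho=0\) and for \(\rho>\tau+1\) agree with Linusson's stated boundary values but already follow from that general identity.
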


\begin{proof}
Work at \(p=n\). Every \(\vek s\in\mathcal F_n\) has \(f_1\le n\); conversely every \(f\)-vector counted by
\(E^n(m,k)\) has \(f_1\le n\), hence belongs to a complex on at most \(n\) vertices, which embeds in
\([n]\). So the objects counted by \(E^n(m,k)\) are exactly certain elements of \(\mathcal F_n\).
Under that identification Linusson's clause ``\(f_j=0\) for \(j>m\)'' says precisely \(\tp(\vek s)\le m\),
and his clause ``\(f_i=\binom ni\) for \(i\le k\) but \(f_{k+1}<\binom n{k+1}\)'' (with the boundary
convention of Remark \ref{rem-transcription}) says precisely \(\indeg(\vek s)=k+1\). Hence
\[
  E^n(m,k)=\#\lbrace \vek s\in\mathcal F_n:\tp(\vek s)\le m,\ \indeg(\vek s)=k+1\rbrace ,
\]
and subtracting the count at \(m=\tau-1\) from the count at \(m=\tau\) leaves \(\tp(\vek s)=\tau\)
exactly. For \(\tau=-1\) the subtrahend would count \(f\)-vectors with \(\tp\le-2\), of which there
are none (even the void \(f\)-vector has \(\tp=-1\)), which is the stated convention.
\end{proof}

So one run of Theorem \ref{thm-linusson} at \(p=n\) delivers the entire joint distribution. Its
\(\indeg\) marginal telescopes to \(E^n(n,\rho-1)\), which is Linusson's Table 6.

\begin{mexample}
At \(n=3\) the set \(\mathcal F_3\) has ten elements. Listing them with their statistics, first the
\(f\)-vector and then \((\tp,\indeg)\), gives
\[
\begin{array}{llll}
 (0,0,0,0)\colon(-1,0) & (1,0,0,0)\colon(0,1) & (1,1,0,0)\colon(1,1) & (1,2,0,0)\colon(1,1)\\
 (1,2,1,0)\colon(2,1)  & (1,3,0,0)\colon(1,2) & (1,3,1,0)\colon(2,2) & (1,3,2,0)\colon(2,2)\\
 (1,3,3,0)\colon(2,3)  & (1,3,3,1)\colon(3,4) & &
\end{array}
\]
and collecting them into \eqref{eq:def-A} (rows \(\tau=-1,\dots,3\) downward, columns
\(\rho=0,\dots,4\) rightward)
\[
  A_3=\begin{pmatrix}
    1&0&0&0&0\\ 0&1&0&0&0\\ 0&2&1&0&0\\ 0&1&2&1&0\\ 0&0&0&0&1
  \end{pmatrix},
  \qquad \text{row sums } (1,1,3,4,1),\ \text{total }10=\lvert\mathcal F_3\rvert .
\]
The support condition \(\rho\le\tau+1\) is the vanishing above the superdiagonal; the two corner
entries are the void family and the full simplex; and the antidiagonal symmetry of
Corollary \ref{cor-persymmetry} is already visible, the \(2\) at \((\tau,\rho)=(1,1)\) matching the
\(2\) at \((2,2)\).

The listing also locates the two degenerate \(f\)-vectors that Definition \ref{def-linusson} counts
along with the rest, since it is easy to guess the wrong pair. They are the first two entries:
\((0,0,0,0)\), the \(f\)-vector of the void family, which has no faces at all; and \((1,0,0,0)\), the
\(f\)-vector of \(\lbrace\emptyset\rbrace\), the complex whose only face is the empty one. They are
the two with \(\tp\le0\), they occupy the two extreme rows of the top-left corner of \(A_3\), and they
are exactly the two members of \(\mathcal F_3\) that no non-empty complex on a non-empty vertex set
produces. The full simplex \((1,3,3,1)\) is \textbf{not} one of them: it is a perfectly ordinary complex, and
it sits at the opposite corner of \(A_3\) as the unique element with \(\tp=3\).
\label{ex-A}
\end{mexample}

\begin{remark}
Lemma \ref{lem-joint} was also checked entrywise, for \(n\le6\) and with no mismatches, against
direct enumeration of the order ideals themselves. At \(n=6\), with rows \(\tau=-1,\dots,6\) and
columns \(\rho=0,\dots,7\), it is
\begin{equation}\label{eq:auto-13}
\begin{pmatrix}
1&0&0&0&0&0&0&0\\
0&1&0&0&0&0&0&0\\
0&5&1&0&0&0&0&0\\
0&20&14&1&0&0&0&0\\
0&43&84&19&1&0&0&0\\
0&25&143&84&14&1&0&0\\
0&1&25&43&20&5&1&0\\
0&0&0&0&0&0&0&1
\end{pmatrix}
\end{equation}
\label{rem-A-verified}
\end{remark}
\subsection{Alexander duality, and the identity of Remark \ref{obs-fnn2}}
\label{sec:org819984b}

The matrix \eqref{eq:auto-13} is visibly symmetric about its antidiagonal. Alexander duality
explains the symmetry, and proves the identity left open in Section \ref{sec-linusson} as well.

\begin{definition}
(\emph{The Alexander dual, the binomial box, and the duality map.}) For an order ideal \(\Delta\) of
\(\mathcal P([n])\), the \textbf{Alexander dual} is
\begin{equation}\label{eq:def-alexander-dual}
  \Delta^{\vee}=\bigl\lbrace\sigma\subseteq[n]\;:\;[n]\setminus\sigma\notin\Delta\bigr\rbrace .
\end{equation}
The \textbf{binomial box} is the set of integer vectors obeying the entrywise bound,
\begin{equation}\label{eq:binomial-box}
  \mathcal B_n=\bigl\lbrace\vek s\in\mathbb Z_{\ge0}^{n+1}\;:\;s(j)\le\tbinom nj\ \text{ for all }j\bigr\rbrace
  \;\supseteq\;\mathcal F_n ,
\end{equation}
and the \textbf{duality map} \(D_n\) is the self-map of \(\mathcal B_n\) given coordinatewise by
\begin{equation}\label{eq:def-D}
  D_n(\vek s)(j)=\tbinom nj-s(n-j) \qquad (0\le j\le n).
\end{equation}
Per the convention of Section \ref{sec-notation}, the parenthesised argument in
\eqref{eq:def-D} selects a coordinate, so \(s(m)\) and \(D_n(\vek s)(j)\) are integers while
\(\vek s\) and \(D_n(\vek s)\) are vectors. The subscript on \(D_n\) and \(\mathcal B_n\) is the
ground-set size, as in \(\mathcal F_n\) and \(S_n\); \(\Delta^{\vee}\) carries no subscript because the
complement in \eqref{eq:def-alexander-dual} is taken inside the \([n]\) that \(\Delta\) already
names.
\label{def-alexander}
\end{definition}

\begin{lemma}
(\emph{Alexander duality on \(f\)-vectors.}) Let \(\Delta\) be an order ideal of \(\mathcal P([n])\),
and let \(\Delta^{\vee}\), \(\mathcal B_n\) and \(D_n\) be as in Definition \ref{def-alexander}.
Then \(\Delta^{\vee}\) is an order ideal, \((\Delta^{\vee})^{\vee}=\Delta\), and
\begin{equation}\label{eq:alexander-f}
  f_j(\Delta^{\vee})=\tbinom nj-f_{n-j}(\Delta).
\end{equation}
Consequently \(D_n\) restricts to an involution of \(\mathcal F_n\), and it conjugates the two
statistics into each other:
\begin{align}
  \indeg(D_n(\vek s)) &= n-\tp(\vek s),\label{eq:alexander-indeg}\\
  \tp(D_n(\vek s)) &= n-\indeg(\vek s).\label{eq:alexander-top}
\end{align}
Of these assertions, only one uses the combinatorics: that \(\Delta^{\vee}\) is an order ideal,
equivalently that \(D_n\) maps \(\mathcal F_n\) into itself. On the box \(\mathcal B_n\) itself
\(D_n\) is already an involution and already satisfies
\eqref{eq:alexander-indeg}--\eqref{eq:alexander-top}. The involution identity is formal and holds
for every integer vector; the entrywise bound of \eqref{eq:binomial-box} is what makes \(D_n\) a
self-map of \(\mathcal B_n\), and it is what the two conjugations need, since \(\tp\) asks whether a
coordinate is non-zero and \eqref{eq:D-above} only says whether it is positive.
\label{lem-alexander}
\end{lemma}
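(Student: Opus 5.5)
The plan is to prove the set-level facts about \(\Delta^{\vee}\) first, then read off \eqref{eq:alexander-f}, and finally treat the statements about \(D_n\) on \(\mathcal B_n\) as pure coordinate bookkeeping.

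\textbf{Order ideal and involution.} To see that \(\Delta^{\vee}\) is an order ideal, take \(\sigma\in\Delta^{\vee}\) and \(\tau\subseteq\sigma\). Then \([n]\setminus\tau\supseteq[n]\setminus\sigma\). If \([n]\setminus\tau\) were in \(\Delta\), downward closure of \(\Delta\) would put \([n]\setminus\sigma\) in \(\Delta\), a contradiction; hence \(\tau\in\Delta^{\vee}\). The involution is a double negation: \(\sigma\in(\Delta^{\vee})^{\vee}\) iff \([n]\setminus\sigma\notin\Delta^{\vee}\) iff \(\sigma\in\Delta\).

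\textbf{The \(f\)-vector formula.} Complementation is a bijection from \(j\)-subsets to \((n-j)\)-subsets. Under it, the \(j\)-faces of \(\Delta^{\vee}\) correspond exactly to the \((n-j)\)-subsets not in \(\Delta\). This gives \(f_j(\Delta^{\vee})=\binom nj-f_{n-j}(\Delta)\), using \(\binom nj=\binom n{n-j}\). So \(D_n\) applied to the \(f\)-vector of \(\Delta\) is the \(f\)-vector of \(\Delta^{\vee}\), and therefore \(D_n\) maps \(\mathcal F_n\) into itself.

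\textbf{Formal facts on \(\mathcal B_n\).} Applying \(D_n\) twice gives \(\binom nj-\binom n{n-j}+s(j)=s(j)\) by binomial symmetry; this is valid for any integer vector. The box bound \(0\le s(n-j)\le\binom n{n-j}\) gives \(0\le D_n(\vek s)(j)\le\binom nj\), so \(D_n\) maps \(\mathcal B_n\) to itself. Combined with the involution, \(D_n\) restricts to an involution of \(\mathcal F_n\).

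\textbf{The conjugations.} The key identity, presumably \eqref{eq:D-above}, is that on \(\mathcal B_n\)
\[
  D_n(\vek s)(j)<\tbinom nj \iff s(n-j)>0 \iff s(n-j)\ne0 .
\]
The last equivalence uses \(s\ge0\), which is exactly where the box enters. It follows that \(\indeg(D_n\vek s)=\min\lbrace j: s(n-j)\ne0\rbrace=n-\max\lbrace m:s(m)\ne0\rbrace=n-\tp(\vek s)\).

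\textbf{Main obstacle: the degenerate conventions.} The real care goes into the boundary cases. If \(\vek s=\vek 0\), then \(\tp=-1\) and \(D_n\vek s\) is the full vector, whose \(\indeg\) is \(n+1=n-(-1)\) by convention, so the formula holds. Formula \eqref{eq:alexander-top} then follows by applying \eqref{eq:alexander-indeg} to \(D_n\vek s\) and using the involution. Checking the full simplex against the void family covers the other corner.
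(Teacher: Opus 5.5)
Your proof is correct and follows the paper's argument essentially step for step: the same complementation argument for the order-ideal property, the same double negation for $(\Delta^{\vee})^{\vee}=\Delta$ and the same counting for the $f$-vector formula, and the same bookkeeping on $\mathcal B_n$ for the involution and the conjugations. The differences are cosmetic. The equivalence you use is the paper's \eqref{eq:D-below}, not \eqref{eq:D-above}. You also get \eqref{eq:alexander-top} by applying \eqref{eq:alexander-indeg} to $D_n(\vek s)$ and invoking the involution, whereas the paper reads it off \eqref{eq:D-above} directly; in both routes the box bound enters in the same way, as the non-negativity of $D_n(\vek s)$.
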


\begin{proof}
If \(\sigma\in\Delta^{\vee}\) and \(\tau\subseteq\sigma\) then
\[
  [n]\setminus\sigma\;\subseteq\;[n]\setminus\tau ;
\]
were \([n]\setminus\tau\in\Delta\), downward closure would put
\([n]\setminus\sigma\in\Delta\), which it is not. So \(\Delta^{\vee}\) is an order ideal. Double
complementation gives \((\Delta^{\vee})^{\vee}=\Delta\). For \eqref{eq:alexander-f}, complementation
\(\sigma\mapsto[n]\setminus\sigma\) is a bijection from the \(j\)-subsets to the \((n-j)\)-subsets,
under which the \(j\)-members of \(\Delta^{\vee}\) correspond to the \((n-j)\)-subsets \textbf{not} in
\(\Delta\). The formula shows \(D_n(\vek s)\) depends only on \(\vek s\), so \(D_n\) is a well-defined
involution of \(\mathcal F_n\). Finally \(D_n(\vek s)(j)<\binom nj\) if and only if \(s(n-j)>0\), which
is \eqref{eq:alexander-indeg}--\eqref{eq:alexander-top}; the conventions \(\tp=-1\) for the void family and
\(\indeg=n+1\) for the full simplex are exactly what make the two extremes correspond.

For the last paragraph of the statement, observe that this final step used no property of \(\vek s\)
beyond \(0\le s(n-j)\le\binom n{n-j}\), which makes every coordinate of \(D_n(\vek s)\)
non-negative, so that ``non-zero'' and ``positive'' agree. Both equivalences read off the definition
of \(D_n\),
\begin{align}
  D_n(\vek s)(j)<\tbinom nj \;&\iff\; s(n-j)>0 ,\label{eq:D-below}\\
  D_n(\vek s)(j)>0 \;&\iff\; s(n-j)<\tbinom n{n-j} ,\label{eq:D-above}
\end{align}
and, since \(\binom nj=\binom n{n-j}\), with no bound needed at all,
\begin{equation}\label{eq:D-involution}
  D_n(D_n(\vek s))(j)=\tbinom nj-\Bigl(\tbinom n{n-j}-s(j)\Bigr)=s(j) ,
\end{equation}
so \(D_n\), which maps \(\mathcal B_n\) into itself by the bound, is an involution of
\(\mathcal B_n\). What \eqref{eq:alexander-f} adds is that this involution of the box restricts to one of the
subset \(\mathcal F_n\), and that is where downward closure is used.
\end{proof}

\begin{mexample}
(\emph{The involution on \(\mathcal F_3\).}) Continuing Example \ref{ex-A}. With \(n=3\) the formula
\(D_3(\vek s)(j)=\binom 3j-s(3-j)\) reads \(D_3(\vek s)=\bigl(1-s(3),\,3-s(2),\,3-s(1),\,1-s(0)\bigr)\):
reverse the vector and subtract it from Pascal's row \((1,3,3,1)\). Applied to the ten elements listed
in Example \ref{ex-A}, with \((\tp,\indeg)\) beside each:
\begingroup\small
\begin{center}
\begin{tabular}{@{}llcll@{}}
  \toprule
  \(\vek s\) & \((\tp,\indeg)\) & & \(D_3(\vek s)\) & \((\tp,\indeg)\)\\
  \midrule
  \((0,0,0,0)\) & \((-1,0)\) & \(\longleftrightarrow\) & \((1,3,3,1)\) & \((3,4)\)\\
  \((1,0,0,0)\) & \((0,1)\)  & \(\longleftrightarrow\) & \((1,3,3,0)\) & \((2,3)\)\\
  \((1,1,0,0)\) & \((1,1)\)  & \(\longleftrightarrow\) & \((1,3,2,0)\) & \((2,2)\)\\
  \((1,2,0,0)\) & \((1,1)\)  & \(\longleftrightarrow\) & \((1,3,1,0)\) & \((2,2)\)\\
  \addlinespace
  \((1,2,1,0)\) & \((2,1)\)  & \(=\)                   & \((1,2,1,0)\) & \((2,1)\)\\
  \((1,3,0,0)\) & \((1,2)\)  & \(=\)                   & \((1,3,0,0)\) & \((1,2)\)\\
  \bottomrule
\end{tabular}
\captionof{table}{The duality \(D_3\) on \(\mathcal F_3\), each \(f\)-vector with its \((\tp,\indeg)\).}
\label{tab-duality-n3}
\end{center}
\endgroup
Four two-cycles and two fixed points, accounting for all ten elements. Every row satisfies
\eqref{eq:alexander-indeg}--\eqref{eq:alexander-top}: in the first, \(\indeg(D_3(\vek s))=4=3-(-1)=n-\tp(\vek s)\) and
\(\tp(D_3(\vek s))=3=3-0=n-\indeg(\vek s)\), and so on down. The two fixed points are the
self-dual complexes, and note that a fixed point must have \(\tp+\indeg=n\), which is why there are
any: the antidiagonal \(\tau+\rho=3\) of \(A_3\) is where the persymmetry of
Corollary \ref{cor-persymmetry} has its fixed entries.

The first and last rows exchange the two extremes, the void family against the full simplex, and the
second row exchanges \(\lbrace\emptyset\rbrace\) against the boundary of the simplex. Both pairs are
cases where leaving a degenerate \(f\)-vector out of \(\mathcal F_n\) would break the involution, which is
the point made after Proposition \ref{prop-fnn2} below.

Finally, reading the table for the five proper vectors, \(D_3\) carries
\[
  S_3=\bigl\lbrace(1,3,0,0),(1,3,1,0),(1,3,2,0),(1,3,3,0),(1,3,3,1)\bigr\rbrace
\]
onto \(\lbrace(1,3,0,0),(1,2,0,0),(1,1,0,0),(1,0,0,0),(0,0,0,0)\rbrace\), which is exactly
\(\lbrace\vek g\in\mathcal F_3:\tp(\vek g)\le1\rbrace\) with \(1=n-2\). So \(|S_3|=F^3(1)=5\), which
is Proposition \ref{prop-fnn2} at \(n=3\).
\label{ex-alexander-f3}
\end{mexample}

\begin{corollary}
\(A_n[\tau][\rho]=A_n[n-\rho][n-\tau]\) for all \(\tau,\rho\): the matrix \(A_n\) is \textbf{persymmetric}, which
is to say that it is symmetric about its antidiagonal (the diagonal running from the top-right entry
to the bottom-left one) rather than about the main diagonal. With
\(A_n\) read as in \eqref{eq:def-A} for arbitrary integers \(\tau,\rho\) (so that it vanishes
outside the range \eqref{eq:def-A-range}), the identity holds on all of \(\mathbb Z^2\), and no
check that the reflected indices stay in range is needed when it is applied inside a sum.
\label{cor-persymmetry}
\end{corollary}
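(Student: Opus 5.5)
The plan is to read the identity off Lemma \ref{lem-alexander} by transporting the counting set defining \(A_n[\tau][\rho]\) along the involution \(D_n\). Fix integers \(\tau,\rho\) and put
\[
  X_{\tau,\rho}=\lbrace\vek s\in\mathcal F_n:\tp(\vek s)=\tau,\ \indeg(\vek s)=\rho\rbrace ,
\]
so that \(A_n[\tau][\rho]=\lvert X_{\tau,\rho}\rvert\) by \eqref{eq:def-A}. By Lemma \ref{lem-alexander}, \(D_n\) restricts to an involution of \(\mathcal F_n\), hence a bijection of \(\mathcal F_n\) onto itself.

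By \eqref{eq:alexander-indeg}--\eqref{eq:alexander-top}, every \(\vek s\in X_{\tau,\rho}\) satisfies
\[
  \tp(D_n(\vek s))=n-\rho \quad\text{and}\quad \indeg(D_n(\vek s))=n-\tau ,
\]
so \(D_n\) maps \(X_{\tau,\rho}\) into \(X_{n-\rho,\,n-\tau}\). Applying the same statement to the index pair \((n-\rho,n-\tau)\) shows that \(D_n\) maps \(X_{n-\rho,n-\tau}\) back into \(X_{\tau,\rho}\). Since \(D_n\circ D_n\) is the identity, these two restrictions are mutually inverse, and the two sets have the same cardinality. This gives \(A_n[\tau][\rho]=A_n[n-\rho][n-\tau]\).

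The only point needing care is the clause about all of \(\mathbb Z^2\), and I expect this to be the one place where something could go wrong, though it is mild. The argument above never uses that \((\tau,\rho)\) lies in the range \eqref{eq:def-A-range}. If it does not, then \(X_{\tau,\rho}\) is empty, because the statistics only take values \(\tp\in\{-1,\dots,n\}\) and \(\indeg\in\{0,\dots,n+1\}\). The bijection then forces \(X_{n-\rho,n-\tau}\) to be empty as well, so both sides vanish.

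It is worth observing the index correspondence directly: the map \((\tau,\rho)\mapsto(n-\rho,n-\tau)\) sends \(\tau\in\{-1,\dots,n\}\) to a second index in \(\{0,\dots,n+1\}\), and \(\rho\in\{0,\dots,n+1\}\) to a first index in \(\{-1,\dots,n\}\). So the range is preserved, and no separate check of the reflected indices is needed. The conventions \(\tp=-1\) for the void family and \(\indeg=n+1\) for the full simplex, which Lemma \ref{lem-alexander} already handles, are what make the corner entries \(A_n[-1][0]\) and \(A_n[n][n+1]\) correspond.
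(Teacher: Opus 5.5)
Your proof is correct and is essentially the paper's: it also transports the \((\tp,\indeg)\) classes along the involution \(D_n\) using \eqref{eq:alexander-indeg}--\eqref{eq:alexander-top}, and it obtains the unrestricted form by showing that both sides vanish together outside the range \eqref{eq:def-A-range}. Your observation that the bijection argument never uses the range, so emptiness transfers automatically, is a slightly more direct way to the \(\mathbb Z^2\) clause than the paper's appeal to the reflection \eqref{eq:persymmetry-reflection} mapping the range onto itself, but the two are equivalent.
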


\begin{proof}
By \eqref{eq:alexander-indeg}--\eqref{eq:alexander-top}, \(D_n\) is a bijection of \(\mathcal F_n\) carrying the class
\((\tp,\indeg)=(\tau,\rho)\) onto the class \((n-\rho,n-\tau)\). For the unrestricted form, note that
\eqref{eq:def-A} counts an empty set whenever \(\tau\) or \(\rho\) leaves the range
\eqref{eq:def-A-range}, and that the reflection
\begin{equation}\label{eq:persymmetry-reflection}
  (\tau,\rho)\;\longmapsto\;(n-\rho,\,n-\tau)
\end{equation}
maps that range onto itself, so both sides vanish together outside it.
\end{proof}

\begin{proposition}
\(\lvert S_n\rvert=F^n(n-2)\) for every \(n\ge1\).
\label{prop-fnn2}
\end{proposition}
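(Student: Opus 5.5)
The plan is to use the involution \(D_n\) of Lemma \ref{lem-alexander} to carry \(S_n\) bijectively onto \(\lbrace\vek g\in\mathcal F_n:\tp(\vek g)\le n-2\rbrace\), and then to identify the cardinality of that set with \(F^n(n-2)\). Example \ref{ex-alexander-f3} already shows the pattern at \(n=3\).

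\textbf{Step 1: \(S_n\) in terms of \(\indeg\).} An element \(\vek s\in\mathcal F_n\) lies in \(S_n\) exactly when \(s(0)=\binom n0\) and \(s(1)=\binom n1\). By Definition \ref{def-stats} this says \(\indeg(\vek s)\ge2\); the full simplex, with \(\indeg=n+1\), is included. For \(n=1\) the condition \(\indeg\ge2\) still reads \(s(0)=1,\ s(1)=1\), so no case split is needed.

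\textbf{Step 2: transport by \(D_n\).} By \eqref{eq:alexander-top}, \(\tp(D_n(\vek s))=n-\indeg(\vek s)\), so \(\indeg(\vek s)\ge2\) if and only if \(\tp(D_n(\vek s))\le n-2\). Since \(D_n\) is an involution of \(\mathcal F_n\), it restricts to a bijection
\[
  S_n\;\longrightarrow\;\lbrace\vek g\in\mathcal F_n:\tp(\vek g)\le n-2\rbrace .
\]
At \(n=1\) the target is \(\lbrace\vek g:\tp(\vek g)\le-1\rbrace=\lbrace\vek 0\rbrace\), which has one element, matching \(\lvert S_1\rvert=1\).

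\textbf{Step 3: identify the count with Linusson's.} The point needing care is matching the target set with Definition \ref{def-linusson} at \(p=n\) and face bound \(m=n-2\). As in the proof of Lemma \ref{lem-joint}, the \(f\)-vectors of complexes on at most \(n\) vertices are exactly the elements of \(\mathcal F_n\): any such complex embeds in \([n]\), and conversely every order ideal of \(\mathcal P([n])\) is such a complex. Both degenerate vectors, of the void family and of \(\lbrace\emptyset\rbrace\), are counted on both sides. Under this identification the clause \(f_j=0\) for \(j>m\) is exactly \(\tp\le m\). One could alternatively sum \(\sum_{\rho}E^n(n-2,\rho-1)\) and use \(F^p(n)=\sum_kE^p(n,k)\) from Theorem \ref{thm-linusson}, but the direct identification is cleaner.

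\textbf{Edge case and main obstacle.} At \(n=1\) the face bound is \(m=-1\), which is still within the range \(n\ge-1\) of Definition \ref{def-linusson}. There \(F^1(-1)=1\), consistent with the boundary value \(F^p(-1)=1\) of Theorem \ref{thm-linusson}, so the convention \eqref{eq:E-minus-two} is never reached. The main obstacle is therefore purely definitional: making sure that the identification of Linusson's objects with elements of \(\mathcal F_n\) is a genuine bijection at \(p=n\), including both degenerate cases. With that settled, the cardinalities agree and \(\lvert S_n\rvert=F^n(n-2)\).
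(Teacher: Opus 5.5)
Your proof is correct and is essentially the paper's argument: the paper also restricts the involution \(D_n\) to a bijection from \(S_n\) onto \(\lbrace\vek g\in\mathcal F_n:\tp(\vek g)\le n-2\rbrace\), and identifies the size of that set with \(F^n(n-2)\) by reading Linusson's definition at \(p=n\), where \(f_1\le n\) is automatic. The only difference is cosmetic: the paper reads properness off the top two coordinates, \(D_n(\vek s)(n)=1-s(0)\) and \(D_n(\vek s)(n-1)=n-s(1)\), whereas you pass through \(\indeg(\vek s)\ge2\) and \eqref{eq:alexander-top}.
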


\begin{proof}
By Definition \ref{def-linusson} at \(p=n\), where the condition \(f_1\le n\) is automatic,
\(F^n(m)\) is the number of \(\vek s\in\mathcal F_n\) with \(\tp(\vek s)\le m\). Now
\begin{align}
  D_n(\vek s)(n)   &= \tbinom nn-s(0) = 1-s(0) ,\label{eq:D-top}\\
  D_n(\vek s)(n-1) &= \tbinom n{n-1}-s(1) = n-s(1) ,\label{eq:D-next}
\end{align}
so
\begin{align*}
  \vek s\in S_n
  &\iff s(0)=1\ \text{and}\ s(1)=n\\
  &\iff D_n(\vek s)(n)=D_n(\vek s)(n-1)=0
   \iff \tp(D_n(\vek s))\le n-2 .
\end{align*}
As \(D_n\) is an involution of \(\mathcal F_n\) by Lemma \ref{lem-alexander}, it restricts to a bijection
\begin{equation}\label{eq:Sn-bijection}
  D_n\colon S_n\;\xrightarrow{\ \sim\ }\;\bigl\lbrace\vek g\in\mathcal F_n\;:\;\tp(\vek g)\le n-2\bigr\rbrace ,
\end{equation}
a set of cardinality \(F^n(n-2)\).
\end{proof}

\begin{remark}
The hypothesis \(n\ge1\) is sharp rather than cosmetic. At \(n=0\) the left side of
Proposition \ref{prop-fnn2} is \(1\), since \(\lbrace\emptyset\rbrace\) is the one order ideal of
\(\mathcal P(\emptyset)\) meeting condition (iii) of Section \ref{sec-notation} and its \(f\)-vector is
\((1)\), while the right side is \(F^0(-2)=0\).
\label{rem-fnn2-sharp}
\end{remark}

Example \ref{ex-alexander-f3} above exhibits the involution, the persymmetry and this proposition
together on the ten elements of \(\mathcal F_3\).

In words: \textbf{the proper \(f\)-vectors are Alexander-dual to the \(f\)-vectors of order ideals of
dimension at most \(n-3\)}, and properness, two conditions at the bottom of \(\vek s\), becomes
the vanishing of the top two entries of \(D_n(\vek s)\). Including both degenerate \(f\)-vectors in
\(\mathcal F_n\), which Definition \ref{def-fn} does and which convention (i) of Section
\ref{sec-notation} requires, is what makes the duality an involution of the whole set rather than a
partial map: the void family and the full simplex are exchanged, as are \(\lbrace\emptyset\rbrace\)
and the boundary of the simplex.

\begin{definition}
(\emph{Chained tuples.}) Fix \(n\ge1\), \(r\ge1\) and a sorted \(\vek d\), with gaps
\(\delta_i=d_i-d_{i-1}\). An \(r\)-tuple \((\vek s_1,\dots,\vek s_r)\in\mathcal F_n^{\,r}\) is
\textbf{chained} for \(\vek d\) if
\begin{equation}\label{eq:chaining-condition}
  \tp(\vek s_{i-1})\;<\;\indeg(\vek s_i)+\delta_i \qquad (i=2,\dots,r) ,
\end{equation}
and the set of all of them is written
\begin{equation}\label{eq:chained-set}
  \mathrm{CT}(n;\vek d)=\bigl\lbrace(\vek s_1,\dots,\vek s_r)\in\mathcal F_n^{\,r}\;:\;
    \text{the tuple is chained}\bigr\rbrace ,
\end{equation}
the letters being the initials of \textbf{chained tuples}. The tuple is \textbf{proper-chained} if in addition
every entry lies in \(S_n\), and the set of those is written \(\mathrm{CT}_{\mathrm{pr}}(n;\vek d)\).

Theorem \ref{thm-chaining} below shows that \(\lvert\mathrm{CT}(n;\vek d)\rvert=\lvert\mathrm{FF}(n;\vek
d)\rvert\) and Theorem \ref{thm-proper} the proper analogue. Algebraically the chained tuples are the
component tuples of the lex submodules, and the proper-chained ones those of the \textbf{proper} lex
submodules; that is how both theorems are proved.
\label{def-chained}
\end{definition}

\begin{remark}
The duality has one further consequence, and it constrains any future answer to the bijective
question of Section \ref{sec-verification}. Applying \(D_n\) to every entry and reversing the order, the map
\begin{equation}\label{eq:gap-reversal-map}
  (\vek s_1,\dots,\vek s_r)\longmapsto\bigl(D_n(\vek s_r),\dots,D_n(\vek s_1)\bigr)
\end{equation}
carries tuples chained for \((\delta_2,\dots,\delta_r)\) to tuples chained for the reversed gap
vector \((\delta_r,\dots,\delta_2)\), by \eqref{eq:alexander-indeg}--\eqref{eq:alexander-top}. So
\(|\mathrm{FF}(n;\vek d)|\) is
invariant under reversing the gaps, which was verified for \(n\le6\) over six gap vectors,
including non-palindromic ones. This is an ingredient for a bijective proof of
Theorem \ref{thm-chaining}, and a symmetry that any such proof must respect; it does not solve that
problem.

Two things delimit the symmetry sharply, both settled by computation, and the second of them is the
subject of Remark \ref{rem-proper-not-reversible} below.

\textbf{Nothing more than reversal is true:} \(\lvert\mathrm{FF}(n;\vek d)\rvert\) is not invariant under arbitrary
permutation of the gaps. At \(n=2\), \(r=4\), the three arrangements of the multiset
\(\lbrace0,0,1\rbrace\) give
\begin{equation}\label{eq:no-permutation-invariance}
  (0,0,1)\mapsto96,\qquad (0,1,0)\mapsto111,\qquad (1,0,0)\mapsto96 ,
\end{equation}
and the two that agree are precisely the two that are reverses of each other. So the involution of
this remark is the whole of the symmetry.
\label{rem-gap-reversal}
\end{remark}

\begin{remark}
(\emph{Gap reversal fails for the proper count.}) The second delimitation is that the proper count does
not inherit the symmetry of Remark \ref{rem-gap-reversal}. It is also one reason the proper variant of
Section \ref{sec-proper} costs more than imposing a restriction on the improper theory does.

The argument there cannot be run for \(\mathrm{FF}_{\mathrm{pr}}(n;\vek d)\), because \(D_n\) does not
preserve \(S_n\): by Proposition \ref{prop-fnn2} it carries \(S_n\) onto
\(\lbrace\vek g\in\mathcal F_n:\tp(\vek g)\le n-2\rbrace\), a different subset of \(\mathcal F_n\), so
the reversal map of Remark \ref{rem-gap-reversal} does not carry \(\mathrm{CT}_{\mathrm{pr}}(n;\vek d)\)
into itself. It does send a proper-chained tuple to a chained one; what it need not preserve is the
entrywise properness.

That much would leave open whether the invariance nonetheless holds for some other reason. It does
not: the conclusion itself is false, not merely unproved. At \(n=3\) the gap vectors
\((\delta_2,\delta_3)=(0,1)\) and \((1,0)\) give
\begin{align}
  \lvert\mathrm{FF}_{\mathrm{pr}}(3;0,0,1)\rvert &= 45,\label{eq:proper-not-reversible}\\
  \lvert\mathrm{FF}_{\mathrm{pr}}(3;0,1,1)\rvert &= 51,\label{eq:proper-not-reversible-b}
\end{align}
while the corresponding improper counts agree at \(236\), exactly as Remark
\ref{rem-gap-reversal} predicts. So \(45\ne51\) is a counterexample: any attempt to transport the
reversal symmetry to the proper count must fail.

The asymmetry is worth keeping in view when reading Section \ref{sec-proper}. Properness is two
conditions at the bottom of \(\vek s\), and Alexander duality moves the bottom of a vector to its
top; a symmetry built out of that duality therefore has no reason to respect a condition pinned to
one end, and here it does not.
\label{rem-proper-not-reversible}
\end{remark}
\section{The chaining reformulation}
\label{sec-chaining}
This section replaces a sumset cardinality by a count of tuples with a nearest-neighbour constraint.
The mechanism is that Amata and Crupi's lex submodules give a canonical representative for each
achievable Hilbert function, and that being a lex submodule is a condition on consecutive components
only.

\begin{lemma}
Let \(I\subseteq \Ext\) be a monomial ideal and \(\vek s=\HF\bigl((\Ext)/I\bigr)\in\mathcal F_n\).
\begin{enumerate}
\item For every integer \(t\ge0\): \(\mathfrak m^t\subseteq I\) if and only if \(\tp(\vek s)<t\).
\item \(\indeg(I)=\indeg(\vek s)\).
\end{enumerate}
\label{lem-dictionary}
\end{lemma}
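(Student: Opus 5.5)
Both parts follow from the dictionary \eqref{eq:dictionary}. The face complex is \(\Delta=\Delta(I)=\{\sigma: e_\sigma\notin I\}\), and \(s(j)=f_j(\Delta)\) equals the number of \(j\)-subsets \(\sigma\) with \(e_\sigma\notin I\). Since \(I\) is monomial, every homogeneous component \(I_j\) is spanned by the monomials \(e_\sigma\) it contains. The plan is to translate each algebraic condition into a statement about which monomials lie in \(I\), and then to read that off the \(f\)-vector coordinates.

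For (1), first observe that \(\mathfrak m^t\) is the monomial ideal spanned by all \(e_\sigma\) with \(|\sigma|\ge t\); when \(t>n\) it is \(0\). Hence \(\mathfrak m^t\subseteq I\) if and only if \(e_\sigma\in I\) for every \(|\sigma|\ge t\). That in turn is equivalent to \(s(j)=0\) for all \(j\ge t\), which is exactly \(\tp(\vek s)<t\).

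\begin{itemize}
\item The case \(t=0\) is covered: \(\mathfrak m^0=\Ext\), and \(\Ext\subseteq I\) means \(I=\Ext\). This gives the void family, with \(\tp=-1<0\).
\item For \(t>n\) both sides hold trivially.
\item Downward closure is not even needed here, since the condition is imposed directly on all degrees \(\ge t\).
\end{itemize}

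For (2), \(I_j\ne0\) if and only if some \(e_\sigma\) with \(|\sigma|=j\) lies in \(I\), which happens if and only if \(s(j)<\binom nj\). Taking the least such \(j\) on both sides gives \(\indeg(I)=\indeg(\vek s)\) whenever such a \(j\) exists. If none exists, then \(I=0\) and \(\vek s\) is the full simplex, and both conventions assign the value \(n+1\).

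There is no real obstacle. The only points needing care are that monomial ideals are spanned by their monomials, so that \(I_j\) is determined by which \(e_\sigma\) it contains, and the boundary conventions for \(t=0\), \(I=0\) and \(I=\Ext\).
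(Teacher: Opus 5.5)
Your proof is correct and takes essentially the same route as the paper's. Both reduce (1) to the fact that \(\mathfrak m^t\) is spanned by the monomials of degree \(\ge t\), and (2) to \(\dim I_j=\binom nj-s(j)\) (your ``some \(e_\sigma\) with \(|\sigma|=j\) lies in \(I\) iff \(s(j)<\binom nj\)''), handling the boundary cases \(t=0\), \(t>n\) and \(I=0\) by the same conventions.
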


\begin{proof}
(1) \(\mathfrak m^t\) is spanned by the monomials of degree \(\ge t\), so
\(\mathfrak m^t\subseteq I\) iff \(I_j=\wedge^jV\) for all \(j\ge t\), iff \(s(j)=0\) for all \(j\ge t\),
iff \(\tp(\vek s)<t\). This covers \(t=0\) (then \(\mathfrak m^0=\Ext\subseteq I\) iff \(I=\Ext\) iff
\(\tp(\vek s)=-1<0\)) and \(t>n\) (then \(\mathfrak m^t=0\) and \(\tp(\vek s)\le n<t\), both sides true). For
\(t<0\) the equivalence fails, the left side being vacuously true and the right side impossible,
which is exactly why sortedness of \(\vek d\) is needed in Theorem \ref{thm-chaining}.
(2) \(\dim I_j=\binom nj-s(j)\), so the least \(j\) with \(I_j\ne0\) is the least \(j\) with
\(s(j)<\binom nj\). For \(I=0\) both sides are \(n+1\) by convention; any value exceeding \(n\) gives
the same truth value in (1), so the convention is harmless.
\end{proof}

\begin{theorem}
Fix \(n\ge1\), \(r\ge1\) and integers \(d_1\le d_2\le\cdots\le d_r\), with
\(\delta_i=d_i-d_{i-1}\). With \(\mathrm{CT}(n;\vek d)\) the set of chained tuples of
Definition \ref{def-chained},
\begin{equation}\label{eq:auto-14}
  \lvert\mathrm{FF}(n;\vek d)\rvert\;=\;\lvert\mathrm{CT}(n;\vek d)\rvert ,
\end{equation}
\label{thm-chaining}
\end{theorem}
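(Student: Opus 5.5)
The plan is to show that the summation map $\Sigma_{\vek d}$ of \eqref{eq:sumset-map}, restricted to $\mathrm{CT}(n;\vek d)$, is a bijection onto $\mathrm{FF}(n;\vek d)$. To do this I will identify chained tuples with the component tuples of lex submodules of $F$, and then use Amata and Crupi's Kruskal--Katona theorem for $F$ \cite{AmataCrupi2020KK,AmataCrupi2020}. That theorem says every Hilbert function $\HF(F/M)$ of a monomial submodule $M$ is also the Hilbert function of a lex submodule.

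\textbf{Step 1: the lex condition in components.} Recall that a monomial submodule $L=\bigoplus_iI_ig_i$ is lex when, for every degree $j$, the monomials of $L_j$ form an initial segment of $\mathrm{Mon}_j(F)$. The order puts the position $g_1>g_2>\cdots>g_r$ first and then lex within each position. In components this splits into two requirements.
\begin{enumerate}
\item Each $I_i$ is a lex ideal of $\Ext$.
\item If $(I_i)_{j-d_i}\neq0$, then $(I_{i'})_{j-d_{i'}}=\wedge^{j-d_{i'}}V$ for every $i'<i$.
\end{enumerate}

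I will first reduce (2) to adjacent pairs $i'=i-1$, by induction downward in $i'$. If component $i-1$ is full in degree $j$ and has at least one monomial there, then it is nonzero there, and the next step of the induction applies. If $j-d_{i-1}>n$, component $i-1$ has no monomials in degree $j$; the requirement on $i-2$ is then vacuous, since $j-d_{i-2}\ge j-d_{i-1}>n$. The case $j-d_{i-1}<0$ cannot occur, because $j-d_i\ge0$ and $d_{i-1}\le d_i$.

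The adjacent condition says that $\mathfrak m^{t}\subseteq I_{i-1}$ with $t=\indeg(I_i)+\delta_i$. Here $t\ge0$ because the gaps are non-negative, which is where sortedness enters. By Lemma \ref{lem-dictionary}, parts (1) and (2), this is exactly $\tp(\vek s_{i-1})<\indeg(\vek s_i)+\delta_i$ with $\vek s_i=\HF((\Ext)/I_i)$. When $I_i=0$ the condition is vacuous on both sides.

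\textbf{Step 2: chained tuples correspond to lex submodules.} Each $\vek s\in\mathcal F_n$ is the Hilbert function of exactly one lex ideal of $\Ext$, namely the one whose degree-$j$ part is the last $\binom nj-s(j)$ monomials. This is the exterior Kruskal--Katona theorem; the dictionary \eqref{eq:dictionary} translates it between ideals and order ideals. Step 1 then gives a bijection between lex submodules of $F$ and $\mathrm{CT}(n;\vek d)$. Under this bijection $\HF(F/L)=\Sigma_{\vek d}(\vek s_1,\dots,\vek s_r)$, by \eqref{eq:auto-1}.

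\textbf{Step 3: injectivity and surjectivity.} The map $L\mapsto\HF(F/L)$ is injective on lex submodules, because an initial segment of $\mathrm{Mon}_j(F)$ is determined by its cardinality $\dim F_j-\HF(F/L)(j)$. So $\Sigma_{\vek d}$ is injective on $\mathrm{CT}(n;\vek d)$. Surjectivity onto $\mathrm{FF}(n;\vek d)$ is the Amata--Crupi theorem applied to an arbitrary monomial $M$. Together these give \eqref{eq:auto-14}.

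The main obstacle is Step 1. The $\mathrm{Mon}_j(F)$ order must be matched exactly with Amata and Crupi's convention, that is, which generator comes first, and \cite[Definition 2.7]{AmataCrupi2020KK} must be confirmed to be the adjacent chaining condition. The boundary cases also need care: degrees outside $[0,n]$, $I_i=0$ with $\indeg=n+1$, and $I_i=\Ext$. If their order were the reverse one, I would first relabel the generators and check that sortedness still yields $t\ge0$.
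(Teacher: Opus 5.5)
Your proposal is correct and follows the paper's proof essentially step for step. Lex ideals biject with $\mathcal F_n$ by exterior Kruskal--Katona, and lex submodules biject with chained tuples via Lemma~\ref{lem-dictionary}; the map $L\mapsto\HF(F/L)$ is injective by the initial-segment property and surjective by Amata--Crupi's Theorem~4.2. The only difference is that your Step~1 rederives the equivalence of Amata and Crupi's Definitions~2.6 and~2.7 by hand where the paper cites it, and in passing from the degreewise adjacent condition to $\mathfrak m^{t}\subseteq I_{i-1}$ you silently use that a nonzero monomial ideal of $\Ext$ is nonzero in every degree from its initial degree up to $n$ (multiply a generator by the missing variables), which deserves one explicit line.
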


\begin{proof}
(a) By Definition \ref{def-setup} and the dictionary, \(\mathrm{FF}(n;\vek d)\) is the set of \(\HF(F/M)\)
for \(M\) a monomial submodule. Indeed, for \(M=\bigoplus_iI_ig_i\),
\[
  \HF\Bigl(\frac FM\Bigr)(j)=\sum_i\dim\Bigl(\frac{\Ext}{I_i}\Bigr)_{j-d_i}
    =\sum_i\shift^{d_i}(\vek s_i)(j) ,
\]
and as \((I_1,\dots,I_r)\) ranges over all tuples of monomial ideals, unconstrained by one another,
\((\vek s_1,\dots,\vek s_r)\) ranges over all of \(\mathcal F_n^{\,r}\).

(b) Restricted to \textbf{lex} ideals, \(I\mapsto\HF\bigl((\Ext)/I\bigr)\) is a bijection onto \(\mathcal F_n\). It is well
defined (a lex ideal is monomial); injective, since a lex ideal is degreewise the lex-initial segment
of \(\mathrm{Mon}_j(\Ext)\) of size \(\binom nj-s(j)\) and is therefore determined by \(\vek s\); and
surjective by Kruskal--Katona in its exterior form \autocite[Theorem 4.1]{AramovaHerzogHibi1997}, every \(f\)-vector being attained by a compressed
family, whose complementary ideal is lex.

(c) By the equivalence of Definitions 2.6 and 2.7 of \autocite{AmataCrupi2020KK}, a lex submodule of \(F\) is exactly
an \(L=\bigoplus_iL_ig_i\) with every \(L_i\) a lex ideal and
\[
  \mathfrak m^{\,\rho_i+d_i-d_{i-1}}\;\subseteq\;L_{i-1}
  \qquad (i=2,\dots,r),\qquad \rho_i=\indeg(L_i) .
\]
The exponent is \(\ge0\) because \(\rho_i\ge0\) and \(d_i\ge d_{i-1}\), so
Lemma \ref{lem-dictionary} applies and turns the side condition into
\(\tp(\vek s_{i-1})<\indeg(\vek s_i)+\delta_i\). With (b) componentwise, \(L\mapsto(\vek s_1,\dots,\vek s_r)\) is a
bijection from the lex submodules onto \(\mathrm{CT}(n;\vek d)\).

(d) \(L\mapsto\HF(F/L)\) is a bijection from lex submodules onto \(\mathrm{FF}(n;\vek d)\). Injective: by
Definition 2.6, \(\mathrm{Mon}_j(L)\) is the initial segment of \(\mathrm{Mon}_j(F)\) of size
\(\dim F_j-\HF(F/L)(j)\) in the module order, and an initial segment is determined by its
cardinality, so \(L\) is determined degreewise. Surjective: monomial submodules are graded, so
\begin{equation}\label{eq:FF-in-lex}
  \mathrm{FF}(n;\vek d)\;\subseteq\;\Bigl\lbrace\HF\Bigl(\frac FM\Bigr):M\ \text{graded}\Bigr\rbrace
  \;=\;\Bigl\lbrace\HF\Bigl(\frac FL\Bigr):L\ \text{lex}\Bigr\rbrace
\end{equation}
by \autocite[Theorem 4.2]{AmataCrupi2020KK}, implication (a) \(\Leftrightarrow\) (c); and conversely a lex
submodule is monomial, so the reverse inclusion holds by (a).

Composing (c) and (d) gives the claim.
\end{proof}

It is worth saying exactly which map is being inverted, since there are two and they point in
opposite directions. Theorem \ref{thm-chaining} says that
\(\lvert\mathrm{FF}(n;\vek d)\rvert=\lvert\mathrm{CT}(n;\vek d)\rvert\), with
\(\mathrm{CT}(n;\vek d)\) the set of chained tuples \eqref{eq:chained-set}. What the theorem's proof
establishes is more than that equality of cardinalities: the summation map \(\Sigma_{\vek d}\) of
\eqref{eq:sumset-map}, restricted to the chained tuples,
\begin{equation}\label{eq:sigma-restricted}
\begin{split}
  \Sigma_{\vek d}\colon\mathrm{CT}(n;\vek d) &\xrightarrow{\ \sim\ } \mathrm{FF}(n;\vek d),\\
  (\vek s_1,\dots,\vek s_r) &\longmapsto \vek x=\sum_{i=1}^{r}\shift^{d_i}(\vek s_i),
\end{split}
\end{equation}
is a \textbf{bijection}. That is the content: the chained tuples are a set of representatives for the fibres
of \(\Sigma_{\vek d}\), one from each. The proof produces the inverse through Amata--Crupi's lex
submodule, hence algebraically. The inverse also has a closed form.

\begin{lemma}
(\emph{The explicit inverse.}) For \(\vek x\in\mathrm{FF}(n;\vek d)\) and \(0\le i\le r\) put
\begin{equation}\label{eq:def-Q}
  Q_i(j)=\max\Bigl(0,\;x(j)-\sum_{k>i}\tbinom n{j-d_k}\Bigr) ,
\end{equation}
so that \(\vek Q_0=\vek 0\) and \(\vek Q_r=\vek x\), and
\begin{align}
  \Phi_{\vek d}(\vek x) &= (\vek{\hat s}_1,\dots,\vek{\hat s}_r),\label{eq:water-filling}\\
  \hat s_i(m) &= Q_i(m+d_i)-Q_{i-1}(m+d_i) .\label{eq:water-filling-component}
\end{align}
Write \(\vek d'=(d'_1,\dots,d'_r)\) for the reflected shift vector, \(d'_i=d_1+d_r-d_{r+1-i}\);
it is again sorted, its gap vector is that of \(\vek d\) reversed, as in
Remark \ref{rem-gap-reversal}, and \(d'_1=d_1\).

\begin{enumerate}
\item If \((\vek s_1,\dots,\vek s_r)\in\mathrm{CT}(n;\vek d)\) and
\(\Sigma_{\vek d}(\vek s_1,\dots,\vek s_r)=\vek x\), then \((\vek s_1,\dots,\vek s_r)=\Phi_{\vek d}(\vek x)\).
Consequently \(\Phi_{\vek d}\) is the inverse of \eqref{eq:sigma-restricted}.
\item \(\Phi_{\vek d}\) is equivariant under the involution \eqref{eq:gap-reversal-map}: if
\(\vek x^{\vee}\) denotes the image of \(\vek x\) under that involution, read on sums, then the
\(i\)-th component of \(\Phi_{\vek d'}(\vek x^{\vee})\) is \(D_n(\vek{\hat s}_{r+1-i})\).
\end{enumerate}
\label{lem-explicit-inverse}
\end{lemma}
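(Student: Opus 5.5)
The plan for part (1) is to show that on a chained tuple the truncations $Q_i$ are exactly the partial sums,
\[
  Q_i(j)=\sum_{k\le i}s_k(j-d_k)\qquad(0\le i\le r,\ j\in\mathbb Z),
\]
with the convention $\binom nm=0$ for $m<0$ or $m>n$, so that every $\vek s_k$ is read on all of $\mathbb Z$ as in Definition~\ref{def-fn}. Given this identity, \eqref{eq:water-filling-component} telescopes to $\hat s_i(m)=s_i(m)$.

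To prove the identity, write $\vek x=\sum_k\shift^{d_k}(\vek s_k)$. Then
\[
  x(j)-\sum_{k>i}\tbinom n{j-d_k}\;=\;\sum_{k\le i}s_k(j-d_k)\;-\;\sum_{k>i}\Bigl(\tbinom n{j-d_k}-s_k(j-d_k)\Bigr),
\]
and both sums on the right are sums of non-negative terms. Say that component $k$ is \emph{full at $j$} if $s_k(j-d_k)=\binom n{j-d_k}$; this includes the case where both sides vanish out of range. The identity follows from one structural claim.

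\textbf{Claim.} In each absolute degree $j$, if some component $i-1$ has $s_{i-1}(j-d_{i-1})>0$, then every component $k\ge i$ is full at $j$.

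The claim gives the identity by cases on $i$ and $j$:
\begin{itemize}
\item If every $k>i$ is full at $j$, the subtracted sum vanishes and the maximum with $0$ is inactive.
\item Otherwise some $k>i$ is not full at $j$. By the contrapositive of the claim, every component before $k$, in particular every component $\le i$, is zero at $j$. Then the first sum vanishes, the expression is $\le0$, and $Q_i(j)=0$, which again equals the partial sum.
\end{itemize}

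The claim itself I will prove by induction along the chain using \eqref{eq:chaining-condition}, and this is where the care is needed. Suppose $s_{i-1}(j-d_{i-1})>0$. Then $j-d_{i-1}\le\tp(\vek s_{i-1})$, so the chaining condition, rewritten as $\tp(\vek s_{i-1})+d_{i-1}<\indeg(\vek s_i)+d_i$, gives $j-d_i<\indeg(\vek s_i)$. Hence component $i$ is full at $j$; the case $j-d_i<0$ is trivially full.

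To continue to $i+1$, split on the position of $j-d_i$:
\begin{itemize}
\item Since $d_i\ge d_{i-1}$ and $j-d_{i-1}\le n$, we always have $j-d_i\le n$. The remaining cases are $0\le j-d_i\le n$ and $j-d_i<0$.
\item If $0\le j-d_i\le n$, fullness means $s_i(j-d_i)=\binom n{j-d_i}>0$, and the same step applies with $i$ in place of $i-1$.
\item If $j-d_i<0$, then $j-d_k<0$ for all $k\ge i$ by sortedness, so all later components are zero and trivially full.
\end{itemize}
I expect this bookkeeping of out-of-range binomials, and the use of sortedness (the same point as in Lemma~\ref{lem-dictionary}), to be the only real obstacle. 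The argument needs only these inequalities, not $\vek x\in\mathrm{FF}$.

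The "consequently" clause then follows quickly. By Theorem~\ref{thm-chaining}, or rather its proof, $\Sigma_{\vek d}$ restricted to $\mathrm{CT}(n;\vek d)$ is a bijection onto $\mathrm{FF}(n;\vek d)$. So every $\vek x$ in $\mathrm{FF}(n;\vek d)$ has a unique chained preimage, and part (1) identifies $\Phi_{\vek d}(\vek x)$ with that preimage. Thus $\Phi_{\vek d}$ is the inverse.

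For part (2), let $(\vek s_1,\dots,\vek s_r)=\Phi_{\vek d}(\vek x)$ be the chained preimage of $\vek x$. By Remark~\ref{rem-gap-reversal}, $(D_n(\vek s_r),\dots,D_n(\vek s_1))$ is chained for the reversed gaps, that is, for $\vek d'$. This rests on \eqref{eq:alexander-indeg}--\eqref{eq:alexander-top}: one checks $\tp(D_n\vek s_{r+1-i})<\indeg(D_n\vek s_{r-i})+\delta'_{i+1}$ with $\delta'_{i+1}=\delta_{r+1-i}$. The vector $\vek x^{\vee}$ is by definition its $\Sigma_{\vek d'}$-image. Applying part (1) for $\vek d'$ then yields that the $i$-th component of $\Phi_{\vek d'}(\vek x^{\vee})$ is $D_n(\vek{\hat s}_{r+1-i})$. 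The only thing left to verify is that the reflection $d'_i=d_1+d_r-d_{r+1-i}$ is sorted with the reversed gap vector, which is immediate.
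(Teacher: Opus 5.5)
Your proof is correct, and it takes a different route from the paper for part (2).

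\textbf{Part (1).} This is essentially the paper's argument. Your Claim is the paper's window structure. The paper puts $\iota_k=\indeg(\vek s_k)+d_k$ and $t_k=\tp(\vek s_k)+d_k$, and notes $\iota_{k-1}\le t_{k-1}+1\le\iota_k$. It then reads off a single active index in each degree $j$, with every later component full and every earlier one zero. The one difference is what gets propagated. The paper carries the inequality $j<\iota_k$ itself, via $\indeg\le\tp+1$, and so uses no sortedness at all. You carry fullness, which needs $s_k(j-d_k)>0$ to re-enter the chaining step. That costs you sortedness in the out-of-range case $j-d_k<0$, which is harmless here since $\vek d$ is sorted.

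\textbf{Part (2).} Here the two proofs genuinely differ. The paper computes with the clamp directly. With $J=d_1+d_r+n$ and $x^{\vee}(j')=\sum_k\binom n{j'-d'_k}-x(J-j')$, the identity $\max(0,a)=a+\max(0,-a)$ gives $Q'_i(j')=\sum_{l>r-i}\binom n{j-d_l}-x(j)+Q_{r-i}(j)$, where $j=J-j'$. Differencing then yields $\hat s'_i(m)=\binom nm-\hat s_{r+1-i}(n-m)$. That computation holds for every integer vector $\vek x$. It uses neither part (1) nor Theorem~\ref{thm-chaining}, which is what entitles the paper to say the symmetry holds for $\Phi_{\vek d}$ by construction.

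You instead take the chained preimage, transport it by the reversal map (which preserves chainedness by \eqref{eq:alexander-indeg}--\eqref{eq:alexander-top}), and invoke the uniqueness of part (1) for $\vek d'$. This is shorter and more conceptual. It proves the statement as written, since the lemma only concerns $\vek x\in\mathrm{FF}(n;\vek d)$.

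The cost is that your argument presupposes that $\vek x$ has a chained preimage. That comes only from the lex-submodule half of Theorem~\ref{thm-chaining}, which is exactly what Remark~\ref{rem-explicit-inverse} leaves open combinatorially. So your version of (2) could not be used inside a direct bijective proof, whereas the paper's could.
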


\begin{proof}
(1) Write \(\iota_k=\indeg(\vek s_k)+d_k\) and \(t_k=\tp(\vek s_k)+d_k\), so that, in absolute degree
\(j\), component \(k\) contributes \(s_k(j-d_k)=\binom n{j-d_k}\) when \(j<\iota_k\) and \(0\) when
\(j>t_k\), by the definitions of \(\indeg\) and \(\tp\). The chaining condition in its absolute form
\eqref{eq:chaining-absolute} is \(t_{k-1}<\iota_k\), and \(\indeg\le\tp+1\) gives
\(\iota_k\le t_k+1\); together they give \(\iota_{k-1}\le t_{k-1}+1\le\iota_k\), so the \(\iota_k\) are
non-decreasing. Fix \(j\) and let \(i\) be the largest index with \(\iota_i\le j\), or \(i=0\) if there is
none. For \(k>i\) we have \(j<\iota_k\), so component \(k\) is full in degree \(j\); for \(k<i\) we have
\(t_k<\iota_{k+1}\le\iota_i\le j\), so component \(k\) vanishes there. Hence
\[
  x(j)=\sum_{k>i}\tbinom n{j-d_k}+s_i(j-d_i),
\]
with the last term absent when \(i=0\). Substituting into \eqref{eq:def-Q}: for \(i'\ge i\) the
argument of the maximum is \(\sum_{i<k\le i'}\binom n{j-d_k}+s_i(j-d_i)\ge0\), so \(Q_{i'}(j)\) equals
it; for \(i'<i\) the argument is at most \(s_i(j-d_i)-\binom n{j-d_i}\le0\), since every
\(f\)-vector lies in the box \eqref{eq:binomial-box}, so \(Q_{i'}(j)=0\). Taking differences,
\(Q_k(j)-Q_{k-1}(j)\) is \(0\) for \(k<i\), \(s_i(j-d_i)\) for \(k=i\), and \(\binom n{j-d_k}\) for
\(k>i\), which is \(s_k(j-d_k)\) in every case. So \(\hat s_k=\vek s_k\) for every \(k\). By
Theorem \ref{thm-chaining} every \(\vek x\in\mathrm{FF}(n;\vek d)\) is the sum of some chained tuple,
which is therefore \(\Phi_{\vek d}(\vek x)\); so \(\Phi_{\vek d}\) inverts \eqref{eq:sigma-restricted}.

(2) With \(\vek d'\) as in the statement, put \(J=d_1+d_r+n\) and
\(x^{\vee}(j')=\sum_k\binom n{j'-d'_k}-x(J-j')\), and write \(Q'_i\) and \(\hat s'_i\) for
\eqref{eq:def-Q} and \eqref{eq:water-filling-component} computed from \(\vek x^{\vee}\) on
\(\vek d'\). With \(j=J-j'\) we have
\((j'-d'_k)+(j-d_{r+1-k})=n\), so \(\binom n{j'-d'_k}=\binom n{j-d_{r+1-k}}\), and splitting the sum
at \(k=i\),
\[
  Q'_i(j')=\max\bigl(0,\;T_i(j)-x(j)\bigr),\qquad T_i(j)=\sum_{l>r-i}\tbinom n{j-d_l}.
\]
Since \(\max(0,a)=a+\max(0,-a)\) and \(\max\bigl(0,x(j)-T_i(j)\bigr)=Q_{r-i}(j)\), this is
\(Q'_i(j')=T_i(j)-x(j)+Q_{r-i}(j)\). Take \(j'=m+d'_i\), so that \(j=(n-m)+d_{r+1-i}\). Then
\(T_i(j)-T_{i-1}(j)=\binom n{n-m}=\binom nm\), the \(x(j)\) cancel, and
\begin{align*}
  \hat s'_i(m)&=Q'_i(j')-Q'_{i-1}(j')=\tbinom nm-\bigl(Q_{r+1-i}(j)-Q_{r-i}(j)\bigr)\\
  &=\tbinom nm-\hat s_{r+1-i}(n-m)=D_n(\vek{\hat s}_{r+1-i})(m).
\end{align*}
\end{proof}

Two features of the proof of (1) are worth recording; both became visible only once it was
formalised (Section \ref{sec-verification}), where it is \texttt{phi\_injOn\_chained}. It needs no sortedness,
the chaining condition being read in the absolute form \eqref{eq:chaining-absolute} of Remark
\ref{rem-sorted} below. And it uses about \(f\)-vectors only the box bound, in one step: everything
else holds for arbitrary non-negative integer vectors satisfying the chaining condition and
\(\indeg\le\tp+1\), so the clamp is elementary in a stronger sense than ``no Kruskal--Katona''. The
word \textbf{clamp} is meant in its numerical sense: \(\max(0,\cdot)\) applied degreewise, which clamps a
would-be negative entry up to zero, and that single truncation is the whole of \eqref{eq:def-Q}.
Taken degree by degree the effect is a \textbf{water-filling} from the last component, the metaphor being
that \(\sum_{k>i}\binom n{j-d_k}\) is the capacity of the components after the \(i\)-th and only what
overflows that capacity reaches the earlier ones; Example \ref{ex-water-filling} below runs it by hand
in the smallest case where it has anything to do. Part (2) says that the symmetry any bijective proof
must respect, that of Remark \ref{rem-gap-reversal}, holds here by construction.

\begin{remark}
(\emph{What the explicit inverse still needs.}) Lemma \ref{lem-explicit-inverse} is the elementary half
of the bijection. What is \textbf{not} yet combinatorial is the other half: that the clamps \eqref{eq:water-filling-component} are
\(f\)-vectors at all. That is true, by Theorem \ref{thm-chaining} again through the lex
representative, but proving it directly is open. For \(r=2\) it reduces to two properties of the
Kruskal--Katona shadow function \(\partial_l\): subadditivity,
\(\partial_l(p+q)\le\partial_l(p)+\partial_l(q)\), which follows from Kruskal--Katona by placing two
colex families on disjoint ground sets; and level monotonicity, \(\partial_l(c)\le\partial_{l+1}(c)\),
which we can prove except on a sporadic set of cases, the smallest being \(l=4\), \(c=6\). Every
instance of the second that the argument uses at a given \(n\) was checked exactly for \(n\le16\),
so for \(r=2\) a combinatorial proof is complete for \(n\le16\); in general it is not. For \(r\ge3\)
a further inequality is needed, and is open. So the bijection is explicit and one-directionally elementary, and the
bijective-proof problem of Section \ref{sec-verification} stands.
\label{rem-explicit-inverse}
\end{remark}

\begin{mexample}
(\emph{The clamp \eqref{eq:water-filling} in the smallest case where it has something to do.}) Take
\(n=3\), \(r=2\) and \(\vek d=(0,0)\), so that there is one gap and it is \(\delta_2=0\), and take
\(\vek x=(2,5,4,1)\in\mathrm{FF}(3;0,0)\).

By hand first. The only \(\vek s\in\mathcal F_3\) with a non-zero entry in degree \(3\) is the
full simplex \((1,3,3,1)\), so a decomposition \(\vek x=\vek s_1+\vek s_2\) has to use it in one of
its two places, and then the other summand is forced to be \((1,2,1,0)\). The two ordered pairs are
therefore
\[
  \bigl((1,2,1,0),\,(1,3,3,1)\bigr)
  \qquad\text{and}\qquad
  \bigl((1,3,3,1),\,(1,2,1,0)\bigr),
\]
and exactly the first of them is chained, since
\begin{align*}
  \tp(1,2,1,0)&=2\;<\;4=\indeg(1,3,3,1)+\delta_2 ,\\
  \tp(1,3,3,1)&=3\;\not<\;1=\indeg(1,2,1,0)+\delta_2 .
\end{align*}

The clamp finds that pair without knowing either summand in advance. Here \(r=2\)
and \(d_1=d_2=0\), so \(\vek Q_2=\vek x\) and \(\vek Q_0=\vek 0\), and the whole computation is the single
vector
\begin{align*}
  Q_1(j) &= \max\Bigl(0,\;x(j)-\tbinom 3j\Bigr),\\
  \bigl(Q_1(0),Q_1(1),Q_1(2),Q_1(3)\bigr) &= (1,2,1,0),
\end{align*}
the four values being \(\max(0,2-1)=1\), \(\max(0,5-3)=2\), \(\max(0,4-3)=1\) and
\(\max(0,1-1)=0\). Then \eqref{eq:water-filling-component} reads
\begin{align*}
  \vek{\hat s}_1 &= \vek Q_1-\vek Q_0=(1,2,1,0),\\
  \vek{\hat s}_2 &= \vek Q_2-\vek Q_1=(2,5,4,1)-(1,2,1,0)=(1,3,3,1),
\end{align*}
which is the chained pair, in the right order.

The quantity \(Q_i(j)\) is what is left of \(x(j)\) after the
components \(g_{i+1},\dots,g_r\) have been filled to their joint capacity
\(\sum_{k>i}\binom n{j-d_k}\) in that degree, so degree by degree the last component is filled
first and only the overflow reaches the earlier ones. In the present instance the capacity of
\(g_2\) is \(\binom 3j=(1,3,3,1)\), the vector \(\vek x\) exceeds it by \((1,2,1,0)\), and that
overflow is exactly \(\vek{\hat s}_1\).

The two directions are worth naming once on this example:
\begin{align*}
  \Sigma_{\vek d}\bigl((1,2,1,0),(1,3,3,1)\bigr) &= (2,5,4,1)=\vek x,\\
  \Phi_{\vek d}(2,5,4,1) &= \bigl((1,2,1,0),(1,3,3,1)\bigr),
\end{align*}
of which the first is just addition and the second is the one that needed a
formula. The other ordered pair, \(\bigl((1,3,3,1),(1,2,1,0)\bigr)\), also sums to \(\vek x\), which
is why \(\Sigma_{\vek d}\) is not injective on all of \(\mathcal F_3^{\,2}\); it is injective once
restricted to \(\mathrm{CT}(3;0,0)\), which is what discards that pair.
\label{ex-water-filling}
\end{mexample}

\begin{remark}
Three hypotheses are stated here rather than left implicit.

\textbf{The order convention.} Amata and Crupi's module order makes \(g_1\)'s monomials the largest, so a
lex submodule fills \(g_1\) first and the chaining condition constrains the \textbf{earlier} component
\(I_{i-1}\) using the \textbf{later} component's initial degree. The inequality in Theorem \ref{thm-chaining}
is oriented accordingly.

\textbf{Sortedness.} \(d_1\le\cdots\le d_r\) is required as stated, because it is what makes every exponent
\(\rho_i+\delta_i\) non-negative, which is the range in which Lemma \ref{lem-dictionary}(1) holds. This
costs nothing: \(\mathrm{FF}(n;\vek d)\) is symmetric in its summands, so an unsorted shift vector should be
sorted first. The chained count as written is simply not the right-hand side for unsorted \(\vek d\).

Sortedness is spent in one place only.
Write the chaining condition in the equivalent \textbf{absolute} form
\begin{equation}\label{eq:chaining-absolute}
  \tp(\vek s_{i-1})+d_{i-1}\;<\;\indeg(\vek s_i)+d_i \qquad(i=2,\dots,r),
\end{equation}
which is the condition of Theorem \ref{thm-chaining} rearranged. Since \(\delta_i\) is defined as
\(d_i-d_{i-1}\), the two forms are the same inequality for every \(\vek d\), sorted or not, so
sortedness is not spent on the chaining condition at all. It is spent on the exponent
\(\rho_i+\delta_i\) in Amata and Crupi's construction, which must be non-negative for
Lemma \ref{lem-dictionary}(1) to apply. Everything downstream that uses the chaining condition only as
a condition (the disjoint-window normal form and part (1) of Lemma
\ref{lem-explicit-inverse}) needs no sortedness. This was established while formalising the argument
(Section \ref{sec-verification}), where the hypothesis turned out to be usable nowhere else; the
formal development still assumes a sorted \(\vek d\), but only so as to read each gap as a natural
number.

\textbf{Which half of Theorem 4.2.} Only implication (a) \(\Leftrightarrow\) (c) is used, the existence
and uniqueness of a lex representative, and that half is sound. The implication (b) \(\Rightarrow\) (a) of the same
theorem fails when initial generator degrees tie. The smallest instance is \(n=1\), \(r=2\),
\(\vek d=(0,0)\), with Hilbert sequence \((h_0,h_1)=(1,2)\): it satisfies (b), but \(h_0=1\) forces a
non-zero \(v\in M_0\), and then \(e_1v\ne0\) lies in \(M_1\), so \(h_1\le1\). The function
\texttt{isHilbertSequence} of Amata and Crupi's Macaulay2 package \autocite{AmataCrupi2021} also rejects it.
The repair, and the corrected statement, are in the third paper of this series; nothing here
depends on (b).
\label{rem-sorted}
\end{remark}
\section{The counting theorem}
\label{sec-main}
\begin{definition}
(\emph{The initial vector and the transfer matrices.}) With \(A_n\) as in Definition \ref{def-A}, define a
\textbf{row vector} \(\vek u_n\) and, for each integer \(\delta\ge0\), a \textbf{square matrix} \(T_n(\delta)\) by
\begin{align}
  u_n[\tau] &= \sum_{\rho}A_n[\tau][\rho],\label{eq:def-u}\\
  T_n(\delta)[\tau][\tau'] &= \sum_{\rho>\tau-\delta}A_n[\tau'][\rho].\label{eq:def-T}
\end{align}
The indexing is spelled out here rather than left to be inferred, because both indices of
\(T_n(\delta)\) run over one and the same set and it is easy to mistake either for the other. Both
\(\vek u_n\) and \(T_n(\delta)\) are indexed by values of \(\tp\),
\begin{equation}\label{eq:def-uT-range}
  \tau,\tau'\in\lbrace-1,0,1,\dots,n\rbrace ,
\end{equation}
so \(\vek u_n\) has \(n+2\) entries and \(T_n(\delta)\) is \((n+2)\times(n+2)\). In \(T_n(\delta)\) \textbf{the row
index \(\tau\) is the \(\tp\) of the previous summand and the column index \(\tau'\) is the \(\tp\) of
the current one}; the summation variable \(\rho\) is the \(\indeg\) of the current summand and is
summed out, so it is not an index of \(T_n\). Thus \(u_n[\tau]\) counts the \(\vek s\in\mathcal F_n\)
with \(\tp(\vek s)=\tau\), and \(T_n(\delta)[\tau][\tau']\) counts those with \(\tp(\vek s)=\tau'\)
whose \(\indeg\) is large enough to be chained after a predecessor of \(\tp\) equal to \(\tau\).
Finally \(\vek 1_n\) denotes the all-ones column vector of length \(n+2\), so that for a row vector
\(\vek x\) the product \(\vek x\cdot\vek 1_n\) is just the sum of the entries of \(\vek x\). Note
that \(\vek u_n\) is the row sum of
\(A_n\), and that \(T_n(\delta)\) is \textbf{increasing entrywise in \(\delta\)}, since enlarging \(\delta\)
only enlarges the range of \(\rho\).
\label{def-uT}
\end{definition}

\begin{theorem}
Fix \(n\ge1\), \(r\ge1\) and integers \(d_1\le\cdots\le d_r\), and let \(\delta_i=d_i-d_{i-1}\) for
\(2\le i\le r\) be the gaps. With \(\vek u_n\), \(T_n(\delta)\) and \(\vek 1_n\) as in
Definition \ref{def-uT}, built from the matrix \(A_n\) of Definition \ref{def-A} (equivalently, by
Lemma \ref{lem-joint}, assembled from one run of Linusson's recursion at \(p=n\)), we have
\begin{equation}\label{eq:auto-15}
  \lvert\mathrm{FF}(n;\vek d)\rvert\;=\;\vek u_n\cdot T_n(\delta_2)\cdot T_n(\delta_3)\cdots T_n(\delta_r)\cdot\vek 1_n .
\end{equation}
\label{thm-main}
\end{theorem}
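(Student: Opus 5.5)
By Theorem \ref{thm-chaining} it suffices to show that \(\lvert\mathrm{CT}(n;\vek d)\rvert\) equals the right-hand side of \eqref{eq:auto-15}. The argument is therefore purely combinatorial: a standard transfer-matrix count of sequences in which only consecutive entries interact, with \(\tp\) as the state. The chaining condition \eqref{eq:chaining-condition} for the pair \((\vek s_{i-1},\vek s_i)\) involves \(\vek s_{i-1}\) only through \(\tp(\vek s_{i-1})\), and \(\vek s_i\) only through \(\indeg(\vek s_i)\). Moreover \(\vek s_i\) passes only \(\tp(\vek s_i)\) on to the next constraint. So I would organise the count by the sequence of tops.

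I would define, for \(1\le m\le r\) and \(\tau\in\{-1,\dots,n\}\),
\[
N_m[\tau]=\#\bigl\{(\vek s_1,\dots,\vek s_m)\in\mathcal F_n^{\,m}:\ \text{chained for }(d_1,\dots,d_m),\ \tp(\vek s_m)=\tau\bigr\}.
\]
This uses the observation that a prefix of a chained tuple is chained for the truncated shift vector, and that chaining for \((d_1,\dots,d_m)\) imposes only the constraints \(i=2,\dots,m\). For \(m=1\) there is no constraint, so \(N_1[\tau]=\#\{\vek s:\tp(\vek s)=\tau\}=\sum_\rho A_n[\tau][\rho]=u_n[\tau]\) by \eqref{eq:def-u}.

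For the inductive step I would split a chained \(m\)-tuple with \(m\ge2\) into its chained \((m-1)\)-prefix, with \(\tp(\vek s_{m-1})=\tau\) say, and a last entry \(\vek s_m\) with \(\tp(\vek s_m)=\tau'\). The last entry is subject to the single extra condition \(\tau<\indeg(\vek s_m)+\delta_m\), that is \(\indeg(\vek s_m)>\tau-\delta_m\). For fixed \(\tau\) and \(\tau'\), the number of admissible \(\vek s_m\) is \(\sum_{\rho>\tau-\delta_m}A_n[\tau'][\rho]=T_n(\delta_m)[\tau][\tau']\) by \eqref{eq:def-T}. This count does not depend on which prefix was chosen. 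Hence \(N_m[\tau']=\sum_\tau N_{m-1}[\tau]\,T_n(\delta_m)[\tau][\tau']\), that is, \(N_m=N_{m-1}T_n(\delta_m)\) as row vectors. Induction then gives \(N_r=\vek u_nT_n(\delta_2)\cdots T_n(\delta_r)\), and summing over the final top, \(\lvert\mathrm{CT}(n;\vek d)\rvert=N_r\cdot\vek 1_n\). The case \(r=1\) is the empty product, giving \(\vek u_n\cdot\vek 1_n=\lvert\mathcal F_n\rvert\).

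The only point needing care is bookkeeping rather than mathematics. One must check that every \(\tp\) value lies in the index range \eqref{eq:def-uT-range} and every \(\indeg\) value in \eqref{eq:def-A-range}, so that no tuples are lost at the boundary. In particular the void family (\(\tp=-1\), \(\indeg=0\)) and the full simplex (\(\indeg=n+1\)) must be counted, which Definition \ref{def-stats} guarantees. One must also check that the sum over \(\rho\) in \eqref{eq:def-T} is taken over all \(\rho\) in range, even when \(\tau-\delta<0\). All of the substantive work has already been done by Theorem \ref{thm-chaining}, which I take as given.
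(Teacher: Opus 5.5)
Your proposal is correct and is essentially the paper's own proof: both reduce via Theorem~\ref{thm-chaining} to counting chained tuples, then group each summand by its $(\tp,\indeg)$ class so that the nearest-neighbour constraint turns the sum into $\vek u_n T_n(\delta_2)\cdots T_n(\delta_r)\vek 1_n$. Your induction on the row vectors $N_m$ spells out the rearrangement that the paper states in a single sentence.
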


\begin{proof}
By Theorem \ref{thm-chaining}, \(|\mathrm{FF}(n;\vek d)|\) counts tuples with
\(\tp(\vek s_{i-1})<\indeg(\vek s_i)+\delta_i\). Constraint \(i\) involves \(s_{i-1}\) only through \(\tp\) and
\(\vek s_i\) only through \(\tp\) and \(\indeg\). Summing over tuples in the order \(\vek s_1,\vek s_2,\dots,\vek s_r\)
and grouping each \(\vek s_i\) by its \((\tp,\indeg)\) class therefore gives the stated product: \(u_n[\tau]\)
counts the admissible \(\vek s_1\) with \(\tp=\tau\), and \(T_n(\delta_i)[\tau][\tau']\) counts the \(\vek s_i\)
with \(\tp=\tau'\) whose \(\indeg=\rho\) satisfies \(\tau<\rho+\delta_i\).
\end{proof}

\begin{mexample}
Continuing Example \ref{ex-A} at \(n=3\). The row sums of the matrix displayed there give the vector
\(\vek u_3=(1,1,3,4,1)\), and \eqref{eq:def-u}--\eqref{eq:def-T} then produce
\[
  T_3(0)=\begin{pmatrix}
    1&1&3&4&1\\ 0&1&3&4&1\\ 0&0&1&3&1\\ 0&0&0&1&1\\ 0&0&0&0&1
  \end{pmatrix},
  \;
  T_3(1)=\begin{pmatrix}
    1&1&3&4&1\\ 1&1&3&4&1\\ 0&1&3&4&1\\ 0&0&1&3&1\\ 0&0&0&1&1
  \end{pmatrix},
  \;
  T_3(4)=\begin{pmatrix}
    1&1&3&4&1\\ 1&1&3&4&1\\ 1&1&3&4&1\\ 1&1&3&4&1\\ 1&1&3&4&1
  \end{pmatrix}.
\]
\(T_3(0)\) is upper triangular with unit diagonal, which is Proposition \ref{prop-delta0}. Increasing
\(\delta\) by one shifts the whole pattern down one row, since the constraint
\(\rho>\tau-\delta\) weakens by exactly one step; that is the entrywise monotonicity noted in
Definition \ref{def-uT}. At \(\delta=4=n+1\), the threshold of
Proposition \ref{prop-threshold}, every row has become \(\vek u_3\) itself: the matrix is
\(\vek 1_3\,\vek u_3\), of rank one, and its only non-zero eigenvalue is
\(\vek u_3\cdot\vek 1_3=10=\lvert\mathcal F_3\rvert\), which is Proposition \ref{prop-perron-threshold} in
this instance. Reading off \(r=2\), \(\vek u_3\,T_3(0)\,\vek 1_3=43\) and \(\vek u_3\,T_3(4)\,\vek 1_3=100=10^2\).
\label{ex-uT}
\end{mexample}

One computation of \(A_n\) at fixed \(n\) answers every \(r\) and every shift vector at once, which
is what makes values far outside brute-force range available: \(|\mathrm{FF}(8;0,0)|=27655902\) is immediate, while direct enumeration of the sumset
becomes infeasible around \(n=6\), \(r=2\).

\begin{corollary}
For \(r=1\), \(|\mathrm{FF}(n;\vek d)|=\sum_\tau u_n[\tau]=F^n(n)=|\mathcal F_n|\), which is Linusson's own count.
\label{cor-r1}
\end{corollary}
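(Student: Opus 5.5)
The plan is to specialise Theorem~\ref{thm-main} to \(r=1\) and then identify the resulting sum with Linusson's count. When \(r=1\) there are no gaps, so the matrix product in \eqref{eq:auto-15} is empty. The right-hand side therefore collapses to \(\vek u_n\cdot\vek 1_n=\sum_\tau u_n[\tau]\). Theorem~\ref{thm-chaining} can also be read directly here: for \(r=1\) the chaining condition \eqref{eq:chaining-condition} is vacuous, so \(\mathrm{CT}(n;d_1)=\mathcal F_n\).

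Next, I would evaluate the sum. By \eqref{eq:def-u}, \(u_n[\tau]\) is the row sum of \(A_n\) in row \(\tau\), which is the number of \(\vek s\in\mathcal F_n\) with \(\tp(\vek s)=\tau\). Summing over \(\tau\in\{-1,\dots,n\}\) counts every element of \(\mathcal F_n\) exactly once, because every \(\vek s\in\mathcal F_n\) has \(\tp(\vek s)\) in that range; this is the grand-total remark in Definition~\ref{def-A}. So the sum equals \(\lvert\mathcal F_n\rvert\).

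Finally, \(\lvert\mathcal F_n\rvert=F^n(n)\). This is the identification made after Theorem~\ref{thm-linusson}: at \(p=n\), the bound \(f_1\le n\) holds automatically, and the face-size bound \(f_j=0\) for \(j>n\) is vacuous. Hence Linusson's objects at \(p=n\) are exactly the elements of \(\mathcal F_n\), degenerate vectors included. As a cross-check, the telescoping in Lemma~\ref{lem-joint} gives \(\sum_{\tau}u_n[\tau]=\sum_\rho E^n(n,\rho-1)\), which by the last clause of Theorem~\ref{thm-linusson} is also \(F^n(n)\).

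No step is a real obstacle. The only point needing care is that both degenerate \(f\)-vectors, the void family and \(\{\emptyset\}\), are counted on both sides. Definition~\ref{def-linusson} and convention~(i) guarantee this.
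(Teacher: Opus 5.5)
Your proposal is correct and takes the paper's route: the corollary is Theorem~\ref{thm-main} with an empty product, giving \(\vek u_n\cdot\vek 1_n=\sum_\tau u_n[\tau]=\lvert\mathcal F_n\rvert\), combined with the identification \(F^n(n)=\lvert\mathcal F_n\rvert\) made after Theorem~\ref{thm-linusson}. Your telescoping cross-check via Lemma~\ref{lem-joint} and \(F^p(n)=\sum_kE^p(n,k)\) is the same column-sum identity \eqref{eq:colsum-is-E} that the paper records in its appendix.
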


\begin{proposition}
\(|\mathrm{FF}(n;\vek d)|=|\mathcal F_n|^{\,r}\) if and only if every gap satisfies \(\delta_i\ge n+1\).
At \(\delta_i=n\) the constraint fails for exactly one pair, namely
\((\vek s_{i-1},\vek s_i)=(\text{full simplex},\text{void family})\).
\label{prop-threshold}
\end{proposition}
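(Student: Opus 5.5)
By Theorem \ref{thm-chaining}, \(|\mathrm{FF}(n;\vek d)|=|\mathrm{CT}(n;\vek d)|\), and \(\mathrm{CT}(n;\vek d)\subseteq\mathcal F_n^{\,r}\). The equality \(|\mathrm{FF}(n;\vek d)|=|\mathcal F_n|^r\) therefore holds if and only if every tuple is chained. Because the constraints are nearest-neighbour and \(\mathcal F_n\) is non-empty, this happens exactly when, for each \(i\), every pair \((\vek s_{i-1},\vek s_i)\in\mathcal F_n^{\,2}\) satisfies \(\tp(\vek s_{i-1})<\indeg(\vek s_i)+\delta_i\). A failing pair at position \(i\) extends to a non-chained \(r\)-tuple by filling the other slots with arbitrary elements. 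So the proposition reduces to a statement about a single gap \(\delta\): every pair satisfies \(\tp(\vek a)<\indeg(\vek b)+\delta\) if and only if \(\delta\ge n+1\).

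For this I will use the ranges of the two statistics from Definition \ref{def-stats}: \(\tp\) takes values in \(\lbrace-1,\dots,n\rbrace\) and \(\indeg\) in \(\lbrace0,\dots,n+1\rbrace\). The maximum \(\tp=n\) is attained only by the full simplex, since \(s(n)>0\) forces \(\Delta\ni[n]\). The minimum \(\indeg=0\) is attained only by the void family, since \(s(0)<1\) forces \(s(0)=0\) and hence \(\Delta=\emptyset\). The extremal pair therefore has \(\tp(\vek a)=n\) and \(\indeg(\vek b)=0\). The constraint holds for all pairs if and only if it holds for this pair, that is, if and only if \(n<\delta\), i.e. \(\delta\ge n+1\). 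This gives the equivalence.

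For the second sentence, take \(\delta=n\). The constraint fails when \(\tp(\vek a)\ge\indeg(\vek b)+n\). Since \(\tp(\vek a)\le n\) and \(\indeg(\vek b)\ge0\), this forces \(\tp(\vek a)=n\) and \(\indeg(\vek b)=0\). By the uniqueness just noted, \(\vek a\) is the full simplex and \(\vek b\) is the void family, so exactly one pair fails. Equivalently, in matrix form, \(T_n(n)\) differs from \(\vek 1_n\vek u_n\) only in the entry \([n][-1]\), where it equals \(A_n[-1][\rho>0]\)-sum \(=0\) instead of \(1\).

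I expect no real obstacle. The only point needing care is the \emph{only if} direction, where one must make sure a single failing pair yields a strict inequality \(|\mathrm{CT}|<|\mathcal F_n|^r\). This is immediate from the inclusion together with the extension of the failing pair to a full tuple.
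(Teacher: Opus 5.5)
Your proof is correct and follows the paper's own argument. By Theorem~\ref{thm-chaining}, the count equals $|\mathcal F_n|^{\,r}$ exactly when the constraint $\tp(\vek s_{i-1})<\indeg(\vek s_i)+\delta_i$ is vacuous for every $i$, which happens iff $\delta_i>\max\tp-\min\indeg=n$; the two extremes are realised uniquely by the full simplex and the void family, and you also make explicit two steps the paper's one-line proof leaves implicit, namely the extension of a single failing pair to a non-chained $r$-tuple (for the ``only if'' direction) and the uniqueness of the extremal $f$-vectors (for the ``exactly one pair'' clause).
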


\begin{proof}
The constraint \(\tau<\rho+\delta\) is vacuous exactly when \(\delta>\max\tau-\min\rho\). Over
\(\mathcal F_n\), \(\max\tp=n\) (full simplex) and \(\min\indeg=0\) (void family), so the threshold is
\(\delta\ge n+1\); at \(\delta=n\) the single violating pair is the one realising both extremes.
\end{proof}

For \(r=2\) this gives \(|\mathcal F_n|^2-1\) at \(\delta=n\), i.e. \(99\), \(675\), \(9215\) for
\(n=3,4,5\), which matches the computed table.

\begin{proposition}
If all gaps vanish then \(T_n(0)\) is unipotent, meaning that \(T_n(0)-I\) is nilpotent, and
\(|\mathrm{FF}(n;0,\dots,0)|\) is a polynomial in
\(r\) of degree at most \(n+1\).
\label{prop-delta0}
\end{proposition}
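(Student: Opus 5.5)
The plan is to read off the shape of \(T_n(0)\) directly from the support of \(A_n\), and then use the standard fact that powers of a unipotent matrix are polynomial in the exponent.

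\textbf{Step 1: triangularity.} By Definition \ref{def-uT}, \(T_n(0)[\tau][\tau']=\sum_{\rho>\tau}A_n[\tau'][\rho]\). The matrix \(A_n\) is supported on \(\rho\le\tau'+1\), as noted after Definition \ref{def-A}. So a non-zero entry forces some \(\rho\) with \(\tau<\rho\le\tau'+1\), which gives \(\tau\le\tau'\). Hence \(T_n(0)\) is upper triangular in the ordering \(\tau=-1,0,\dots,n\).

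\textbf{Step 2: the diagonal.} At \(\tau'=\tau\) the only admissible value is \(\rho=\tau+1\), so the diagonal entry is \(A_n[\tau][\tau+1]\). This counts the \(\vek s\) with \(\tp(\vek s)=\tau\) and \(\indeg(\vek s)=\tau+1\). Such an \(\vek s\) has \(s(j)=\binom nj\) for \(j\le\tau\) and \(s(j)=0\) for \(j>\tau\), so it is the \(f\)-vector of the full \(\tau\)-skeleton of the simplex. At \(\tau=-1\) this is the void family, and at \(\tau=n\) it is the full simplex. So the entry is exactly \(1\) for every \(\tau\), which is visible in Example \ref{ex-uT} and in the superdiagonal of \eqref{eq:auto-13}. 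This step is the only one with real content, and it is immediate from the definitions. Consequently \(N=T_n(0)-I\) is strictly upper triangular of size \(n+2\), and \(N^{n+2}=0\).

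\textbf{Step 3: polynomiality.} By Theorem \ref{thm-main} with all gaps zero,
\[
  \lvert\mathrm{FF}(n;0^r)\rvert=\vek u_n\,(I+N)^{r-1}\,\vek 1_n=\sum_{k=0}^{n+1}\binom{r-1}{k}\,\vek u_n N^k\vek 1_n ,
\]
since \(N\) commutes with \(I\) and all terms with \(k\ge n+2\) vanish. Each \(\binom{r-1}{k}\) is a polynomial in \(r\) of degree \(k\), so the count is a polynomial in \(r\) of degree at most \(n+1\), valid for all \(r\ge1\).

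No step is a genuine obstacle. The thing to watch is the index bookkeeping at the extremes \(\tau=-1\) and \(\tau=n\), which the conventions \(\tp=-1\) and \(\indeg=n+1\) of Definition \ref{def-stats} handle uniformly.
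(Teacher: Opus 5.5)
Your proof is correct and follows the paper's own argument step for step: triangularity from the support condition \(\rho\le\tau'+1\), a unit diagonal \(A_n[\tau][\tau+1]=1\) witnessed by the full truncation at \(\tau\), and termination of the binomial expansion of \((I+N)^{r-1}\) because \(N^{n+2}=0\). Nothing is missing.
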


\begin{proof}
\(T_n(0)[\tau][\tau']\ne0\) requires some \(\rho>\tau\) with \(A_n[\tau'][\rho]>0\); since
\(\rho\le\tau'+1\) always, this forces \(\tau'\ge\tau\), so \(T_n(0)\) is triangular in the
\(\tp\)-ordering. Its diagonal entries are
\[
  T_n(0)[\tau][\tau]=A_n[\tau][\tau+1]=1 ,
\]
the unique
\(\vek s\) with \(\tp=\tau\) and \(\indeg=\tau+1\) being the full truncation at \(\tau\). So
\(N=T_n(0)-I\) is strictly triangular, hence nilpotent with \(N^{n+2}=0\); expanding
\((I+N)^{r-1}\) by the binomial theorem then terminates after \(n+2\) terms and is polynomial in
\(r\) of degree at most \(n+1\), the matrix size minus one. That expansion is
\eqref{eq:delta0-expansion}, where it is used again to pin the degree exactly.
\end{proof}

\begin{remark}
(\emph{What the achievable set looks like when all the gaps vanish.}) At \(\delta_i=0\) for every \(i\) the
companion paper's polytope is simply the dilation \(r\cdot\Koz(n)\) of the Kozlov simplex, and it is
natural to ask whether the achievable set inside it has a correspondingly simple description. It does,
and saying what it is explains why the counting nevertheless needs the machinery above.

Take \(\vek d=(0,\dots,0)\). Every coordinate satisfies \(0\le x(j)\le r\binom nj\), so division with
remainder by the capacity of a single component is available:
\begin{equation}\label{eq:delta0-digits}
  x(j)=q(j)\tbinom nj+y(j),\qquad 0\le y(j)<\tbinom nj,\qquad 0\le q(j)\le r ,
\end{equation}
uniquely, for \(0\le j\le n\). Call \(q(j)\) the \textbf{digit} of \(\vek x\) in degree \(j\) and \(y(j)\) the
remainder, the coordinates of a digit vector \(\vek q\) and a remainder vector \(\vek y\). Then \(\vek x\) lies in \(\mathrm{FF}(n;0,\dots,0)\) if and only if the digit sequence is
non-increasing,
\begin{equation}\label{eq:delta0-characterisation}
  q(0)\;\ge\;q(1)\;\ge\;\cdots\;\ge\;q(n) ,
\end{equation}
and the clamps \eqref{eq:water-filling-component} are \(f\)-vectors.
The clamps take a transparent form here. With all \(d_i=0\), equation \eqref{eq:def-Q} gives
\[
  \hat s_i(j)=\min\Bigl(\tbinom nj,\;\max\bigl(0,\;x(j)-(r-i)\tbinom nj\bigr)\Bigr),
\]
so
\begin{equation}\label{eq:delta0-clamp-digits}
  \hat s_i(j)=\begin{cases}
    \binom nj, & q(j)>r-i,\\
    y(j),       & q(j)=r-i,\\
    0,         & q(j)<r-i .
  \end{cases}
\end{equation}
So the digit sequence does all the bookkeeping: component \(i\) is full in the degrees where the digit
exceeds \(r-i\), zero where it falls below, and free exactly on the level set \(\lbrace j:q(j)=r-i\rbrace\).
Monotonicity of \(\vek q\) is precisely the statement that each of those three sets is an interval in
the right position, which is what the disjoint increasing windows in the proof of Lemma \ref{lem-explicit-inverse}
amount to in this case; and once \(\vek q\) is non-increasing, the only condition left on \(\vek x\) is
that on each constant block of \(\vek q\) the remainders \(y(j)\) continue the full skeleton below the
block to an \(f\)-vector. That is \textbf{one Kruskal--Katona condition per block}, and nothing else.

Two things follow, and they answer the question in opposite directions.

The \textbf{set} is simple: it is a digit expansion in the mixed radix \(\binom n0,\dots,\binom nn\) whose
digits must be non-increasing, cut down by one Kruskal--Katona condition per constant block. In
particular monotonicity of the digits is a necessary condition that can be read off \(\vek x\) by
inspection, with no enumeration.

The \textbf{count} is not simple, and the reason is visible in the same description: a block of the digit
sequence contributes the number of ways of continuing a full skeleton inside a window, which is
exactly what Linusson's refined \(E^n(\cdot,\cdot)\) counts, and consecutive blocks interact through
their shared boundary. That interaction is the transfer matrix. So the care in the proof of
Theorem \ref{thm-delta0-degrees} is not spent on the structure of the set at all; it is spent on showing
that the longest path in the digraph of Remark \ref{rem-delta0-digraph} is \textbf{unique}, which is what pins
the degree exactly rather than bounding it.

Monotonicity alone is not sufficient, and the smallest witness is small: at \(n=2\), \(r=1\) the
vector \(\vek x=(0,1,0)\) has digits \((0,0,0)\), which are non-increasing, but it is not an
\(f\)-vector, having a \(1\)-face and no empty face. The full characterisation
\eqref{eq:delta0-characterisation} was checked exhaustively against brute-force enumeration of the
sumset for \(1\le n\le4\) and \(1\le r\le3\), twelve cases, with exact agreement in every one. For
\(2\le n\le4\) the monotonicity condition alone admits strictly more vectors, for instance \(93\)
against \(\lvert\mathrm{FF}(3;0,0)\rvert=43\); at \(n=1\) the two coincide.
\label{rem-delta0-structure}
\end{remark}

The bound of Proposition \ref{prop-delta0} is attained, and the leading coefficient is explicit. Both
follow from two structural entries of \(A_n\).

\begin{lemma}
For \(0\le j\le n\) one has \(A_n[j][j+1]=1\), the unique witness being the full truncation at \(j\);
and for \(1\le j\le n\) one has \(A_n[j][j]=\binom nj-1\). Moreover \(A_n[0][0]=0\), and row \(\tau=-1\)
of \(A_n\) is the single entry \(A_n[-1][0]=1\).
\label{lem-A-structure}
\end{lemma}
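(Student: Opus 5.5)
The plan is to read off each entry directly from the definitions of \(\tp\) and \(\indeg\) in Definition \ref{def-stats}. Each claimed value of \(A_n\) counts elements of \(\mathcal F_n\) by their pair \((\tp,\indeg)\), so in every case we characterise the \(f\)-vectors in the class explicitly, using only downward closure and no Kruskal--Katona.

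\textbf{The entries \(A_n[j][j+1]=1\).} For \(0\le j\le n\), \(\indeg(\vek s)=j+1\) forces \(s(i)=\binom ni\) for all \(i\le j\). If \(j=n\), this is the full simplex, which has \(\tp=n\). If \(j<n\), then \(\tp(\vek s)=j\) forces \(s(i)=0\) for \(i>j\), so \(\vek s\) is the \(f\)-vector of the full \(j\)-skeleton of the simplex, i.e.\ the truncation \(\bigl(\binom n0,\dots,\binom nj,0,\dots,0\bigr)\). This vector is realised by the order ideal of all subsets of size \(\le j\). Its \(\indeg\) is \(j+1\) because \(s(j+1)=0<\binom n{j+1}\), and its \(\tp\) is \(j\) because \(s(j)=\binom nj>0\). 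Uniqueness is immediate, since every coordinate has been pinned.

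\textbf{The entries \(A_n[j][j]=\binom nj-1\) for \(1\le j\le n\).} Here \(\indeg=j\) means \(s(i)=\binom ni\) for \(i<j\) and \(s(j)<\binom nj\). Also \(\tp=j\) means \(s(j)>0\) and \(s(i)=0\) for \(i>j\). So \(\vek s=\bigl(\binom n0,\dots,\binom n{j-1},c,0,\dots,0\bigr)\) with \(1\le c\le\binom nj-1\), and distinct \(c\) give distinct vectors. The only thing to check is that each such vector lies in \(\mathcal F_n\). Take the full \((j-1)\)-skeleton together with any \(c\) of the \(j\)-subsets. This family is downward closed, because every proper subset of a \(j\)-set has size \(<j\) and already lies in the skeleton. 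This step is the only one needing a construction, and it is trivial because the lower levels are full.

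\textbf{The remaining entries.} \(A_n[0][0]=0\) because \(\tp(\vek s)=0\) means \(s(0)>0\), hence \(s(0)=1=\binom n0\), so \(\indeg\ge1\). For row \(\tau=-1\): \(\tp=-1\) means \(\vek s=\vek0\), the void family, whose \(\indeg\) is \(0\) since \(s(0)=0<1\). Thus that row is the single entry \(A_n[-1][0]=1\).

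I expect no real obstacle. The main care is at the boundary conventions: \(\indeg=n+1\) for the full simplex in the case \(j=n\) of the first claim, and the observation that \(\binom nn-1=0\) makes the second claim at \(j=n\) consistent with row \(n\) being concentrated at \(\rho=n+1\).
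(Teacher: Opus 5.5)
Your proposal is correct and follows essentially the same route as the paper: it pins every coordinate from the definitions of \(\tp\) and \(\indeg\), then realises each \(\tp=\indeg=j\) vector by adjoining \(c\) of the \(j\)-subsets to the full \((j-1)\)-skeleton. Your treatment of the boundary cases (\(j=n\), \(A_n[0][0]\), and row \(\tau=-1\)) is more explicit than the paper's one-line dismissal of them.
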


\begin{proof}
If \(\tp(\vek s)=j\) and \(\indeg(\vek s)=j+1\) then \(s(i)=\binom ni\) for \(i\le j\) and \(s(i)=0\)
for \(i>j\), so \(\vek s\) is the \(f\)-vector of the \(j\)-skeleton of the full simplex, and
conversely that vector has \(\tp=j\) and \(\indeg=j+1\); one witness exactly. If instead
\(\tp(\vek s)=\indeg(\vek s)=j\) then \(s(i)=\binom ni\) for \(i<j\), then
\(0<s(j)<\binom nj\) (positive because \(\tp=j\), deficient because \(\indeg=j\)), and
\(s(i)=0\) for \(i>j\). So \(\vek s\) is determined by
\begin{equation}\label{eq:A-structure-m}
  m=s(j)\;\in\;\bigl\lbrace1,\dots,\tbinom nj-1\bigr\rbrace ,
\end{equation}
and every such \(m\) occurs: adjoin any \(m\) of the \(j\)-subsets to the full \((j-1)\)-skeleton,
which stays an order ideal because every \((j-1)\)-subset is already present. The remaining two
statements are the void family and the two degenerate rows.
\end{proof}

\begin{theorem}
(\emph{The exact degrees at \(\delta=0\).}) Let \(n\ge1\). Then \(|\mathrm{FF}(n;0,\dots,0)|\) is a
polynomial in \(r\) of degree \textbf{exactly} \(n+1\), and \(|\mathrm{FF}_{\mathrm{pr}}(n;0,\dots,0)|\) one
of degree \textbf{exactly} \(n-1\), with leading coefficients
\begin{equation}\label{eq:delta0-leading}
  \frac{1}{(n+1)!}\prod_{j=0}^{n}\binom nj
  \qquad\text{and}\qquad
  \frac{1}{(n-1)!}\prod_{j=2}^{n}\binom nj
\end{equation}
respectively.
\label{thm-delta0-degrees}
\end{theorem}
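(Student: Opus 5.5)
The plan is to read off the \(\delta=0\) case of Theorem~\ref{thm-main} through the binomial expansion already used in Proposition~\ref{prop-delta0}. Write \(T_n(0)=I+N\) with \(N\) strictly upper triangular in the \(\tp\)-ordering \(-1,0,\dots,n\). Then
\[
  \lvert\mathrm{FF}(n;0^r)\rvert=\vek u_n(I+N)^{r-1}\vek 1_n=\sum_{k=0}^{n+1}\binom{r-1}{k}\,\vek u_nN^k\vek 1_n .
\]
Each \(\binom{r-1}{k}\) is a polynomial in \(r\) of degree exactly \(k\) with leading coefficient \(1/k!\). So the whole claim reduces to one fact: the top coefficient \(\vek u_nN^{n+1}\vek 1_n\) equals \(\prod_{j=0}^n\binom nj\), which is in particular nonzero.

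Since \(N\) is strictly triangular of size \(n+2\), the power \(N^{n+1}\) has a single possibly nonzero entry, in position \([-1][n]\). That entry is the product of the superdiagonal entries \(N[\tau][\tau+1]\) for \(\tau=-1,\dots,n-1\). Hence
\[
  \vek u_nN^{n+1}\vek 1_n=u_n[-1]\prod_{\tau=-1}^{n-1}T_n(0)[\tau][\tau+1],
\]
and \(u_n[-1]=1\) by Lemma~\ref{lem-A-structure}. By \eqref{eq:def-T} and the support bound \(\rho\le\tau'+1\), each factor is
\[
  T_n(0)[j-1][j]=A_n[j][j]+A_n[j][j+1],\qquad j=\tau+1 .
\]
Lemma~\ref{lem-A-structure} evaluates these sums:
\begin{itemize}
  \item for \(1\le j\le n\) the sum is \((\binom nj-1)+1=\binom nj\);
  \item for \(j=0\) the sum is \(0+1=1=\binom n0\).
\end{itemize}
This gives the first leading coefficient in \eqref{eq:delta0-leading}.

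For the proper count I would run the same argument with the proper transfer data \(A'_n\), \(\vek u'_n\), \(T'_n(0)\). I assume Theorem~\ref{thm-proper} gives \(\lvert\mathrm{FF}_{\mathrm{pr}}(n;\vek d)\rvert\) by the identical product formula over \(S_n\), which rests on Lemma~\ref{lem-lexproper}. The key difference is the effective index range.
\begin{itemize}
  \item Every \(\vek s\in S_n\) has \(s(0)=1\) and \(s(1)=n\), so \(\tp(\vek s)\ge1\) and \(\indeg(\vek s)\ge2\) when \(n\ge2\). The rows \(\tau=-1,0\) of \(A'_n\) therefore vanish, and \(T'_n(0)\) is effectively indexed by \(\tau\in\{1,\dots,n\}\), a set of size \(n\).
  \item \(T'_n(0)\) is again upper triangular, since the support bound \(\rho\le\tau'+1\) still holds.
  \item Its diagonal is still all ones: the full truncation at \(\tau\ge1\) is proper.
  \item Hence \(N'\) is nilpotent with \(N'^{\,n}=0\), and the degree is at most \(n-1\).
\end{itemize}
The top coefficient is \(u'_n[1]\) times the superdiagonal product \(\prod_{j=2}^n\bigl(A'_n[j][j]+A'_n[j][j+1]\bigr)\). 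Here \(u'_n[1]=1\), the only proper vector with \(\tp=1\) being \((1,n,0,\dots,0)\). The vectors counted in \(A_n[j][j]\) and \(A_n[j][j+1]\) contain the full \((j-1)\)-skeleton, so for \(j\ge2\) they are automatically proper. Therefore \(A'_n\) agrees with \(A_n\) on these entries, and each factor is \(\binom nj\). This gives \(\frac1{(n-1)!}\prod_{j=2}^n\binom nj\).

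The edge case \(n=1\) needs a separate check, because there \(\indeg\) can be \(2=n+1\). In that case \(S_1=\{(1,1)\}\), the count is the constant \(1\), and the formula gives degree \(0\) with coefficient \(1\), so it is consistent.

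The main obstacle is the bookkeeping for the proper case. One has to confirm that the proper transfer matrix of Theorem~\ref{thm-proper} really is \(A'_n\) restricted to \(\tp\in\{1,\dots,n\}\), with initial vector \(\vek u'_n\) and no extra boundary correction. One also has to confirm that the removed rows and columns do not leave a longer chain elsewhere. Both follow once triangularity and the range \(\tau\ge1\) are established, since a strictly triangular \(n\times n\) matrix admits only the one path of length \(n-1\).
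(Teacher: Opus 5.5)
Your proposal is correct and follows the paper's proof essentially step for step: the binomial expansion of \((I+N)^{r-1}\); the corner entry of \(N^{n+1}\) as the product of the Pascal superdiagonal supplied by Lemma~\ref{lem-A-structure}, together with \(u_n[-1]=1\); and, for the proper count, the reduction to the unipotent principal block on \(\lbrace1,\dots,n\rbrace\) with \(u'_n[1]=1\), which, as in the paper, is justified by the vanishing of \(\vek u'_n\) and of the columns \(\tau'=-1,0\) of \(T'_n(0)\). Two small points are superfluous: your separate check at \(n=1\) is already covered by the block argument, since that block is \(1\times1\) with \(N'=0\); and the qualifier ``when \(n\ge2\)'' on \(\indeg(\vek s)\ge2\) is not needed, because that bound holds for every \(n\ge1\).
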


\begin{proof}
Write \(N=T_n(0)-I\), strictly upper triangular in the \(\tp\)-ordering by Proposition
\ref{prop-delta0}. Expanding,
\begin{equation}\label{eq:delta0-expansion}
  \vek u_n\,T_n(0)^{r-1}\vek 1_n=\vek u_n\,(I+N)^{r-1}\vek 1_n
  =\sum_{k\ge0}\binom{r-1}{k}\,c_k,\qquad c_k=\vek u_n\,N^k\vek 1_n ,
\end{equation}
where \(\vek 1_n\) is, as in Definition \ref{def-uT}, the all-ones column vector, so that
\(c_k=\vek u_n\,N^k\vek 1_n\) is the sum of all the entries of the row vector \(\vek u_n\,N^k\). Each
\(\binom{r-1}{k}\) is a polynomial in \(r\) of degree exactly \(k\). As the \(\binom{r-1}{k}\)
are linearly independent, \textbf{the degree is the largest \(k\) with \(c_k\ne0\)}, and the leading
coefficient is that \(c_k\) divided by \(k!\).

Since \(A_n\) is supported on \(\rho\le\tau'+1\), the superdiagonal of \(N\) is, by
Lemma \ref{lem-A-structure},
\begin{equation}\label{eq:delta0-superdiag}
  N[\tau][\tau+1]=A_n[\tau+1][\tau+1]+A_n[\tau+1][\tau+2]
  =\Bigl(\tbinom n{\tau+1}-1\Bigr)+1=\tbinom n{\tau+1}
  \quad(-1\le\tau\le n-1),
\end{equation}
Pascal's row \(n\) exactly. For a strictly upper triangular matrix of size \(m\), the power
\(N^{m-1}\) has a single possibly non-zero entry, in the corner, equal to the product of the
superdiagonal, because only the full staircase path survives. Here \(m=n+2\), so
\begin{equation}\label{eq:delta0-corner}
  c_{n+1}=u_n[-1]\cdot\prod_{j=0}^{n}\tbinom nj ,
\end{equation}
and \(u_n[-1]=1\) because the void family is the unique
\(\vek s\) with \(\tp=-1\). This is non-zero, which gives the improper claim.

For the proper variant the count is \(\vek u'_n\,T'_n(0)^{r-1}\vek 1_n\) by Theorem \ref{thm-proper}
below, with \(A'_n\), \(\vek u'_n\) and \(T'_n(\delta)\) as defined there, and one adjustment is
needed: the proper matrix is \textbf{not} unipotent on the full index set, its columns \(\tau'=-1\) and \(\tau'=0\) vanishing identically. But \(\vek u'_n\) vanishes on
those two coordinates as well, so every product \(\vek u'_n\,T'_n(0)^k\) agrees with the corresponding
product over the principal block on \(\tau\in\lbrace1,\dots,n\rbrace\), and \textbf{that} block is
unipotent. On it the superdiagonal is again \eqref{eq:delta0-superdiag}, with column indices
\(\ge2\) so that the two zeroed columns are invisible, and \(u'_n[1]=A_n[1][2]=1\), the unique proper
vector with \(\tp=1\) being \((1,n,0,\dots,0)\). The block has size \(n\), so
\(c'_{n-1}=\prod_{j=2}^{n}\binom nj\ne0\).
\end{proof}

\begin{mexample}
The polynomials themselves are obtained by interpolating Theorem \ref{thm-main} at enough values of
\(r\). They are listed below by coefficient, ascending in \(r\), which is what lets \(n=7\) fit on the
page; so the row for \(n\) reads \(\sum_kb_kr^k\) from left to right.
\begingroup\footnotesize
\begin{center}
\begin{tabular}{@{}c*{9}{r}@{}}
  \toprule
  \multicolumn{10}{@{}l}{\(\lvert\mathrm{FF}(n;0^r)\rvert=\sum_kb_kr^k\), degree \(n+1\)}\\
  \(n\) & \(b_0\) & \(b_1\) & \(b_2\) & \(b_3\) & \(b_4\) & \(b_5\) & \(b_6\) & \(b_7\) & \(b_8\)\\
  \midrule
  1 & 1 & 3/2 & 1/2 & & & & & &\\
  2 & 1 & 13/6 & 3/2 & 1/3 & & & & &\\
  3 & 1 & 37/12 & 29/8 & 23/12 & 3/8 & & & &\\
  4 & 1 & 131/30 & 8 & 47/6 & 4 & 4/5 & & &\\
  5 & 1 & 25/4 & 1219/72 & 79/3 & 1915/72 & 185/12 & 125/36 & &\\
  6 & 1 & 2929/420 & 929/24 & 269/3 & 2869/24 & 8863/60 & 469/4 & 225/7 &\\
  \addlinespace
  \multicolumn{10}{@{}l}{\(\lvert\mathrm{FF}_{\mathrm{pr}}(n;0^r)\rvert=\sum_kb_kr^k\), degree \(n-1\)}\\
  \(n\) & \(b_0\) & \(b_1\) & \(b_2\) & \(b_3\) & \(b_4\) & \(b_5\) & & &\\
  \midrule
  1 & 1 & & & & & & & &\\
  2 & 1 & 1 & & & & & & &\\
  3 & 1 & 5/2 & 3/2 & & & & & &\\
  4 & 1 & 4 & 7 & 4 & & & & &\\
  5 & 1 & 20/3 & 79/6 & 85/3 & 125/6 & & & &\\
  6 & 1 & 10/3 & 315/4 & 1/6 & 595/4 & 225 & & &\\
  \bottomrule
\end{tabular}
\captionof{table}{The coefficients \(b_k\) of \(\lvert\mathrm{FF}(n;0^r)\rvert=\sum_kb_kr^k\), ascending in \(r\).}
\label{tab-zero-gap-coeffs}
\end{center}
\endgroup
At \(n=7\) the coefficients no longer fit across a page, so the two polynomials are written out
instead, broken after the \(r^4\) term:
\begin{align*}
  \lvert\mathrm{FF}(7;0^r)\rvert\;=\;&
    \tfrac{84035}{128}r^{8}+\tfrac{868819}{480}r^{7}+\tfrac{2873311}{2880}r^{6}
    +\tfrac{42091}{80}r^{5}+\tfrac{1452955}{1152}r^{4}\\
  &{}+\tfrac{163871}{480}r^{3}\;\boldsymbol{-}\;\tfrac{267763}{1440}r^{2}
    +\tfrac{2117}{40}r+1 ,\\[0.6ex]
  \lvert\mathrm{FF}_{\mathrm{pr}}(7;0^r)\rvert\;=\;&
    \tfrac{84035}{16}r^{6}\;\boldsymbol{-}\;\tfrac{91581}{80}r^{5}
    \;\boldsymbol{-}\;\tfrac{81977}{48}r^{4}\\
  &{}+\tfrac{170191}{48}r^{3}\;\boldsymbol{-}\;\tfrac{3274}{3}r^{2}
    +\tfrac{3187}{60}r+1 .
\end{align*}
Every one of these polynomials has constant term \(1\). The reason is not that an empty sum has
one element, since a polynomial fitted at \(r\ge1\) owes nothing to \(r=0\); it is
\eqref{eq:normalform-u} below, which writes the improper count as
\(e_{-1}^{\!\top}B_n^{\,r}\,\vek1_n\) with \(B_n=T_n(0)\). As \(B_n\) is unipotent this is a
polynomial in \(r\) for every \(r\ge0\), and at \(r=0\) it is \(e_{-1}^{\!\top}\vek1_n=1\). In the
proper case row \(\tau=1\) of \(T'_n(0)\) is \(\vek u'_n\), so the count is
\(e_1^{\!\top}T'_n(0)^{r}\,\vek1_n\) and the same argument runs on the block
\(\lbrace1,\dots,n\rbrace\). The degrees and the leading coefficients agree with
\eqref{eq:delta0-leading} in all fourteen cases.

One feature is worth flagging, because it is easy to assume otherwise: \textbf{the coefficients are not all
positive}, and the minus signs of the two \(n=7\) polynomials are set in bold above to make them
visible. They first appear at \(n=7\), and there in
\textbf{both} variants, at \(r^2\) in the improper polynomial and at \(r^2\), \(r^4\) and \(r^5\) in the
proper one. So no positivity argument can be read into Theorem \ref{thm-delta0-degrees}. Up to \(n=6\)
every coefficient is positive, though the margin is thin: the \(r^3\) coefficient of the proper
polynomial at \(n=6\) is \(1/6\).

These polynomials were computed by exact rational interpolation of Theorem \ref{thm-main} at
\(r=1,\dots,\deg+1\), and each was then checked by evaluating the fitted polynomial at four further
values of \(r\) against the matrix product; all of those agreed.
\label{tab-delta0-polys}
\end{mexample}

\begin{remark}
(\emph{Both degrees are longest paths.}) Under one reading of Theorem \ref{thm-delta0-degrees} its two
exponents can be read off a picture, and that reading explains why the one step of the proof that
needed care is the one that did.

Let \(G\) be the weighted digraph on the vertex set \(\lbrace-1,0,\dots,n\rbrace\) of \(\tp\)-values
whose adjacency matrix is \(N=T_n(0)-I\). Since \(T_n(0)\) is upper triangular with unit diagonal
(Proposition \ref{prop-delta0}), \(N\) is strictly upper triangular: the arcs run \(\tau\to\tau'\) only
for \(\tau'>\tau\), so \(G\) is a directed acyclic graph, and the arc \(\tau\to\tau+1\) carries weight \(\binom n{\tau+1}\) by
\eqref{eq:delta0-superdiag}.

Now \((N^k)[\tau][\tau']\) is the total weight of the \textbf{paths} of length exactly \(k\) from \(\tau\) to
\(\tau'\), paths and not walks because \(G\) is acyclic. So the expansion
\eqref{eq:delta0-expansion} reads
\begin{equation}\label{eq:delta0-paths}
  \lvert\mathrm{FF}(n;0^r)\rvert=\sum_{k\ge0}\binom{r-1}k\cdot\bigl(\text{total weight of paths of length }k\text{ in }G\bigr),
\end{equation}
the paths being weighted by \(\vek u_n\) at their start and summed over their end. Since
\(\binom{r-1}k\) has degree exactly \(k\), this gives at once:

\begin{itemize}
\item the \textbf{\textbf{degree}} is the length of the longest path in \(G\) that \(\vek u_n\) reaches;
\item the \textbf{\textbf{leading coefficient}} is the total weight of those longest paths, divided by that length's
factorial.
\end{itemize}

Both halves of the theorem are now visible. In the improper case a path of length \(n+1\) must pass
through all \(n+2\) vertices, so it is the staircase \(-1\to0\to\cdots\to n\), unique, of weight
\(\prod_{j=0}^n\binom nj\): degree \(n+1\), leading coefficient
\(\prod_{j=0}^n\binom nj\big/(n+1)!\). In the proper case the vertices \(\tau=-1\) and \(\tau=0\) drop
out (\(\vek u'_n\) vanishes on them, and columns \(-1\) and \(0\) of \(T'_n(0)\) vanish too, so nothing
reaches them either), leaving a DAG on \(\lbrace1,\dots,n\rbrace\) whose longest path
\(1\to2\to\cdots\to n\) has length \(n-1\) and weight \(\prod_{j=2}^n\binom nj\).

This also says which step of the proof carries the content, and it is the same conclusion
Remark \ref{rem-delta0-structure} reaches from the other side. ``Only the full staircase survives'', the
corner evaluation, is the assertion that \(G\) has exactly one longest path; everything else is the
binomial expansion. It also explains the shape of the answer: a product of binomial coefficients
appears because a longest path is forced to take every arc of the staircase, and a factorial divides
it because \(\binom{r-1}{k}\) does.

The picture is small enough to draw. At \(n=3\), reading \(T_3(0)\) off Example \ref{ex-uT} and
subtracting the identity gives
\[
  N=T_3(0)-I=\begin{pmatrix}
    0&1&3&4&1\\ 0&0&3&4&1\\ 0&0&0&3&1\\ 0&0&0&0&1\\ 0&0&0&0&0
  \end{pmatrix},
\]
whose digraph is Figure \ref{fig:delta0-digraph}. Its superdiagonal weights are
\(1,3,3,1\), which is Pascal's row \(n=3\) as \eqref{eq:delta0-superdiag} says. The longest path
in the whole graph is the staircase \(-1\to0\to1\to2\to3\), of length \(4=n+1\) and weight
\(1\cdot3\cdot3\cdot1=9\); it is the only path of that length, since a path of length \(4\) on
five vertices must visit all of them. So the improper polynomial has degree \(4\) and leading
coefficient \(9/4!=3/8\). Deleting the two vertices \(-1\) and \(0\), which the proper case
does, leaves the staircase \(1\to2\to3\) as the longest path, of length \(2=n-1\) and weight
\(3\cdot1=3\), so the proper polynomial has degree \(2\) and leading coefficient \(3/2!=3/2\).
Both agree with the \(n=3\) lines of Example \ref{tab-delta0-polys}, where the polynomials are
\(\tfrac38r^4+\cdots\) and \(\tfrac32r^2+\cdots\).

\begin{figure}[htbp]
\centering
\begin{tikzpicture}[shorten >=1pt,
  every node/.style={circle,draw,minimum size=7mm,inner sep=0pt,font=\small},
  wt/.style={draw=none,shape=rectangle,inner sep=1pt,font=\footnotesize}]
  \node (v0) at (0,0)   {$-1$};
  \node (v1) at (2.1,0) {$0$};
  \node (v2) at (4.2,0) {$1$};
  \node (v3) at (6.3,0) {$2$};
  \node (v4) at (8.4,0) {$3$};
  \draw[->,very thick] (v0) -- node[wt,above] {$1$}  (v1);
  \draw[->,very thick] (v1) -- node[wt,above] {$3$}  (v2);
  \draw[->,very thick] (v2) -- node[wt,above] {$3$}  (v3);
  \draw[->,very thick] (v3) -- node[wt,above] {$1$}  (v4);
  \draw[->] (v0) to[bend right=22] node[wt,below] {$3$} (v2);
  \draw[->] (v0) to[bend right=30] node[wt,below] {$4$} (v3);
  \draw[->] (v0) to[bend right=36] node[wt,below] {$1$} (v4);
  \draw[->] (v1) to[bend left=22]  node[wt,above] {$4$} (v3);
  \draw[->] (v1) to[bend left=30]  node[wt,above] {$1$} (v4);
  \draw[->] (v2) to[bend right=22] node[wt,below] {$1$} (v4);
\end{tikzpicture}
\caption{The weighted acyclic digraph \(G\) of Remark~\ref{rem-delta0-digraph} at \(n=3\), with
adjacency matrix \(N=T_3(0)-I\). Nodes are values of \(\tp\); the arc \(\tau\to\tau'\) carries
weight \(N[\tau][\tau']\). The staircase arcs, drawn heavy, carry Pascal's row \(1,3,3,1\) and
form the unique longest path: its length \(4\) is the degree of
\(\lvert\mathrm{FF}(3;0^r)\rvert\) in \(r\) and its weight \(9\), divided by \(4!\), is the
leading coefficient \(3/8\). The proper count lives on the sub-digraph spanned by
\(\lbrace1,2,3\rbrace\).}
\label{fig:delta0-digraph}
\end{figure}
\label{rem-delta0-digraph}
\end{remark}

\begin{remark}
The improper leading coefficient is \(\prod_{j=0}^{n}\binom nj\big/(n+1)!\), the numerator being the
product of a whole row of Pascal's triangle (OEIS A001142). At \(n=2\) the two coefficients are
\(2/3!=1/3\) and \(1/1!=1\), matching the improper values \(14,30,55,91\) for \(r=2,\dots,5\), a
cubic with third difference \(2\), and the proper values
\(|\mathrm{FF}_{\mathrm{pr}}(2;0^r)|=r+1\). Theorem \ref{thm-delta0-degrees} was verified against
independent computation of both counts for \(n=2,\dots,8\) and \(r\) up to \(n+4\); see
Section \ref{sec-verification}.
\label{rem-delta0-degrees}
\end{remark}
\section{The proper variant}
\label{sec-proper}
The companion paper's polytope is built from \(S_n\), not \(\mathcal F_n\): its Hilbert functions come
from \textbf{proper} monomial submodules. The count restricts accordingly, but the restriction is not free:
it costs work in the proof below, and it costs a symmetry outright (Remark
\ref{rem-proper-not-reversible}).
The argument of Theorem \ref{thm-chaining} goes through the lex representative, so what must be checked
is that the lex representative of a properly achievable Hilbert function is itself proper.

\begin{lemma}
Let \(\vek x=\sum_i\shift^{d_i}(\vek s_i)\) with every \(\vek s_i\in S_n\). Then the unique lex submodule
\(L\) with \(\HF(F/L)=\vek x\) is proper.
\label{lem-lexproper}
\end{lemma}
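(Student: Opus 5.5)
The plan is to identify the component tuple of \(L\) with the output of the explicit inverse \(\Phi_{\vek d}\) of Lemma \ref{lem-explicit-inverse}, and then to read properness straight off the clamp formula \eqref{eq:water-filling-component}. Write \(L=\bigoplus_iL_ig_i\). By step (c) of the proof of Theorem \ref{thm-chaining}, the tuple \((\vek{\hat s}_1,\dots,\vek{\hat s}_r)\) with \(\vek{\hat s}_i=\HF\bigl((\Ext)/L_i\bigr)\) lies in \(\mathrm{CT}(n;\vek d)\). It sums to \(\vek x\) under \(\Sigma_{\vek d}\), so part (1) of Lemma \ref{lem-explicit-inverse} gives \((\vek{\hat s}_1,\dots,\vek{\hat s}_r)=\Phi_{\vek d}(\vek x)\).

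Next I translate properness into the components. A monomial ideal \(L_i\) has no minimal generator of degree \(\le1\) if and only if \((L_i)_0=(L_i)_1=0\). Indeed, if \(L_i\) has no generator in degree \(0\), then \((L_i)_0=0\), and then every monomial in \((L_i)_1\) would itself be a minimal generator. So \(L\) is proper exactly when \(\hat s_i(0)=1\) and \(\hat s_i(1)=n\) for every \(i\), that is, \(\hat s_i(m)=\binom nm\) for \(m\in\{0,1\}\). The proof therefore reduces to computing these coordinates of \(\Phi_{\vek d}(\vek x)\).

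Fix \(i\), fix \(m\in\{0,1\}\), and put \(j=m+d_i\) and \(C_i(j)=\sum_{k>i}\binom n{j-d_k}\). By \eqref{eq:def-Q},
\[
  \hat s_i(m)=\max\bigl(0,\,x(j)-C_i(j)\bigr)-\max\Bigl(0,\,x(j)-C_i(j)-\tbinom nm\Bigr).
\]
This equals \(\binom nm\) precisely when \(x(j)\ge\sum_{k\ge i}\binom n{j-d_k}\).

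To prove that bound I use the given decomposition \(x(j)=\sum_k s_k(j-d_k)\) with every \(\vek s_k\in S_n\). For \(k\ge i\), sortedness gives \(j-d_k\le j-d_i=m\le1\). A proper \(f\)-vector satisfies \(s_k(l)=\binom nl\) for every \(l\le1\), with both sides equal to \(0\) when \(l<0\). Hence \(\sum_{k\ge i}s_k(j-d_k)=\sum_{k\ge i}\binom n{j-d_k}\), and the terms with \(k<i\) are non-negative. This gives the bound and so completes the argument.

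The only delicate point is the first step. One must check that the tuple of \(L\) really is the chained tuple, so that the clamp formula applies, and not merely some decomposition of \(\vek x\). That is exactly what part (1) of Lemma \ref{lem-explicit-inverse} supplies, together with Theorem \ref{thm-chaining}(c). Note also that sortedness of \(\vek d\) is used essentially in the last step, through \(j-d_k\le m\) for \(k\ge i\).
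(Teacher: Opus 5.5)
Your argument is correct, but it reaches the key inequality by a different road from the paper. The paper never passes through chained tuples or the clamp. It reads properness directly off Amata and Crupi's Definition~2.6, where \(\mathrm{Mon}_j(L)\) is the initial segment of \(\mathrm{Mon}_j(F)\) of size \(C_j=\dim F_j-x(j)\), filling \(g_1,g_2,\dots\) in order. So \(L\) is proper exactly when, in the degrees \(j=d_i\) and \(j=d_i+1\), that segment stops before \(g_i\)'s slot, i.e.\ \(C_j\le\sum_{k<i}\binom n{j-d_k}\). The paper obtains this from the same observation you use: for \(k\ge i\) a proper \(\vek s_k\) has no deficiency in those degrees. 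Since \(C_j=\sum_k\binom n{j-d_k}-x(j)\), that inequality is literally your \(x(j)\ge\sum_{k\ge i}\binom n{j-d_k}\). The water-filling of the quotient from the last component is the same thing as the lex segment of the submodule filling from the first.

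The paper's route is more economical. It needs only Definition~2.6, and neither the equivalence of Definitions~2.6 and~2.7 behind Theorem~\ref{thm-chaining}(c) nor Lemma~\ref{lem-explicit-inverse}. Your route separates the algebra from the combinatorics. The one algebraic step is identifying \(L\)'s component tuple with \(\Phi_{\vek d}(\vek x)\), which is the delicate point, as you say, and is not circular, since neither cited result depends on the present lemma. After that, the rest is a purely combinatorial fact about the clamp: \(\hat s_i(0)=1\) and \(\hat s_i(1)=n\) for every \(i\) whenever \(\vek x\) is a sum of shifted proper vectors. This needs no knowledge that the clamps are \(f\)-vectors. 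Both arguments use sortedness at the same point, \(j-d_k\le1\) for \(k\ge i\).
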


\begin{proof}
By Definition 2.6, \(\mathrm{Mon}_j(L)\) is the initial segment of \(\mathrm{Mon}_j(F)\) of size
\(C_j=\dim F_j-x(j)\), and the segment fills the slots \(g_1,g_2,\dots\) in order. So \(L\) is proper
exactly when, for every \(i\), the segment does not reach \(g_i\)'s slot in the two module degrees
\(j=d_i\) and \(j=d_i+1\), that is, when \(C_j\le\sum_{k<i}\binom n{j-d_k}\) there. Writing
\(\binom nm=0\) and \(s_k(m)=0\) outside \(0\le m\le n\),
\[
  C_j=\sum_k\Bigl(\tbinom n{j-d_k}-s_k(j-d_k)\Bigr).
\]
For \(k\ge i\) and \(j\le d_i+1\) we have \(j-d_k\le d_i+1-d_k\le1\), and a proper \(\vek s_k\) has
\(s_k(0)=1=\binom n0\) and \(s_k(1)=n=\binom n1\), while both vanish in negative degrees; so every
such summand is zero. Hence
\[
  C_j\;=\;\sum_{k<i}\Bigl(\tbinom n{j-d_k}-s_k(j-d_k)\Bigr)\;\le\;\sum_{k<i}\tbinom n{j-d_k} ,
\]
which is exactly the capacity of the slots preceding \(g_i\).
\end{proof}

\begin{theorem}
Fix \(n\ge1\), \(r\ge1\) and a sorted \(\vek d\). Let \(A'_n\) be \(A_n\) with the columns \(\rho=0\) and
\(\rho=1\) set to zero, \(\vek u'_n\) its row sums, and \(T'_n(\delta)\) built from \(A'_n\) as in
Definition \ref{def-uT}. Then
\begin{equation}\label{eq:auto-16}
  \lvert\mathrm{FF}_{\mathrm{pr}}(n;\vek d)\rvert
  \;=\;\lvert\mathrm{CT}_{\mathrm{pr}}(n;\vek d)\rvert
  \;=\;\vek u'_n\cdot T'_n(\delta_2)\cdots T'_n(\delta_r)\cdot\vek 1_n .
\end{equation}
\label{thm-proper}
\end{theorem}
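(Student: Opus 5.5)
The proof runs in parallel with Theorems \ref{thm-chaining} and \ref{thm-main}, with Lemma \ref{lem-lexproper} supplying the one new ingredient. It splits into two equalities, which I take in order.

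For the first equality, \(\lvert\mathrm{FF}_{\mathrm{pr}}(n;\vek d)\rvert=\lvert\mathrm{CT}_{\mathrm{pr}}(n;\vek d)\rvert\), I restrict the bijection \eqref{eq:sigma-restricted}. Since \(\mathrm{CT}_{\mathrm{pr}}(n;\vek d)\subseteq\mathrm{CT}(n;\vek d)\), the map \(\Sigma_{\vek d}\) is injective on \(\mathrm{CT}_{\mathrm{pr}}(n;\vek d)\). Its image lies in \(\mathrm{FF}_{\mathrm{pr}}(n;\vek d)\) by the definition \eqref{eq:def-FFpr}.

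The only real point is surjectivity. Take \(\vek x\in\mathrm{FF}_{\mathrm{pr}}(n;\vek d)\). By Theorem \ref{thm-chaining}, specifically steps (c) and (d) of its proof, there is a unique lex submodule \(L=\bigoplus_iL_ig_i\) with \(\HF(F/L)=\vek x\). Its component tuple \((\vek s_1,\dots,\vek s_r)\), with \(\vek s_i=\HF(\Ext/L_i)\), is chained and sums to \(\vek x\).

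Lemma \ref{lem-lexproper} says that \(L\) is proper, so no \(L_i\) has a minimal generator of degree \(\le1\). I then have to translate this into \(\vek s_i\in S_n\). Properness gives \((L_i)_0=0\) and \((L_i)_1=0\): a nonzero element in degree \(0\) or \(1\) would force a minimal generator of degree \(\le1\), since there is nothing in lower degree to generate it. Hence \(s_i(0)=1\) and \(s_i(1)=n\), so the tuple lies in \(\mathrm{CT}_{\mathrm{pr}}(n;\vek d)\). This gives surjectivity.

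This translation step, together with the reliance on Lemma \ref{lem-lexproper}, is the only non-formal content. I expect it to be the main obstacle, though it is already discharged by that lemma. One subtlety to flag: for the case \(n\ge1\) in degree \(0\), \(s(0)=1\) means \(L_i\ne\Ext\), so the void family is excluded, as it should be.

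For the second equality, I first identify \(S_n\) by the statistic \(\indeg\). For \(\vek s\in\mathcal F_n\), the condition \(s(0)=1,\ s(1)=n\) says exactly that \(\indeg(\vek s)\ge2\). Hence \(A'_n\), obtained from \(A_n\) by zeroing the columns \(\rho=0,1\), is the joint \((\tp,\indeg)\) distribution of \(S_n\).

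From here I rerun the proof of Theorem \ref{thm-main} verbatim with \(\mathcal F_n\) replaced by \(S_n\). The chaining constraint \eqref{eq:chaining-condition} is still nearest-neighbour, and it involves \(\vek s_{i-1}\) only through \(\tp\) and \(\vek s_i\) only through \((\tp,\indeg)\). Summing over \(\vek s_1,\dots,\vek s_r\) in order and grouping by class, the row sums \(\vek u'_n\) count the admissible first entries, and \(T'_n(\delta_i)[\tau][\tau']=\sum_{\rho>\tau-\delta_i}A'_n[\tau'][\rho]\) counts the admissible \(i\)-th entries. This yields \eqref{eq:auto-16}.
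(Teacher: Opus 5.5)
Your proof is correct and follows the paper's argument: surjectivity onto \(\mathrm{FF}_{\mathrm{pr}}(n;\vek d)\) comes from the lex representative together with Lemma \ref{lem-lexproper}, \(S_n\) is identified with \(\lbrace\indeg\ge2\rbrace\) (your degree-\(0\) and degree-\(1\) translation matches the paper's use of Lemma \ref{lem-dictionary}), and the factorisation is the rearrangement of Theorem \ref{thm-main} rerun over \(S_n\). The only difference is presentation: you restrict the bijection \(\Sigma_{\vek d}\colon\mathrm{CT}(n;\vek d)\to\mathrm{FF}(n;\vek d)\) to \(\mathrm{CT}_{\mathrm{pr}}(n;\vek d)\), whereas the paper matches the proper lex submodules bijectively with both sets.
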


\begin{proof}
\(S_n=\lbrace \vek s\in\mathcal F_n:\indeg(\vek s)\ge2\rbrace\), which is what deleting the two columns
implements. As in Theorem \ref{thm-chaining}(d), \(L\mapsto\HF(F/L)\) is injective on lex submodules; it
carries proper lex submodules into \(\mathrm{FF}_{\mathrm{pr}}(n;\vek d)\), since a proper lex submodule is a
proper monomial submodule, and onto it: given \(\vek x\in\mathrm{FF}_{\mathrm{pr}}(n;\vek d)\), Theorem 4.2
(a) \(\Leftrightarrow\) (c) supplies the lex representative and Lemma \ref{lem-lexproper} shows it is
proper. Definition 2.6 \(\Leftrightarrow\) 2.7 with Lemma \ref{lem-dictionary}, restricted componentwise
to lex ideals of initial degree \(\ge2\), identifies the proper lex submodules with the proper-chained
tuples. The factorisation is then the rearrangement of Theorem \ref{thm-main} verbatim.
\end{proof}

\begin{proposition}
For \(n\ge1\), \(|\mathrm{FF}_{\mathrm{pr}}(n;\vek d)|=|S_n|^{\,r}\) if and only if every gap satisfies
\(\delta_i\ge n-1\).
\label{prop-proper-threshold}
\end{proposition}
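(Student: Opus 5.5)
By Theorem \ref{thm-proper}, \(\lvert\mathrm{FF}_{\mathrm{pr}}(n;\vek d)\rvert=\lvert\mathrm{CT}_{\mathrm{pr}}(n;\vek d)\rvert\). The summation map \(\Sigma_{\vek d}\) is surjective from \(S_n^{\,r}\) onto \(\mathrm{FF}_{\mathrm{pr}}(n;\vek d)\) and injective on \(\mathrm{CT}_{\mathrm{pr}}(n;\vek d)\). Hence \(\lvert\mathrm{FF}_{\mathrm{pr}}(n;\vek d)\rvert=\lvert S_n\rvert^{r}\) holds if and only if every tuple in \(S_n^{\,r}\) is proper-chained, that is, if and only if the proper chaining condition is vacuous. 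I would therefore argue exactly as in Proposition \ref{prop-threshold}, with \(S_n\) in place of \(\mathcal F_n\). The constraint at position \(i\) is \(\tp(\vek s_{i-1})<\indeg(\vek s_i)+\delta_i\) with both entries ranging independently over \(S_n\). Since the constraints at different \(i\) involve disjoint variables in the sense that each can be violated independently of the others (the remaining entries may be chosen freely in \(S_n\)), the whole set is vacuous if and only if, for each \(i\),
\[
  \delta_i\;>\;\max_{\vek s\in S_n}\tp(\vek s)\;-\;\min_{\vek s\in S_n}\indeg(\vek s).
\]

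Next I compute the two extremes over \(S_n\). The full simplex lies in \(S_n\) and has \(\tp=n\), so the maximum of \(\tp\) is \(n\). For the minimum of \(\indeg\): every proper vector has \(s(0)=1\) and \(s(1)=n\), so \(\indeg\ge2\). The vector \((1,n,0,\dots,0)\) is proper and has \(\indeg=2\) when \(n\ge2\), since \(0<\binom n2\). The threshold is therefore \(\delta_i>n-2\), i.e.\ \(\delta_i\ge n-1\). At \(\delta_i=n-2\) the pair (full simplex, \((1,n,0,\dots,0)\)) violates the constraint, which gives the ``only if'' direction. For this I need \(\lvert\mathrm{CT}_{\mathrm{pr}}\rvert<\lvert S_n\rvert^r\), obtained by completing the pair to a full tuple with arbitrary entries of \(S_n\) elsewhere.

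The one point needing care, and the main obstacle, is the degenerate case \(n=1\). There \(S_1=\lbrace(1,1)\rbrace\) is the full simplex alone, with \(\indeg=2=n+1\) and \(\tp=1\). The constraint reads \(1<2+\delta_i\), which is always true. So the equality holds for every \(\delta_i\ge0=n-1\), and the claimed equivalence still holds because all gaps are nonnegative. I would treat \(n=1\) separately in this way and use the generic computation for \(n\ge2\). In terms of the matrix form, this is the statement that \(T'_n(\delta)\) becomes the rank-one matrix whose rows are all \(\vek u'_n\) (on the relevant support) exactly when \(\delta\ge n-1\). I would mention this only as a consistency check, not as the proof.
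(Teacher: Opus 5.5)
Your proposal is correct and is the paper's argument. Theorem \ref{thm-proper} reduces the count to \(\mathrm{CT}_{\mathrm{pr}}(n;\vek d)\subseteq S_n^{\,r}\), and the chaining constraint is vacuous exactly when each \(\delta_i>\max_{S_n}\tp-\min_{S_n}\indeg=n-2\); below that threshold, the full simplex followed by \((1,n,0,\dots,0)\) gives a violating pair. Your separate treatment of \(n=1\) is harmless but unnecessary: there the full simplex \((1,1)\) itself has \(\indeg=2\), so \(\min\indeg=2\) over \(S_n\) holds for every \(n\ge1\).
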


\begin{proof}
As in Proposition \ref{prop-threshold}, with \(\max\tp=n\) and \(\min\indeg=2\) over \(S_n\).
\end{proof}

\begin{remark}
Two sequences recorded in earlier work on this project are reproduced exactly by
Theorem \ref{thm-proper} and extended. For \(r=2\), \(n=2,\dots,9\):
\begin{equation}\label{eq:auto-17}
  \delta=0:\quad 3,\ 12,\ 69,\ 627,\ 9904,\ 296289,\ 18009389,\ 2346218172,
\end{equation}
\begin{equation}\label{eq:auto-18}
  \delta=1:\quad 4,\ 22,\ 162,\ 1858,\ 38232,\ 1571774,\ 138881518,\ 27635025278 .
\end{equation}
The prefixes through \(n=6\) are the independently enumerated ones. Neither sequence, nor \(|S_n|\)
itself, appears to be in the OEIS (searched October 2026).
\label{rem-proper-sequences}
\end{remark}
\section{Growth}
\label{sec-growth}
\begin{lemma}
(\emph{The support of the transfer matrix.}) Let \(n\ge1\) and \(\delta\ge0\). For all \(\tau,\tau'\) in
the range \eqref{eq:def-uT-range},
\begin{equation}\label{eq:transfer-support}
  T_n(\delta)[\tau][\tau']>0\quad\Longleftrightarrow\quad\tau'\ge\tau-\delta .
\end{equation}
In \(T'_n(\delta)\) the two columns \(\tau'=-1\) and \(\tau'=0\) vanish identically, and
\eqref{eq:transfer-support} holds as stated for every \(\tau'\in\lbrace1,\dots,n\rbrace\).
\label{lem-transfer-support}
\end{lemma}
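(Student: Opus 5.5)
The plan is to exhibit, for each admissible pair, one explicit \(\vek s\) counted by the entry, and to derive the reverse implication from the support bound \(\rho\le\tau'+1\). By \eqref{eq:def-T}, \(T_n(\delta)[\tau][\tau']>0\) holds exactly when there is some \(\rho>\tau-\delta\) with \(A_n[\tau'][\rho]>0\).

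For the implication from left to right, suppose such a \(\rho\) exists. Since \(A_n\) is supported on \(\rho\le\tau'+1\) (Definition \ref{def-A}), we get \(\tau-\delta<\rho\le\tau'+1\). Both sides are integers, so \(\tau-\delta\le\tau'\), which is the claim. No further combinatorics is needed in this direction.

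For the converse, assume \(\tau'\ge\tau-\delta\), and use the witness of Lemma \ref{lem-A-structure}. For every \(\tau'\) in the range \eqref{eq:def-uT-range}, the full truncation at \(\tau'\) satisfies \(A_n[\tau'][\tau'+1]=1\). This covers \(\tau'=-1\), where the witness is the void family with \(\indeg=0\), as well as \(0\le\tau'\le n\). Taking \(\rho=\tau'+1\), we have \(\rho>\tau'\ge\tau-\delta\), so this \(\rho\) lies in the summation range and contributes \(1\). Hence the entry is at least \(1\).

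For the proper matrix \(T'_n(\delta)\), the columns \(\rho=0,1\) of \(A'_n\) are zero. Every nonzero entry of row \(\tau'\in\{-1,0\}\) of \(A_n\) sits at \(\rho\le\tau'+1\le1\), so those rows of \(A'_n\) vanish, and therefore so do the columns \(\tau'=-1,0\) of \(T'_n(\delta)\). For \(\tau'\ge1\) the same two arguments apply. The upper bound \(\rho\le\tau'+1\) survives zeroing columns, and the witness column \(\rho=\tau'+1\ge2\) is not one of the deleted columns.

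The only step needing care is this boundary bookkeeping at \(\tau'=-1,0\) and at the deleted columns, namely checking that the witness \(\rho=\tau'+1\) stays in range and is not one of the columns removed in the proper case. I expect no genuine obstacle beyond that.
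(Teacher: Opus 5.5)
Your proof is correct and follows the paper's argument essentially step for step. Positivity of the non-negative sum reduces to the existence of some \(\rho>\tau-\delta\) in the support of row \(\tau'\); the bound \(\rho\le\tau'+1\) gives one direction and the full truncation at \(\tau'\), attaining \(\rho=\tau'+1\), gives the other; in the proper case the rows \(\tau'\le0\) are killed by the deleted columns, while for \(\tau'\ge1\) the witness column \(\tau'+1\ge2\) survives. The paper only packages your two directions into the single observation that the largest \(\rho\) with \(A_n[\tau'][\rho]>0\) is \(\tau'+1\).
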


\begin{proof}
By \eqref{eq:def-T} the entry \(T_n(\delta)[\tau][\tau']=\sum_{\rho>\tau-\delta}A_n[\tau'][\rho]\)
is a sum of non-negative terms, so it is positive exactly when some \(\rho>\tau-\delta\) has
\(A_n[\tau'][\rho]>0\). The largest \(\rho\) with \(A_n[\tau'][\rho]>0\) is \(\rho=\tau'+1\).
Nothing larger occurs, since \(A_n\) is supported on \(\rho\le\tau+1\) by Definition \ref{def-A},
which in row \(\tau'\) reads \(\rho\le\tau'+1\); and \(\tau'+1\) does occur, being attained by the
full truncation at \(\tau'\), the unique \(\vek s\) with \(s(j)=\binom nj\) for \(j\le\tau'\) and
\(s(j)=0\) beyond, which is the void family at \(\tau'=-1\) and the full simplex at \(\tau'=n\).
The sum is therefore positive if and only if \(\tau'+1>\tau-\delta\), which for integers is
\(\tau'\ge\tau-\delta\).

For the proper matrix, \(A'_n\) is \(A_n\) with the columns \(\rho=0\) and \(\rho=1\) deleted
(Theorem \ref{thm-proper}). A row \(\tau'\le0\) of \(A_n\) is supported on \(\rho\le\tau'+1\le1\) and
is therefore deleted entirely, which is the vanishing of those two columns of \(T'_n(\delta)\); and
for \(\tau'\ge1\) the entry \(A_n[\tau'][\tau'+1]=1\) survives the deletion, since \(\tau'+1\ge2\),
so the argument above applies to \(A'_n\) word for word.
\end{proof}

Read \eqref{eq:transfer-support} as a statement about the digraph on the \(\tp\)-values, in the
manner of Remark \ref{rem-delta0-digraph}: the arcs of \(T_n(\delta)\) are exactly the pairs
\(\tau\to\tau'\) with \(\tau'\ge\tau-\delta\), so one may move up without limit and down by at most
\(\delta\). Three readings of that are used below.

\begin{itemize}
\item At \(\delta=0\) the condition is \(\tau'\ge\tau\), so \(T_n(0)\) is upper triangular. That is the
first step of the proof of Proposition \ref{prop-delta0} again, reached here from the general
statement instead of separately.
\item For \(\delta\ge1\) the digraph is strongly connected, so \(T_n(\delta)\) is \textbf{irreducible}: every
\(\tau\) has an arc to \(n\), since \(n\ge\tau-\delta\), and from \(n\) one may descend to any
\(\tau'\ge n-\delta\), so repeating reaches every value down to \(-1\) in at most
\(\lceil(n+1)/\delta\rceil\) steps.
\item For every \(\delta\ge0\) there is a loop at each vertex, since \(\tau\ge\tau-\delta\). Together
with irreducibility that makes \(T_n(\delta)\) \textbf{aperiodic} for \(\delta\ge1\), by criterion (iii)
of \autocite[Note V.31]{FlajoletSedgewick2009}.
\end{itemize}

The same three readings hold for \(T'_n(\delta)\) on the principal block
\(\tau,\tau'\in\lbrace1,\dots,n\rbrace\), with \(\lceil(n-1)/\delta\rceil\) in place of
\(\lceil(n+1)/\delta\rceil\). Off that block \(T'_n(\delta)\) is reducible for every \(\delta\),
the two vanishing columns meaning that no arc enters \(\tau'=-1\) or \(\tau'=0\) at all.

\begin{proposition}
For a constant gap \(\delta\), put
\begin{equation}\label{eq:def-ar}
  a_r\;:=\;\lvert\mathrm{FF}(n;0,\delta,2\delta,\dots,(r-1)\delta)\rvert
  \;=\;\vek u_n\cdot T_n(\delta)^{r-1}\cdot\vek 1_n .
\end{equation}
Then \(a_r\) satisfies the linear recurrence whose characteristic polynomial is
that of \(T_n(\delta)\), and its exponential growth rate is the Perron root \(\lambda\) of
\(T_n(\delta)\), in the sense that \(a_r^{1/r}\to\lambda\). The same holds for the proper count with
\(T'_n(\delta)\).
\label{prop-recurrence}
\end{proposition}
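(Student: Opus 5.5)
The plan is to derive both assertions from \eqref{eq:def-ar} by elementary linear algebra on the single matrix \(T=T_n(\delta)\), with the equality \(a_r=\vek u_n T^{r-1}\vek 1_n\) taken from Theorem \ref{thm-main} (all gaps equal \(\delta\)).

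\textbf{The recurrence.} Let \(\chi(x)=x^{m}+c_{m-1}x^{m-1}+\cdots+c_0\) be the characteristic polynomial of \(T\), with \(m=n+2\). By Cayley--Hamilton, \(\chi(T)=0\). Multiply on the left by \(\vek u_n T^{r-1}\) and on the right by \(\vek 1_n\) to get
\[
  a_{r+m}+c_{m-1}a_{r+m-1}+\cdots+c_0a_r=0\qquad(r\ge1).
\]
This is the claimed recurrence. The proper case is identical with \(T'_n(\delta)\) and \(\vek u'_n\), using Theorem \ref{thm-proper}.

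\textbf{The growth rate.} Here I will use Lemma \ref{lem-transfer-support} and the discussion after it.

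For \(\delta\ge1\), \(T\) is irreducible and aperiodic, so it is primitive. Perron--Frobenius then gives \(T^{k}/\lambda^{k}\to\vek v\vek w^{\top}\), where \(\vek v,\vek w\) are strictly positive right and left Perron vectors normalised by \(\vek w^{\top}\vek v=1\). Since \(\vek u_n\) and \(\vek 1_n\) are nonnegative and nonzero, the limit constant is \((\vek u_n\vek v)(\vek w^{\top}\vek 1_n)>0\). Hence \(a_r\sim K\lambda^{r-1}\), and so \(a_r^{1/r}\to\lambda\).

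For \(\delta=0\), \(T\) is unipotent by Proposition \ref{prop-delta0}, so \(\lambda=1\). There \(a_r\) is a polynomial in \(r\), which is positive because it counts a nonempty set (the tuple of void families is chained). A positive polynomial has \(a_r^{1/r}\to1=\lambda\).

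A uniform argument avoiding the case split is also available. For a nonnegative matrix, \(\|T^k\|^{1/k}\to\rho(T)\). Because every entry of the positive row \(\vek u_n\) that matters is covered, \(\vek u_nT^{k}\vek 1_n\) is comparable to \(\|T^k\|\) up to constants. This is clear when \(\vek u_n\) is strictly positive, which holds here since \(u_n[\tau]\ge1\) for every \(\tau\) (the full truncation). Specifically,
\[
  \min_\tau u_n[\tau]\cdot \mathbf 1^{\top}T^k\mathbf 1\;\le\; a_{k+1}\;\le\;\max_\tau u_n[\tau]\cdot\mathbf 1^{\top}T^k\mathbf 1,
\]
and \(\mathbf 1^{\top}T^k\mathbf 1\) is the entrywise \(\ell^1\) norm of \(T^k\). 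Gelfand's formula then finishes the argument. I will use this sandwich as the main route.

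\textbf{The main obstacle} is the proper case, where \(\vek u'_n\) vanishes at \(\tau=-1,0\), so the sandwich fails on the full index set. Here I follow the device of Theorem \ref{thm-delta0-degrees}. Because columns \(-1,0\) of \(T'_n(\delta)\) vanish and \(\vek u'_n\) vanishes there too, the count equals the same product over the principal block on \(\{1,\dots,n\}\). On that block \(u'_n[\tau]\ge A_n[\tau][\tau+1]=1>0\), so the sandwich applies there.

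It remains to check that the Perron root of this block equals that of \(T'_n(\delta)\). The block-triangular structure of \(T'_n(\delta)\) puts the extra eigenvalues on the vanishing columns. The remaining diagonal part of that \(2\times2\) corner is zero, so its eigenvalues are \(0\) and do not exceed the block's Perron root, which is at least \(1\). The characteristic-polynomial recurrence for the proper count then holds for \(T'_n(\delta)\) as stated, and also for the smaller block.
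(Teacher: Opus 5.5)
Your proposal is correct and follows the paper's proof essentially step for step: Cayley--Hamilton for the recurrence; for the growth rate, the sandwich $\bigl(\min_\tau u_n[\tau]\bigr)\Vert T^{r-1}\Vert\le a_r\le\bigl(\max_\tau u_n[\tau]\bigr)\Vert T^{r-1}\Vert$ against the entry-sum norm together with Gelfand's formula, which needs no irreducibility and so also covers $\delta=0$; and for the proper count, the reduction to the principal block $\{1,\dots,n\}$, where $\vek u'_n$ is positive. Your explicit block-triangular check, that the characteristic polynomial of $T'_n(\delta)$ is $\lambda^2$ times that of the block, is the paper's remark that the block carries every non-zero eigenvalue, and your case-split argument via primitivity is a valid but unneeded aside.
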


\begin{proof}
The recurrence is the Cayley--Hamilton theorem applied to \(T_n(\delta)\). That theorem says that the
characteristic polynomial annihilates the matrix, and the vectors \(\vek u_n\) and \(\vek 1_n\) are
fixed once and for all, so the same polynomial annihilates the scalar sequence \(a_r\). For the
growth rate, write \(\Vert M\Vert\) for the sum of the entries of a non-negative matrix \(M\); it is
a norm on non-negative matrices, so \(\Vert T_n(\delta)^k\Vert^{1/k}\) tends to the spectral radius
of \(T_n(\delta)\) \autocite[Theorem 5.7.10]{HornJohnson2013}, and that radius is itself an
eigenvalue, the Perron root \(\lambda\), because \(T_n(\delta)\) is non-negative
\autocite[Theorem 8.3.1]{HornJohnson2013}. Neither step needs irreducibility, which \(T_n(0)\) does
not have. Every entry of
\(\vek u_n\) is positive, since each value \(\tau=-1,\dots,n\) of \(\tp\) is attained, so
\[
  \bigl(\min_\tau u_n[\tau]\bigr)\,\Vert T_n(\delta)^{r-1}\Vert\;\le\;a_r\;\le\;
  \bigl(\max_\tau u_n[\tau]\bigr)\,\Vert T_n(\delta)^{r-1}\Vert ,
\]
and taking \(r\)-th roots gives \(a_r^{1/r}\to\lambda\). The recurrence argument applies verbatim to
\(T'_n(\delta)\). For the growth rate \(\vek u'_n\) is not positive, since it vanishes at \(\tau=-1,0\);
but so do the columns \(\tau'=-1,0\) of \(T'_n(\delta)\), so the proper count is the same product
taken over the block \(\lbrace1,\dots,n\rbrace\), where \(\vek u'_n\) is positive, and that block
carries every non-zero eigenvalue of \(T'_n(\delta)\). The argument above then runs on the block.
\end{proof}

\begin{remark}
It is worth saying what Proposition \ref{prop-recurrence} does \textbf{not} claim, since the natural stronger
reading of it is false. There is in general no constant \(K\) with \(a_r\sim K\lambda^{\,r}\). At
\(\delta=0\) the matrix is unipotent, so \(\lambda=1\), while \(a_r\) is a polynomial in \(r\) of
degree exactly \(n+1\) by Theorem \ref{thm-delta0-degrees}; the ratio \(a_r/\lambda^{\,r}\) is then
\(a_r\) itself, and is unbounded. The hypothesis that fails there is irreducibility, and
\(\delta=0\) is the only place it fails: for every \(\delta\ge1\) the constant does exist, and
Proposition \ref{prop-growth-constant} produces it, together with an error term. The Perron roots of
Table \ref{tab:perron-roots} are the \(\lambda\) of that proposition.
\label{rem-recurrence-not-asymptotic}
\end{remark}

\begin{proposition}
(\emph{The constant, for a positive gap.}) Fix \(n\ge1\) and a constant gap \(\delta\ge1\), let \(a_r\) be
as in \eqref{eq:def-ar}, and let \(\lambda\) be the Perron root of \(T_n(\delta)\). Then there are
constants \(K>0\) and \(\varrho\) with \(0\le\varrho<\lambda\) such that
\begin{equation}\label{eq:growth-constant}
  a_r\;=\;K\lambda^{\,r}+O(\varrho^{\,r})\qquad(r\to\infty) ,
\end{equation}
and in particular \(a_r\sim K\lambda^{\,r}\). The same holds for the proper count, with
\(T'_n(\delta)\), its own Perron root and its own constants.
\label{prop-growth-constant}
\end{proposition}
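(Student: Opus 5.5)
The plan is to apply Perron--Frobenius theory to the primitive matrix \(T_n(\delta)\). The readings after Lemma~\ref{lem-transfer-support} give two facts for \(\delta\ge1\): \(T_n(\delta)\) is irreducible, because the digraph on \(\tp\)-values is strongly connected, and it is aperiodic, because every vertex carries a loop. A non-negative matrix with both properties is primitive. The Perron--Frobenius theorem for primitive matrices then says three things. The Perron root \(\lambda>0\) is a simple eigenvalue. It has strictly positive left and right eigenvectors \(\vek w\) (a row vector) and \(\vek v\) (a column vector), normalised by \(\vek w\vek v=1\). Every other eigenvalue \(\mu\) satisfies \(\lvert\mu\rvert<\lambda\). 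It follows that
\[
  T_n(\delta)^{k}=\lambda^{k}\,\vek v\,\vek w+R^{k},
\]
where \(R=T_n(\delta)-\lambda\vek v\vek w\) satisfies \(R\,\vek v\vek w=\vek v\vek w\,R=0\). The spectrum of \(R\) is that of \(T_n(\delta)\) with \(\lambda\) replaced by \(0\).

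Next I set \(\varrho'=\max\lvert\mu\rvert\) over the non-Perron eigenvalues, and set \(\varrho'=0\) if there are none, so that \(\varrho'<\lambda\). Fix any \(\varrho\) with \(\varrho'<\varrho<\lambda\), taking \(\varrho=0\) if \(\varrho'=0\) and \(R\) is nilpotent. Gelfand's formula, in the form already used in Proposition~\ref{prop-recurrence} with \cite[Theorem 5.7.10]{HornJohnson2013}, gives \(\Vert R^k\Vert=O(\varrho^k)\). If \(\varrho'=0\), \(R\) is nilpotent, \(R^k=0\) for large \(k\), and any \(\varrho\in[0,\lambda)\) works. Substituting into \(a_r=\vek u_n T_n(\delta)^{r-1}\vek 1_n\) gives
\[
  a_r=\lambda^{r-1}(\vek u_n\vek v)(\vek w\vek 1_n)+O(\varrho^{r-1}).
\]
Hence
\[
  K=\lambda^{-1}(\vek u_n\vek v)(\vek w\vek 1_n),
\]
and \(K>0\) because \(\vek u_n\) has all entries positive (every \(\tp\) value is attained, as noted in the proof of Proposition~\ref{prop-recurrence}) and \(\vek v,\vek w\) are positive. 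The error \(O(\varrho^{r-1})\) is \(O(\varrho^r)\) up to a constant factor when \(\varrho>0\), and is eventually zero otherwise.

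For the proper count I would restrict to the principal block on \(\lbrace1,\dots,n\rbrace\), exactly as in the proof of Proposition~\ref{prop-recurrence}. The block argument needs three facts. \(\vek u'_n\) vanishes at \(\tau=-1,0\). The columns \(\tau'=-1,0\) of \(T'_n(\delta)\) vanish. So every product \(\vek u'_n T'_n(\delta)^{k}\vek 1_n\) equals the corresponding product over the block. On the block, Lemma~\ref{lem-transfer-support} gives the same support pattern \(\tau'\ge\tau-\delta\). So the block is again strongly connected (every vertex reaches \(n\), and from \(n\) one descends by steps of at most \(\delta\) down to \(1\)) and has loops, and is therefore primitive. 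The restricted \(\vek u'_n\) is positive there, since each \(\tp\) value \(\tau\in\lbrace1,\dots,n\rbrace\) has the full truncation as a witness. The argument above therefore runs verbatim, and the Perron root of the block is that of \(T'_n(\delta)\), because the discarded rows and columns contribute only zero eigenvalues.

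The main obstacle I expect is a bookkeeping point rather than a deep one: confirming that primitivity, and not mere irreducibility, is what is needed. Irreducibility alone would allow other eigenvalues of modulus \(\lambda\), which would make \(a_r/\lambda^r\) oscillate instead of converging. The loops at every vertex settle this, so I would cite the aperiodicity criterion \cite[Note V.31]{FlajoletSedgewick2009} already invoked after Lemma~\ref{lem-transfer-support}. A second point to check is the block reduction in the proper case. The eigenvalues dropped there are zero, because the block decomposition of \(T'_n(\delta)\) is block triangular with the two zero columns. This ensures \(\lambda\) is unchanged and that the spectral gap on the block still yields \(\varrho<\lambda\).
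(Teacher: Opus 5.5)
Your proposal is correct and follows the paper's proof. It uses the same steps: primitivity of \(T_n(\delta)\) from Lemma~\ref{lem-transfer-support} (irreducibility plus the loops), the Perron--Frobenius asymptotics of its powers, positivity of \(\vek u_n\) for \(K>0\), and the same reduction to the principal block \(\lbrace1,\dots,n\rbrace\) for the proper count. The only difference is that the paper cites \cite[Theorem V.7]{FlajoletSedgewick2009} for entrywise constants \(\varphi_{\tau,\tau'}>0\), whereas you derive the asymptotics from the spectral projector \(\vek v\vek w\), which gives the closed form \(K=\lambda^{-1}(\vek u_n\vek v)(\vek w\vek 1_n)\). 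One small adjustment: \(R\) has entries of both signs, so apply Gelfand's formula with the sum of absolute values of the entries, not the plain entry-sum of Proposition~\ref{prop-recurrence}.
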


\begin{proof}
By Lemma \ref{lem-transfer-support}, and the second and third of the readings drawn from it above,
\(T_n(\delta)\) is irreducible and aperiodic for \(\delta\ge1\). Put \(F(z)=(I-zT_n(\delta))^{-1}\), so that
\([z^m]F_{\tau,\tau'}(z)=\bigl(T_n(\delta)^m\bigr)[\tau][\tau']\). Then
\autocite[Theorem V.7]{FlajoletSedgewick2009} applies and gives
\begin{equation}\label{eq:fs-v7}
  \bigl(T_n(\delta)^m\bigr)[\tau][\tau']\;=\;\varphi_{\tau,\tau'}\lambda^{\,m}+O(\varrho^{\,m}),
  \qquad 0\le\varrho<\lambda ,
\end{equation}
with constants \(\varphi_{\tau,\tau'}>0\), the dominant eigenvalue \(\lambda\) there being the
largest positive eigenvalue of \(T_n(\delta)\), which is its Perron root. Taking \(m=r-1\) in
\eqref{eq:fs-v7} and summing against \(\vek u_n\) and \(\vek 1_n\), a sum of \((n+2)^2\) terms and
so a finite one, \eqref{eq:def-ar} gives
\[
  a_r\;=\;\Bigl(\sum_{\tau,\tau'}u_n[\tau]\varphi_{\tau,\tau'}\Bigr)\lambda^{\,r-1}+O(\varrho^{\,r-1}) .
\]
Every \(u_n[\tau]\) is positive, as in the proof of Proposition \ref{prop-recurrence}, so the bracket
is positive; put \(K=\lambda^{-1}\sum_{\tau,\tau'}u_n[\tau]\varphi_{\tau,\tau'}>0\). Finally
\(O(\varrho^{\,r-1})=O(\varrho^{\,r})\), the two differing only by the constant factor
\(\varrho^{-1}\) when \(\varrho>0\) and both being zero for \(r\ge2\) when \(\varrho=0\). That is
\eqref{eq:growth-constant}, and \(K>0\) with \(\varrho<\lambda\) gives the asymptotic equivalence.

For the proper count, \(T'_n(\delta)\) is not irreducible on the whole index set, by
Lemma \ref{lem-transfer-support}. But the proof of Proposition \ref{prop-recurrence} has already
reduced that count to the same product taken over the principal block \(\lbrace1,\dots,n\rbrace\),
where \(\vek u'_n\) is positive and which carries every non-zero eigenvalue of \(T'_n(\delta)\); and
on that block \(T'_n(\delta)\) is irreducible and aperiodic for \(\delta\ge1\), again by
Lemma \ref{lem-transfer-support}. The argument above then runs on the block verbatim.
\end{proof}

\begin{mexample}
At \(n=2\), \(\delta=1\), the characteristic polynomial of \(T_2(1)\) and the recurrence it gives are
\[
  \lambda(\lambda-1)(\lambda^2-4\lambda+1),\qquad a_r=5a_{r-1}-5a_{r-2}+a_{r-3};
\]
the values are
\(5,20,76,285,1065\) for \(r=1,\dots,5\), and indeed \(5\cdot285-5\cdot76+20=1065\). The Perron root
is \(2+\sqrt3\approx3.732\). At \(n=3\), \(\delta=1\) they are
\[
  \lambda(\lambda^4-9\lambda^3+21\lambda^2-15\lambda+3),\qquad
  a_r=9a_{r-1}-21a_{r-2}+15a_{r-3}-3a_{r-4},
\]
checked against \(10,69,426,2532,14847\).

In both cases a factor of \(\lambda\) can be dropped before reading off the recurrence, and it is
worth saying why rather than leaving it to be noticed. \(T_n(\delta)\) is singular for
\(\delta\ge1\): rows \(\tau=-1\) and \(\tau=0\) both equal \(\vek u_n\), since
\(\rho>-1-\delta\) and \(\rho>-\delta\) admit the same \(\rho\ge0\). A root at \(0\)
contributes a term \(c\cdot0^{\,r}\), which vanishes for \(r\ge1\), so the quotient annihilates
the sequence just as the characteristic polynomial does and gives a shorter recurrence. The proper
matrix shows the same phenomenon doubled: \(T'_n(0)\) has characteristic polynomial
\(\lambda^2(\lambda-1)^n\), the two factors of \(\lambda\) being the two identically zero
columns that the proof of Theorem \ref{thm-delta0-degrees} turns on.

Across \(n\) and \(\delta\) the Perron roots are as collected in Table \ref{tab:perron-roots},
with \(\lvert\mathcal F_n\rvert\) and \(\lvert S_n\rvert\) in the last two columns for
comparison; an entry in bold has reached its limit exactly, which is
Proposition \ref{prop-perron-threshold}.
\begingroup\small
\begin{center}
\begin{tabular}{@{}cc rr cc@{}}
  \toprule
  \(n\) & \(\delta\) & \(\lambda(\delta)\) & \(\lambda_{\mathrm{pr}}(\delta)\)
        & \(\lvert\mathcal F_n\rvert\) & \(\lvert S_n\rvert\)\\
  \midrule
  2 & 0 & 1 & 1 & 5 & 2\\
  2 & 1 & 3.7321 & \textbf{2} & &\\
  2 & 2 & 4.7913 & \textbf{2} & &\\
  2 & 3 & \textbf{5} & \textbf{2} & &\\
  \addlinespace
  3 & 1 & 5.8188 & 4.3028 & 10 & 5\\
  3 & 2 & 9.0138 & \textbf{5} & &\\
  3 & 3 & 9.8990 & \textbf{5} & &\\
  3 & 4 & \textbf{10} & \textbf{5} & &\\
  \addlinespace
  4 & 1 & 9.8557 & 8.7566 & 26 & 16\\
  4 & 2 & 20.5859 & 15.3485 & &\\
  4 & 3 & 25.2491 & \textbf{16} & &\\
  4 & 4 & 25.9615 & \textbf{16} & &\\
  4 & 5 & \textbf{26} & \textbf{16} & &\\
  \addlinespace
  5 & 1 & 17.7573 & 17.1251 & 96 & 70\\
  5 & 2 & 57.8411 & 52.1804 & &\\
  5 & 3 & 87.4360 & 69.3802 & &\\
  5 & 4 & 95.4659 & \textbf{70} & &\\
  5 & 5 & 95.9896 & \textbf{70} & &\\
  5 & 6 & \textbf{96} & \textbf{70} & &\\
  \bottomrule
\end{tabular}
\captionof{table}{The Perron root \(\lambda(\delta)\) of \(T_n(\delta)\) and the Perron root
\(\lambda_{\mathrm{pr}}(\delta)\) of \(T'_n(\delta)\), for \(2\le n\le5\) and small constant
gaps \(\delta\). A bold entry has reached its limiting value exactly, which happens from
\(\delta=n+1\) on for the improper root and from \(\delta=n-1\) on for the proper one
(Proposition~\ref{prop-perron-threshold}).}
\label{tab:perron-roots}
\end{center}
\endgroup
At \(\delta=0\) both roots are \(1\), the unipotent case of Proposition \ref{prop-delta0}, where the
growth is polynomial rather than exponential. The proper root reaches its limit two steps before the
improper one, and in every case computed (\(n\le6\)) the last improper step is the narrowest.
\label{mex-recurrences}
\end{mexample}

\begin{proposition}
(\emph{Normal form: all the gap-dependence sits in one shift.}) Two matrices are needed, and only two. The
\textbf{base matrix} is the transfer matrix at gap zero,
\begin{equation}\label{eq:def-B}
  B_n\;=\;T_n(0),\qquad B_n[\tau][\tau']=\sum_{\rho>\tau}A_n[\tau'][\rho] ,
\end{equation}
which by Proposition \ref{prop-delta0} is unipotent and carries the Pascal superdiagonal of
Lemma \ref{lem-A-structure}. The \textbf{saturating one-step shift} on the \(\tp\)-index is the
\((n+2)\times(n+2)\) matrix
\begin{equation}\label{eq:def-P}
  P_n[\tau][\tau']=\begin{cases}
    1, & \tau'=\max(\tau-1,-1),\\
    0, & \text{otherwise},
  \end{cases}
  \qquad \tau,\tau'\in\lbrace-1,0,\dots,n\rbrace .
\end{equation}
Both are indexed as in Definition \ref{def-uT}. Let \(e_{-1}\) be the standard basis vector supported at
the index \(-1\). Then for every \(\delta\ge0\),
\begin{align}
  T_n(\delta) &= P_n^{\delta}B_n ,\label{eq:normalform}\\
  \vek u_n    &= e_{-1}^{\!\top}B_n ,\label{eq:normalform-u}
\end{align}
and consequently
\begin{equation}\label{eq:normalform-product}
  \lvert\mathrm{FF}(n;\vek d)\rvert
   =e_{-1}^{\!\top}B_n\,P_n^{\delta_2}B_n\,P_n^{\delta_3}\cdots P_n^{\delta_r}B_n\,\vek 1_n .
\end{equation}
\label{prop-normalform}
\end{proposition}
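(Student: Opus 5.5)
The plan is to verify \eqref{eq:normalform} entrywise, by reading off what left multiplication by \(P_n\) does to rows, and then to assemble \eqref{eq:normalform-u} and \eqref{eq:normalform-product} from it together with Theorem \ref{thm-main}. From \eqref{eq:def-P}, \((P_nM)[\tau][\tau']=M[\max(\tau-1,-1)][\tau']\) for any matrix \(M\) indexed as in Definition \ref{def-uT}. By induction on \(\delta\), and because the map \(\tau\mapsto\max(\tau-1,-1)\) iterated \(\delta\) times is \(\tau\mapsto\max(\tau-\delta,-1)\), we get
\[
  (P_n^{\delta}B_n)[\tau][\tau']=B_n[\max(\tau-\delta,-1)][\tau']=\sum_{\rho>\max(\tau-\delta,-1)}A_n[\tau'][\rho].
\]
The base case \(\delta=0\) is \(P_n^0=I\) and \(T_n(0)=B_n\) by \eqref{eq:def-B}.

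The one point needing care, and the only real obstacle, is the saturation at \(-1\). We must show that \(\sum_{\rho>\max(\tau-\delta,-1)}\) equals \(\sum_{\rho>\tau-\delta}\), the definition \eqref{eq:def-T}.
\begin{itemize}
\item If \(\tau-\delta\ge-1\), the two lower limits coincide.
\item If \(\tau-\delta<-1\), both ranges contain every \(\rho\ge0\). This is because \(A_n\) is supported on \(\rho\in\{0,\dots,n+1\}\) by \eqref{eq:def-A-range}, so the extra values \(\rho\le-1\) admitted by \(\rho>\tau-\delta\) contribute nothing.
\end{itemize}
Hence \(T_n(\delta)=P_n^{\delta}B_n\) for every \(\delta\ge0\). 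This is the same observation made in Example \ref{mex-recurrences}, that rows \(-1\) and \(0\) of \(T_n(\delta)\) agree once \(\delta\ge1\).

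For \eqref{eq:normalform-u}, note that \(e_{-1}^{\!\top}B_n\) is row \(\tau=-1\) of \(B_n\), namely \(\sum_{\rho>-1}A_n[\tau'][\rho]=\sum_{\rho\ge0}A_n[\tau'][\rho]\). By the same support remark this is the full row sum \(u_n[\tau']\) of \eqref{eq:def-u}.

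Finally, substitute both identities into \eqref{eq:auto-15} of Theorem \ref{thm-main}. Replacing \(\vek u_n\) by \(e_{-1}^{\!\top}B_n\) and each \(T_n(\delta_i)\) by \(P_n^{\delta_i}B_n\) gives \eqref{eq:normalform-product} directly. No further input is needed; the proof is bookkeeping once the clamping at \(-1\) is matched against the support of \(A_n\).
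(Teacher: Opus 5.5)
Your proof is correct and takes the same approach as the paper. Left multiplication by $P_n^{\delta}$ selects row $\max(\tau-\delta,-1)$ of $B_n$, the saturation at $-1$ is harmless because $A_n$ vanishes for $\rho<0$, row $-1$ of $B_n$ is $\vek u_n$, and substituting into Theorem~\ref{thm-main} finishes. The one difference is that you state the iterate $(P_n^{\delta}M)[\tau][\tau']=M[\max(\tau-\delta,-1)][\tau']$ explicitly, where the paper describes it in words.
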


\begin{proof}
By Definition \ref{def-uT} the entries of the transfer matrix are
\begin{equation*}
  T_n(\delta)[\tau][\tau']=\sum_{\rho>\tau-\delta}A_n[\tau'][\rho] ,
\end{equation*}
which depend on \(\tau\) and \(\delta\) only through the difference \(\tau-\delta\), and which
therefore coincide with row \(\tau-\delta\) of \(T_n(0)\) whenever \(\tau-\delta\ge-1\). For
\(\tau-\delta<-1\) the condition \(\rho>\tau-\delta\) is vacuous, and so is the condition
\(\rho>-1\) that defines row \(-1\) of \(T_n(0)\), so instead of running off the index set the row
saturates at row \(-1\). Shifting every row down by one and saturating at \(-1\) is exactly left
multiplication by the matrix \(P_n\) of \eqref{eq:def-P}, which is \eqref{eq:normalform}. For
\eqref{eq:normalform-u}, read off row \(-1\) of \(T_n(0)\):
\begin{equation*}
  T_n(0)[-1][\tau']=\sum_{\rho>-1}A_n[\tau'][\rho]=\sum_{\rho\ge0}A_n[\tau'][\rho]=u_n[\tau'] .
\end{equation*}
Substituting \eqref{eq:normalform} and \eqref{eq:normalform-u} into Theorem \ref{thm-main} gives
\eqref{eq:normalform-product}.
\end{proof}

\begin{remark}
The normal form has two consequences. First, the count depends on each gap \textbf{only} through
\(\min(\delta_i,n+1)\), because \(P_n\) saturates: \(P_n^{\delta}=P_n^{n+1}=\vek1_n\,e_{-1}^{\!\top}\) for
every \(\delta\ge n+1\). Second, that same identity gives a second and shorter proof of
Propositions \ref{prop-threshold} and \ref{prop-perron-threshold}: above the threshold
\(T_n(\delta)=\vek1_n\,e_{-1}^{\!\top}B_n=\vek1_n\,\vek u_n\), manifestly of rank one with non-zero eigenvalue
\(\vek u_n\cdot\vek1_n=\lvert\mathcal F_n\rvert\). The pair \((B_n,P_n)\), one unipotent
matrix carrying the Pascal superdiagonal of Lemma \ref{lem-A-structure} and one \(0/1\) shift, encodes
every gap vector at once.

It also explains why the matrices do not commute while the improper count is nevertheless
reversal-symmetric
(Example \ref{ex-factors} and Remark \ref{rem-gap-reversal}): the non-commutation is visible in
\eqref{eq:normalform-product}, since \(P_n\) and \(B_n\) do not commute, whereas the reversal symmetry is the
persymmetry of \(A_n\) applied factor by factor inside the bilinear form, a property the individual
\(T_n(\delta)\) cannot express once \(\rho\) has been summed out.
\label{rem-normalform}
\end{remark}

\begin{proposition}
(\emph{The Perron root above the threshold.}) Let \(\lambda(\delta)\) be the Perron root of \(T_n(\delta)\)
and \(\lambda_{\mathrm{pr}}(\delta)\) that of its proper analogue. Then
\begin{align}
  \lambda(\delta) &= \lvert\mathcal F_n\rvert \quad\text{for every }\delta\ge n+1,\label{eq:perron-threshold}\\
  \lambda_{\mathrm{pr}}(\delta) &= \lvert S_n\rvert \quad\text{for every }\delta\ge n-1 .\label{eq:perron-threshold-pr}
\end{align}
In both cases the value is attained, not merely approached, and the thresholds are exactly those of
Propositions \ref{prop-threshold} and \ref{prop-proper-threshold}.
\label{prop-perron-threshold}
\end{proposition}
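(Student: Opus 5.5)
The plan is to show that above each threshold the transfer matrix becomes the rank-one matrix \(\vek 1\,\vek u\). A rank-one matrix \(\vek 1\,\vek u\) has characteristic polynomial \(\lambda^{m-1}(\lambda-\vek u\cdot\vek 1)\), where \(m\) is its size. Its only non-zero eigenvalue is therefore \(\vek u\cdot\vek 1\), which is then the Perron root: it is non-negative and equals the spectral radius.

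For the improper case I would argue straight from Definition \ref{def-uT}. If \(\delta\ge n+1\) and \(\tau\le n\), then \(\tau-\delta\le-1\). The condition \(\rho>\tau-\delta\) therefore admits every \(\rho\ge0\), which is all of the range \eqref{eq:def-A-range}. So \(T_n(\delta)[\tau][\tau']=\sum_\rho A_n[\tau'][\rho]=u_n[\tau']\) for every row \(\tau\), and \(T_n(\delta)=\vek 1_n\vek u_n\). Equivalently, Proposition \ref{prop-normalform} gives \(P_n^{\delta}=\vek 1_n e_{-1}^{\top}\) for \(\delta\ge n+1\), as in Remark \ref{rem-normalform}. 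The eigenvalue is then \(\vek u_n\cdot\vek 1_n=F^n(n)=\lvert\mathcal F_n\rvert\) by Corollary \ref{cor-r1}.

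For the proper case I would use the same computation with \(A'_n\). Its surviving columns are \(\rho\ge2\), so the condition \(\rho>\tau-\delta\) is vacuous on the support once \(\tau-\delta\le1\). Since \(\tau\le n\), this holds for all rows exactly when \(\delta\ge n-1\). Then \(T'_n(\delta)=\vek 1_n\vek u'_n\), with non-zero eigenvalue \(\vek u'_n\cdot\vek 1_n\). This is the total of \(A'_n\), the number of \(\vek s\in\mathcal F_n\) with \(\indeg\ge2\), which is \(\lvert S_n\rvert\) as noted in the proof of Theorem \ref{thm-proper}.

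The remaining task is to check that the thresholds are sharp, so that they match Propositions \ref{prop-threshold} and \ref{prop-proper-threshold}. The phrase ``attained, not merely approached'' is already covered by the exact rank-one identity. For sharpness I would add that \(T_n(\delta)\le\vek 1_n\vek u_n\) entrywise, since \(T_n\) is increasing in \(\delta\). At \(\delta=n\) the entry in row \(n\), column \(-1\) drops from \(1\) to \(0\), by Lemma \ref{lem-transfer-support}, since \(-1<n-n\). The matrix at \(\delta=n\) is irreducible and is dominated by, but not equal to, the rank-one matrix. By the strict monotonicity of the Perron root for irreducible non-negative matrices, this gives \(\lambda(n)<\lvert\mathcal F_n\rvert\). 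By monotonicity the inequality then persists for all smaller \(\delta\ge1\), and it holds trivially at \(\delta=0\), where the root is \(1\). The proper case runs the same argument on the block \(\lbrace1,\dots,n\rbrace\) at \(\delta=n-2\), where the violated entry is in row \(n\), column \(1\); the degenerate case \(n-2<1\) has to be handled separately. This sharpness step is the only delicate point, and it is optional relative to the displayed equalities, which only assert the values at and above the thresholds.
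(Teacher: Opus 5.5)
Your proposal is correct and follows the paper's proof. It has the same two steps: the rank-one identity \(T_n(\delta)=\vek 1_n\vek u_n\) (respectively \(\vek 1_n\vek u'_n\)) above the threshold, and Perron--Frobenius strict monotonicity for sharpness. The one difference is that the paper takes the irreducible matrix in the strictness theorem to be the dominating one, \(\vek 1_n\vek u_n\) (or \(\vek 1_n\vek u'_n\) on the block \(\lbrace1,\dots,n\rbrace\)), which is entrywise positive, so it needs neither the irreducibility of \(T_n(n)\) nor your separate case \(n\le2\). That case is trivial anyway: for \(n=1\) nothing lies below the threshold, and for \(n=2\) the only such gap is \(\delta=0\), where the root is \(1<2=\lvert S_2\rvert\).
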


\begin{proof}
By Definition \ref{def-uT},
\[
  T_n(\delta)[\tau][\tau']=\sum_{\rho>\tau-\delta}A_n[\tau'][\rho] .
\]
The
condition \(\rho>\tau-\delta\) is vacuous exactly when \(\delta>\max\tp-\min\indeg\), which over
\(\mathcal F_n\) is \(\delta\ge n+1\), the computation already made in
Proposition \ref{prop-threshold}. When it is vacuous the inner sum no longer depends on \(\tau\):
every row of \(T_n(\delta)\) is the same row vector \(\vek u_n\), whose entry
\(u_n[\tau']=\sum_\rho A_n[\tau'][\rho]\) is the number of \(\vek s\in\mathcal F_n\) with
\(\tp(\vek s)=\tau'\). So \(T_n(\delta)=\vek1_n\,\vek u_n\) has rank one, its non-zero eigenvalue is
\(\vek u_n\cdot\vek1_n=\sum_{\tau'}u_n[\tau']=|\mathcal F_n|\), and
every other eigenvalue is \(0\). The proper statement is the same computation with \(\min\indeg=2\)
over \(S_n\), giving the threshold \(\delta\ge n-1\) and the root \(|S_n|\).

That the thresholds are exact, that is, that the root is strictly smaller below them, follows from
two results of Perron--Frobenius theory. For \(0\le M\le M'\) entrywise the spectral radius of
\(M\) is at most that of \(M'\) \autocite[Corollary 8.1.19]{HornJohnson2013}; and when \(M'\) is
irreducible the two are equal only if \(M=\zeta DM'D^{-1}\) for some diagonal unitary \(D\) and
some \(\zeta\) of modulus one \autocite[Theorem 8.4.5]{HornJohnson2013}, which for non-negative
\(M\) and \(M'\) forces \(M=M'\) entrywise, since it makes each \(M[i][j]\) equal to
\(|M'[i][j]|\). So \(M\ne M'\) makes the inequality strict.
Here \(M'=T_n(n+1)=\vek1_n\,\vek u_n\) has every entry positive,
because \(\vek u_n\) does, and so is irreducible; for \(\delta\le n\) the matrix \(T_n(\delta)\) lies
below it by the monotonicity in Definition \ref{def-uT}, and differs from it, since the full simplex
followed by the void family is not chained at \(\delta=n\). The proper case is the same argument on
the block \(\lbrace1,\dots,n\rbrace\), where \(T'_n(n-1)\) is positive and, for \(\delta\le n-2\), a
summand with \(\tp=n\) followed by one with \(\indeg=2\) is not chained.
\end{proof}

\begin{remark}
Proposition \ref{prop-perron-threshold} is Propositions \ref{prop-threshold} and
\ref{prop-proper-threshold} stated about the matrix instead of about the sumset. Those propositions say that above
the threshold the chaining constraint is vacuous, so the sumset is a full Cartesian product and
\(|\mathrm{FF}(n;\vek d)|\) is \(|\mathcal F_n|^{\,r}\) exactly. One level down, the constraint is what
couples consecutive summands, and with it gone \(T_n(\delta)\) forgets its row index entirely. A rank-one matrix \(\vek1_n\,\vek u_n\)
has a single non-zero eigenvalue, and it is the total count; multiplying \(r-1\) copies of it
reproduces \(|\mathcal F_n|^{\,r}\) directly.

Figure \ref{fig:perron-vs-delta} shows both roots at \(n=5\). The shape is the content of
Proposition \ref{prop-perron-threshold}: each climbs and then stops dead, the proper one two steps
earlier than the improper, and after that they are constant rather than asymptotic.

\begin{figure}[htbp]
\centering
\includegraphics[width=0.86\linewidth]{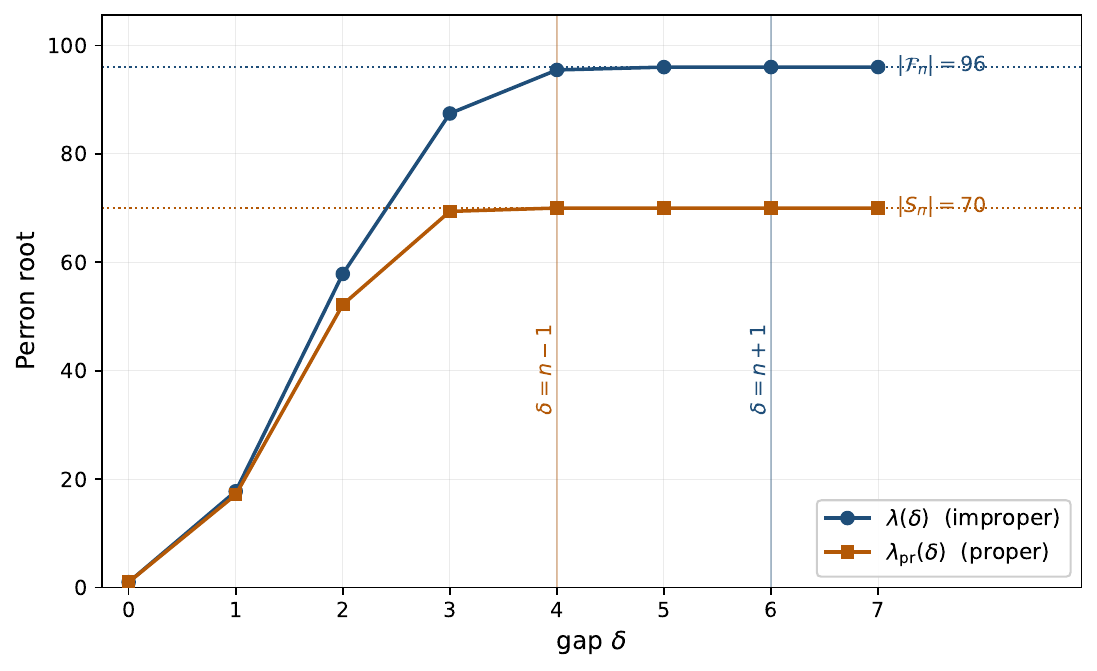}
\caption{The Perron roots \(\lambda(\delta)\) and \(\lambda_{\mathrm{pr}}(\delta)\) of
Definition~\ref{def-uT}'s transfer matrices at \(n=5\), against the gap \(\delta\). Dotted lines mark
\(\lvert\mathcal F_5\rvert=96\) and \(\lvert S_5\rvert=70\); the vertical rules mark the two
thresholds of Proposition~\ref{prop-perron-threshold}, \(\delta=n-1\) for the proper root and
\(\delta=n+1\) for the improper. Each root is *equal* to its limit from its own threshold onward, not
merely close to it.}
\label{fig:perron-vs-delta}
\end{figure}

Below the threshold the root is strictly smaller, and it climbs. For \(n=3\), where
\(|\mathcal F_3|=10\) and \(|S_3|=5\), the improper roots at \(\delta=1,2,3,4\) are
\begin{equation}\label{eq:auto-19}
  5.8188\ldots,\quad 9.0138\ldots,\quad 9.8990\ldots,\quad 10,
\end{equation}
reaching \(|\mathcal F_3|\) exactly at \(\delta=4=n+1\) and staying there; the proper roots at
\(\delta=1,2\) are \(4.3028\ldots\) and \(5\), reaching \(|S_3|\) exactly at \(\delta=2=n-1\). The
approach is from below in both cases, as it must be, since \(T_n(\delta)\) increases entrywise with
\(\delta\). Note how narrow the last step is: at \(\delta=n\) the improper root is already
\(9.8990\ldots\) against \(10\), and at \(n=6\) it is \(552.998\ldots\) against \(553\), which is
Proposition \ref{prop-threshold}'s ``the constraint fails for exactly one pair'' made quantitative.
\label{rem-perron}
\end{remark}

The remaining growth question is in the number of vertices. The count \(|\mathrm{FF}(n;\vek d)|\) itself
cannot be polynomial in \(n\), since it dominates \(|\mathcal F_n|=F^n(n)\). The right generalisation
is Linusson's own: keep the dimension bound and free the vertex bound.

\begin{definition}
For \(p\ge n\), let \(\mathcal F_n^{(p)}\) be the set of \(f\)-vectors with \(f_1\le p\) and
\(f_j=0\) for \(j>n\), and \(\mathrm{FF}^{(p)}(n;\vek d)\) the corresponding sumset.

From here to the end of this section the exterior algebra is on \(p\) generators, so \(p\) is the
number of vertices, and \(n\) bounds the size of a face. The subscript \(n\) on
\(\mathcal F_n^{(p)}\), \(A^{(p)}_n\) and \(T^{(p)}_n\) is that bound, not the size of a ground set.
\label{def-twoparam}
\end{definition}

\begin{theorem}
Fix \(n\ge1\) and \(r\ge1\); let \(d_1\le\cdots\le d_r\) be integers and put
\(\delta_i=d_i-d_{i-1}\) for \(2\le i\le r\), so that \(\vek d\) is sorted and the gaps are
non-negative. Let \(p\ge n\), let \(\mathcal F_n^{(p)}\) be as in Definition \ref{def-twoparam}, and set
\begin{equation}\label{eq:Ap}
  A^{(p)}_n[\tau][\rho]=E^p(\tau,\rho-1)-E^p(\tau-1,\rho-1),
  \qquad \tau=-1,\dots,n,\quad\rho=0,\dots,n+1,
\end{equation}
with \(\vek u^{(p)}_n\) its vector of row sums and \(T^{(p)}_n(\delta)\) built from \(A^{(p)}_n\) by
\eqref{eq:def-u}--\eqref{eq:def-T}, both indexed as in Definition \ref{def-uT}. Then
\begin{equation}\label{eq:auto-20}
  \lvert\mathrm{FF}^{(p)}(n;\vek d)\rvert\;=\;\vek u^{(p)}_n\cdot T^{(p)}_n(\delta_2)\cdots T^{(p)}_n(\delta_r)\cdot\vek 1_n ,
\end{equation}
and for fixed \(n\), \(r\) and \(\vek d\) this is a polynomial in \(p\).
\label{thm-poly-p}
\end{theorem}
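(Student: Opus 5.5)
The plan has two parts. First I re-establish the counting formula \eqref{eq:auto-20} with \(p\) vertices, by rerunning the chain Theorem \ref{thm-chaining} \(\to\) Theorem \ref{thm-main}. Then I read off polynomiality entrywise from Linusson.

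\textbf{Step 1 (the bridge at \(p\) vertices).} Work in \(\Ext\) on \(p\) generators. The dictionary \eqref{eq:dictionary} identifies \(\mathcal F_n^{(p)}\) with the Hilbert functions of \((\Ext)/I\) for monomial ideals \(I\supseteq\mathfrak m^{n+1}\). Hence \(\mathrm{FF}^{(p)}(n;\vek d)\) is the set of \(\HF(F/M)\) for monomial \(M=\bigoplus I_ig_i\) with every \(I_i\supseteq\mathfrak m^{n+1}\).

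Parts (b)--(d) of the proof of Theorem \ref{thm-chaining} give a unique lex representative \(L\) for each such \(\vek x\). What is new, in the spirit of Lemma \ref{lem-lexproper}, is showing that every component of \(L\) again has \(\tp\le n\). I would argue through the chained decomposition and the explicit inverse of Lemma \ref{lem-explicit-inverse}(1).

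Suppose component \(i-1\) of the chained tuple has \(s_{i-1}(m)>0\) for some \(m>n\), and put \(j=d_{i-1}+m\). The disjoint-window structure in that proof then makes every later component full in degree \(j\). This gives
\[
x(j)>\sum_{k\ge i}\tbinom p{j-d_k}.
\]
In the original decomposition, however, components \(k\le i-1\) vanish in degree \(j\), since \(j-d_k\ge m>n\), and the later ones are bounded by the box. So \(x(j)\le\sum_{k\ge i}\binom p{j-d_k}\), a contradiction.

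It follows that \(\Sigma_{\vek d}\) restricts to a bijection from the chained tuples in \((\mathcal F_n^{(p)})^r\) onto \(\mathrm{FF}^{(p)}(n;\vek d)\). Here ``chained'' means \eqref{eq:chaining-condition} with \(\tp,\indeg\) computed against \(\binom pj\).

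\textbf{Step 2 (the joint distribution and the matrix product).} The argument of Lemma \ref{lem-joint} at general \(p\) gives
\[
E^p(m,k)=\#\{\vek s\in\mathcal F_n^{(p)}:\tp\le m,\ \indeg=k+1\}\qquad\text{for } m\le n.
\]
Taking differences in \(m\) yields \eqref{eq:Ap}. The ranges \(\tau\in\{-1,\dots,n\}\) and \(\rho\le\tau+1\le n+1\) still cover everything, because \(\tp\le n\) forces \(\indeg\le n+1\). This holds even for \(p>n\), where the top truncation has \(\indeg=n+1\) since \(f_{n+1}=0<\binom p{n+1}\).

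The rearrangement in the proof of Theorem \ref{thm-main} then applies verbatim. Constraint \(i\) sees \(\vek s_{i-1}\) only through \(\tp\) and \(\vek s_i\) only through \((\tp,\indeg)\), which gives \eqref{eq:auto-20}.

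\textbf{Step 3 (polynomiality).} For fixed \(m,k\le n\), Linusson's Theorem 3.3 says that \(E^p(m,k)\) is a polynomial in \(p\). The entries of \(A^{(p)}_n\) are therefore polynomials, being differences of such values. The entries of \(\vek u^{(p)}_n\) and \(T^{(p)}_n(\delta)\) are finite sums of these, over index sets independent of \(p\), so they are polynomials too. A product of a fixed number \(r-1\) of fixed-size matrices with polynomial entries is polynomial in \(p\).

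I expect the main obstacle to be Step 1: the claim that lexification, or equivalently the clamp \(\Phi_{\vek d}\), never produces a component with faces above dimension \(n-1\). The contradiction argument above is my first attempt. If it fails, the fallback is Amata--Crupi's Definition 2.6 applied degreewise for \(j>d_{i-1}+n\), comparing the initial segment's size \(C_j\) with the capacity of the slots \(g_1,\dots,g_{i-1}\), exactly as in Lemma \ref{lem-lexproper}.
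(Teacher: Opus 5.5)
Your proposal is correct and follows the paper's proof. You rerun Lemma \ref{lem-joint} and Theorems \ref{thm-chaining} and \ref{thm-main} over \(p\) generators, add the one extra ingredient that the canonical representative respects the truncation, and then read polynomiality entrywise off Linusson's Theorem 3.3. Your contradiction argument for that ingredient is the paper's capacity inequality in complementary form, since \(x(j)\le\sum_{k\ge i}\binom p{j-d_k}\) is exactly \(C_j\ge\sum_{k<i}\binom p{j-d_k}\); you only convert it into a statement about the representative through the window structure of Lemma \ref{lem-explicit-inverse}(1), where the paper uses the initial segment of Amata--Crupi's Definition 2.6, which is your stated fallback.
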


\begin{proof}
Lemma \ref{lem-joint} and Theorems \ref{thm-chaining} and \ref{thm-main} go through verbatim over the exterior
algebra on \(p\) generators, with one extra ingredient: the lex representative must respect the
truncation, i.e. \(\mathfrak m^{n+1}F\subseteq M\) must force \(L\supseteq\mathfrak m^{n+1}F\). This
is the mirror image of the capacity argument of Lemma \ref{lem-lexproper}. In module degree
\(j=d_i+m\) with \(m\ge n+1\), every component \(k\le i\) has \(s_k(j-d_k)=0\) because
\(\tp(s_k)\le n\); hence
\[
  C_j\;\ge\;\sum_{k\le i}\binom p{j-d_k},
\]
with \(C_j\) as in that lemma's proof, and the initial segment covers
\(g_i\)'s slot. Polynomiality in \(p\) is then immediate from Theorem \ref{thm-main}'s formula, each
entry \(E^p(\tau,\rho-1)\) being a polynomial in \(p\) for fixed \(\tau,\rho\) by Linusson's
Theorem 3.3.
\end{proof}

\textbf{Status.} The reduction above is proved; the polynomiality of the entries is Linusson's Theorem 3.3,
which we cite rather than reprove, so Theorem \ref{thm-poly-p} is proved modulo that citation. It was
verified against brute-force truncated sumsets for \(n\le3\), \(p\le5\), \(r\le3\), with no
mismatches.

\begin{mexample}
(\emph{The polynomials in \(p\).}) Nine configurations, chosen to span the regimes: all gaps zero, a constant gap
\(\delta=1\), a saturating gap \(\delta=4>n\) where the summands have come apart, and two
non-constant gap vectors that are reverses of each other.
\begingroup\footnotesize
\begin{center}
\begin{tabular}{@{}ccl@{}}
  \toprule
  \(\vek d\) & gaps & \(\lvert\mathrm{FF}^{(p)}(n;\vek d)\rvert\) \\
  \midrule
  \multicolumn{3}{@{}l}{\(n=2\)}\\
  \((0,0)\)   & \((0)\)    & \(p^{3}-\tfrac12p^{2}+\tfrac52p+3\)\\
  \((0,1)\)   & \((1)\)    & \(\tfrac1{12}p^{5}+\tfrac1{12}p^{4}+\tfrac5{12}p^{3}+\tfrac{11}{12}p^{2}+\tfrac52p+4\)\\
  \((0,0,0)\) & \((0,0)\)  & \(3p^{3}-2p^{2}+5p+4\)\\
  \((0,1,2)\) & \((1,1)\)  & \(\tfrac1{24}p^{7}+\tfrac1{36}p^{6}+\tfrac13p^{5}+\tfrac{19}{36}p^{4}+\tfrac{43}{24}p^{3}+\tfrac{31}9p^{2}+\tfrac{41}6p+8\)\\
  \addlinespace
  \multicolumn{3}{@{}l}{\(n=3\)}\\
  \((0,0)\)   & \((0)\)    & \(\tfrac7{90}p^{6}-\tfrac3{10}p^{5}+\tfrac{29}{72}p^{4}+\tfrac34p^{3}-\tfrac{173}{360}p^{2}+\tfrac{51}{20}p+3\)\\
  \((0,1)\)   & \((1)\)    & \(\tfrac1{1080}p^{9}-\tfrac7{720}p^{7}+\tfrac3{80}p^{6}+\tfrac1{48}p^{5}+\tfrac5{48}p^{4}+\tfrac{473}{1080}p^{3}+\tfrac{103}{120}p^{2}+\tfrac{51}{20}p+4\)\\
  \((0,4)\)   & \((4)\)    & \(\tfrac1{32400}p^{12}-\tfrac1{5400}p^{11}+\cdots+\tfrac{2321}{3600}p^{2}+\tfrac{17}5p+4\)\\
  \((0,1,3)\) & \((1,2)\)  & \(\tfrac1{32400}p^{14}-\tfrac1{6480}p^{13}+\cdots+\tfrac{1563}{400}p^{2}+\tfrac{527}{60}p+8\)\\
  \((0,2,3)\) & \((2,1)\)  & \(\tfrac1{32400}p^{14}-\tfrac1{4050}p^{13}+\cdots+\tfrac{14299}{3600}p^{2}+\tfrac{527}{60}p+8\)\\
  \bottomrule
\end{tabular}
\captionof{table}{The counts \(\lvert\mathrm{FF}^{(p)}(n;\vek d)\rvert\) as polynomials in \(p\), for nine shift vectors.}
\label{tab-poly-in-p}
\end{center}
\endgroup
Each polynomial was obtained by exact rational interpolation at \(p=n,\dots,n+\deg+1\) and then
checked by evaluating it at three further values of \(p\) against the matrix product of
Theorem \ref{thm-poly-p}; the two longest are abbreviated in the middle, their full coefficient lists
being unenlightening. Two features are immediate. The coefficients are \textbf{not} positive, unlike the
\(r\)-direction up to \(n=6\); and the two non-constant gap vectors \((1,2)\) and \((2,1)\), reverses
of each other, give \textbf{different} polynomials. The second of those is taken up in
Remark \ref{rem-reversal-only-at-p-n} below.
\label{ex-poly-p-tables}
\end{mexample}

\begin{remark}
(\emph{Gap reversal is a \(p=n\) phenomenon.}) The last two rows of Example \ref{ex-poly-p-tables} give
different polynomials for gap vectors that are reverses of each other, while
Remark \ref{rem-gap-reversal} proves that \(\lvert\mathrm{FF}(n;\vek d)\rvert\) \textbf{is} invariant under
reversing the gaps. Both are correct, and the apparent tension locates the hypothesis. At \(p=n\) the
two polynomials take the same value, \(\lvert\mathrm{FF}(3;0,1,3)\rvert=\lvert\mathrm{FF}(3;0,2,3)\rvert=607\),
which is the symmetry; for \(p>n\) they part company, \(6696\) against \(6592\) at \(p=4\) and
\(94402\) against \(87165\) at \(p=5\). That is as it should be. The symmetry comes from the Alexander
duality of Lemma \ref{lem-alexander}, which complements inside \([n]\), and in the truncated setting of
Definition \ref{def-twoparam} the vertex bound \(p\) and the dimension bound \(n\) differ, so there is
no involution of \(\mathcal F_n^{(p)}\) with which to run the argument. The invariance is special to
the untruncated case, and no reversal symmetry should be expected of the polynomials themselves.
\label{rem-reversal-only-at-p-n}
\end{remark}

\begin{mexample}
The factors, written out. Take \(n=3\), \(r=3\) and \(\vek d=(0,1,3)\), so the gaps are
\((\delta_2,\delta_3)=(1,2)\) and the product of Theorem \ref{thm-main} is \(\vek u_3\,T_3(1)\,T_3(2)\,\vek1_3\)
with the matrices of Example \ref{ex-uT}. Multiplying left to right,
\[
  \vek u_3=(1,1,3,4,1)
  \ \xrightarrow{\ T_3(1)\ }\ (2,5,19,33,10)
  \ \xrightarrow{\ T_3(2)\ }\ (26,59,187,266,69),
\]
whose entries sum to \(\lvert\mathrm{FF}(3;0,1,3)\rvert=607\). Each application of a transfer matrix
advances one summand: the vector after \(k\) factors records, for each value of \(\tp\), how many
chained \((k+1)\)-tuples end in a summand with that \(\tp\).

Now reverse the gaps, \(\vek d=(0,2,3)\) and \((\delta_2,\delta_3)=(2,1)\). The intermediate vector is
different, \(\vek u_3\,T_3(2)=(5,9,28,39,10)\), but the total is again \(607\), which is the
gap-reversal symmetry of Remark \ref{rem-gap-reversal}. It is worth seeing that this is \textbf{not} because
the factors commute: at \(n=3\),
\[
  T_3(1)T_3(2)\ \ne\ T_3(2)T_3(1),
\]
already in the first row, \((5,9,28,39,10)\) against \((2,5,19,33,10)\). The symmetry belongs to the
bilinear form \(\vek u_3\,(\cdot)\,\vek1_3\) and to the duality behind it, not to the matrices
themselves.
\label{ex-factors}
\end{mexample}

Linusson's Theorem 3.3 and Proposition 2.8 give the degree and the leading coefficient of his own
count as a polynomial in \(p\). They transport through the product of Theorem \ref{thm-poly-p}, and the
result is clean. For each gap put \(\kappa_i=\max(0,n-\delta_i)\).

\begin{theorem}
(\emph{Degree and leading coefficient in \(p\).}) Fix \(n\ge1\), \(r\ge1\) and a sorted \(\vek d\) with
gaps \(\delta_i=d_i-d_{i-1}\). Then
\begin{equation}\label{eq:degree-in-p}
  \deg_p\lvert\mathrm{FF}^{(p)}(n;\vek d)\rvert
   \;=\; r\binom{n+1}{2}\;-\;\sum_{i=2}^{r}\binom{\kappa_i+1}{2},
\end{equation}
which is strictly increasing in each \(\delta_i\) up to \(\delta_i=n\) and constant thereafter. If
moreover every gap satisfies \(\delta_i\ge1\), then the summands decouple to leading order:
\begin{equation}\label{eq:lc-in-p}
  \lvert\mathrm{FF}^{(p)}(n;\vek d)\rvert\;\sim\;F^p(n)\prod_{i=2}^{r}E^p(n,\kappa_i)
  \qquad(p\to\infty).
\end{equation}
\label{thm-asymptotics-p}
\end{theorem}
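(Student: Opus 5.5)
The plan is to work directly with the chained-tuple expansion that underlies Theorem~\ref{thm-poly-p}. Writing the matrix product out entry by entry gives
\[
  \lvert\mathrm{FF}^{(p)}(n;\vek d)\rvert=\sum\prod_{i=1}^{r}A^{(p)}_n[\tau_i][\rho_i],
\]
where the sum runs over index sequences $(\tau_i,\rho_i)_{i=1}^r$ satisfying the chaining inequalities $\tau_{i-1}<\rho_i+\delta_i$.

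Every term is a count, so it is a polynomial in $p$ with non-negative values. I will show that each non-zero term has a positive leading coefficient. Then no cancellation can occur, and the degree of the sum is the maximum of the degrees of the terms. The leading coefficient of the sum is the sum of the leading coefficients of the terms that attain that maximum. This reduces both claims to two tasks: computing $\deg_p A^{(p)}_n[\tau][\rho]$ and its leading coefficient, and solving a discrete optimisation over chained index sequences.

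\textbf{First step: the entries.} From Linusson's Theorem~3.3 and Proposition~2.8 I take the degree and leading coefficient of $E^p(m,k)$. The heuristic behind them is that the coordinates $f_{k+1},\dots,f_m$ are essentially free, with $f_j$ ranging over roughly $\binom pj\sim p^j/j!$ values. This gives degree $\sum_{j=k+1}^{m}j=\binom{m+1}2-\binom{k+1}2$ and a positive leading coefficient, namely a product of factors $1/j!$ up to Kruskal--Katona corrections that Linusson controls.

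By Lemma~\ref{lem-joint}, $A^{(p)}_n[\tau][\rho]=E^p(\tau,\rho-1)-E^p(\tau-1,\rho-1)$. For $\rho\le\tau$ the subtrahend has strictly smaller degree, smaller by $\tau$. So $A^{(p)}_n[\tau][\rho]$ has degree $D(\tau,\rho):=\sum_{j=\rho}^{\tau}j$ with the same positive leading coefficient as $E^p(\tau,\rho-1)$. The boundary entry $\rho=\tau+1$ equals $1$ and has degree $0$, consistent with the same formula.

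\textbf{Second step: the optimisation.} I maximise $\sum_iD(\tau_i,\rho_i)$ subject to the chaining inequalities. The first summand is unconstrained, so $\rho_1=0$ and $\tau_1=n$ are optimal for it.

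The core of the argument is an exchange step. Lowering $\tau_{i-1}$ by one costs $\tau_{i-1}$ in $D(\tau_{i-1},\rho_{i-1})$. It permits $\rho_i$ to drop by one, which gains $\rho_i-1\le\tau_{i-1}-\delta_i$. For $\delta_i\ge1$ the gain is strictly less than the cost, so every $\tau_i$ should equal $n$. With all $\tau_i=n$, each $\rho_i$ should be as small as possible, namely $\rho_i=\max(0,n-\delta_i+1)=\kappa_i+1$ when $\kappa_i>0$, and $\rho_i=0$ otherwise. This yields
\[
  \binom{n+1}2+\sum_{i\ge2}\Bigl(\binom{n+1}2-\binom{\kappa_i+1}2\Bigr),
\]
which is \eqref{eq:degree-in-p}.

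For $\delta_i=0$ the exchange is a tie, cost and gain both equal to $\tau_{i-1}$. The maximum value is unchanged, but several sequences attain it. This is why the degree formula holds for all sorted $\vek d$, while the decoupled leading coefficient needs $\delta_i\ge1$. I must also check that lowering $\rho_i$ never conflicts with the next constraint. It does not, since the constraint on $\rho_{i+1}$ involves only $\tau_i$. The exchange has to be run as an induction from the last index backwards, so that a single sequence is compared against the all-$n$ one.

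\textbf{Third step: the leading coefficient.} When every $\delta_i\ge1$ the maximiser is unique, so the leading coefficient of the sum is that of the single optimal term
\[
  A^{(p)}_n[n][0]\prod_{i\ge2}A^{(p)}_n[n][\kappa_i+1].
\]
The factor $A^{(p)}_n[n][0]$ has the leading coefficient of $E^p(n,-1)$, and summing over $\rho$ does not change this to leading order. So it is asymptotic to $u^{(p)}_n[n]$, which in turn is $\sim F^p(n)$ because rows $\tau<n$ have lower degree. Likewise each factor $A^{(p)}_n[n][\kappa_i+1]$ is $\sim E^p(n,\kappa_i)$, which gives \eqref{eq:lc-in-p}.

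The monotonicity claim follows because $\binom{\kappa+1}2$ is strictly decreasing in $\delta$ while $\kappa>0$, and equals $0$ from $\delta=n$ on.

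\textbf{Main obstacle.} The exchange argument is the step I expect to need the most care. It must be set up as a clean comparison of any chained sequence with the all-$n$ sequence, including boundary cases where $\rho_i=\tau_i+1$ and the entry degenerates to the constant $1$. The step also depends on extracting from Linusson's Theorem~3.3 exactly the degree $\binom{m+1}2-\binom{k+1}2$ together with a positive leading coefficient. If his statement is phrased differently, I will rederive the degree from the free-coordinate bound directly.
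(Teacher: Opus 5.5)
Your route is the paper's: expand the product of Theorem~\ref{thm-poly-p} as a sum over paths, use the positive leading coefficients of the entries (Lemma~\ref{lem-joint} with Linusson's Theorem~3.3 and Proposition~2.8) to rule out cancellation, and maximise a separable degree function whose maximiser is unique when every $\delta_i\ge1$. The paper sums $\rho$ out first. The degree of a path then splits as $\sum_j h_{\delta_{j+1}}(\tau_j)+\binom{\tau_r+1}{2}$, with $h_\delta(t)=\binom{t+1}{2}-\binom{\max(1,t-\delta+1)}{2}$, and each $\tau_j$ is optimised independently. Your exchange argument with backward induction solves the same optimisation less directly. One detail there: the exchange is not strict at the bottom step, since $\tau=0$ and $\tau=-1$ both contribute degree $0$ and impose no constraint once $\delta_i\ge1$. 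Strictness must therefore be claimed for the comparison with the all-$n$ path as a whole. That comparison is strict because $h_\delta$ increases strictly from $0$ to $n$.

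There is one step that fails as written, at the boundary $\rho=0$. Linusson's degree formula $\binom{m+1}{2}-\binom{k+1}{2}$ cannot hold at $k=-1$, because $E^p(m,-1)=1$ for every $m\ge-1$: it counts only the void family. Hence
\[
A^{(p)}_n[\tau][0]=E^p(\tau,-1)-E^p(\tau-1,-1)=0\qquad\text{for every }\tau\ge0 .
\]
This is a total cancellation, contradicting your claim that for $\rho\le\tau$ the subtrahend has strictly smaller degree. Combinatorially, only the void family has $\indeg=0$, and it has $\tp=-1$.

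Consequences:
\begin{itemize}
\item Your choice $\rho_1=0$ is not available.
\item The term you single out in the third step, $A^{(p)}_n[n][0]\prod_{i\ge2}A^{(p)}_n[n][\kappa_i+1]$, is identically zero.
\item The sentence that $A^{(p)}_n[n][0]$ ``has the leading coefficient of $E^p(n,-1)$'', and that summing over $\rho$ does not change this, is false on both counts.
\item Your degree function $D(\tau,0)=\binom{\tau+1}{2}$ ties with $D(\tau,1)$, so even within your own accounting the maximiser is not unique as stated.
\item The same slip appears in the second step's choice $\rho_i=0$ for $\delta_i\ge n+1$, although your third step correctly uses $\kappa_i+1=\max(1,n-\delta_i+1)$.
\end{itemize}

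The repair is short. For $\tau\ge0$, let $\rho$ range over $\rho\ge1$ only. Since $\binom{0}{2}=\binom{1}{2}=0$, your degree count is unchanged and the degree formula survives; this is exactly the $\max(1,\cdot)$ in the paper's $h_\delta$. The unique non-zero maximising term is then
\[
A^{(p)}_n[n][1]\prod_{i\ge2}A^{(p)}_n[n][\kappa_i+1].
\]
Its first factor satisfies $A^{(p)}_n[n][1]\sim E^p(n,0)\sim F^p(n)$, because $F^p(n)=\sum_kE^p(n,k)$ and $E^p(n,0)$ is its unique summand of top degree $\binom{n+1}{2}$. With that correction, \eqref{eq:lc-in-p} follows as you intend.
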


\begin{proof}
By Lemma \ref{lem-joint} the entries of \(A^{(p)}_n\) are the differences
\[
  E^p(\tau,\rho-1)-E^p(\tau-1,\rho-1),
\]
and Linusson's Theorem 3.3 gives
\[
  \deg_pE^p(m,k)=\binom{m+1}{2}-\binom{k+1}{2},
\]
so the subtrahend has degree
strictly smaller (by exactly \(\tau\)) and cannot cancel the leading term. Hence
\[
  \deg_pA^{(p)}_n[\tau][\rho]=\binom{\tau+1}{2}-\binom{\rho}{2},
\]
with leading coefficient the positive quantity
of his Proposition 2.8.

Expanding the matrix product of Theorem \ref{thm-poly-p} as a sum over paths
\(\tau_1,\dots,\tau_r\), every term is a product of such entries, all with positive leading
coefficients, so again nothing cancels and the degree is the maximum over paths. The path degree
\textbf{separates position by position},
\begin{equation}\label{eq:path-degree}
  \sum_{j}h_{\delta_{j+1}}(\tau_j)+\binom{\tau_r+1}{2},
  \qquad h_\delta(t)=\binom{t+1}{2}-\binom{\max(1,t-\delta+1)}{2},
\end{equation}
so the maximisation separates into one independent choice per position. For \(\delta\ge1\)
the function \(h_\delta\) is strictly increasing on \(\lbrace0,\dots,n\rbrace\), so the maximiser is
unique (every \(\tau_j=n\)), which gives \eqref{eq:degree-in-p} and, since a unique maximising
path contributes a single product of leading coefficients, the product form \eqref{eq:lc-in-p}.
\end{proof}

\begin{remark}
At \(\delta_i=0\) the function \(h_0\) of \eqref{eq:path-degree} is identically zero, so the
maximiser is no longer unique and the leading coefficient becomes a \textbf{sum} over the degree-maximising
paths, which are pinned to \(\tau=n\) wherever a positive gap follows and free non-decreasing
across runs of zero gaps. The degree formula \eqref{eq:degree-in-p} is unaffected. The product form
\eqref{eq:lc-in-p} genuinely fails: at \(n=2\) with the single gap \(\delta_2=0\) the leading
coefficient is \(1\), against \(1/6\) for the product. This is the same phenomenon as
Proposition \ref{prop-delta0}, where vanishing gaps collapse the transfer matrix to a unipotent one,
seen now in the \(p\)-direction rather than the \(r\)-direction.

Two smaller observations follow. The degree and the leading coefficient are already those of the
saturated case at \(\delta_i=n\), one step before the threshold \(\delta_i\ge n+1\) of
Proposition \ref{prop-threshold}; the two regimes first differ in lower-order terms, the single
violating pair recorded there surviving at every \(p\). The proper variant transports as well,
which is Theorem \ref{thm-asymptotics-p-proper} below.
\label{rem-asymptotics-zero-gaps}
\end{remark}

The transport of the proper count is the same argument run on the principal block, and the outcome
is a uniform shift by one. Put
\begin{align}
  S_n^{(p)} &= \bigl\lbrace\vek s\in\mathcal F_n^{(p)}\;:\;\indeg(\vek s)\ge2\bigr\rbrace,\label{eq:def-Snp}\\
  \lvert S_n^{(p)}\rvert &= \sum_{k=1}^{n}E^p(n,k)=F^p(n)-F^{p-1}(n) ,\label{eq:card-Snp}
\end{align}
let \(\mathrm{FF}^{(p)}_{\mathrm{pr}}(n;\vek d)\) be the corresponding sumset, and write
\(\kappa_i'=\max(1,n-\delta_i)\).

\begin{theorem}
(\emph{The proper analogue.}) Let \(A'^{(p)}_n\) be the matrix \(A^{(p)}_n\) with its columns \(\rho=0\)
and \(\rho=1\) set to zero, let \(\vek u'^{(p)}_n\) be the vector of its row sums, and let
\(T'^{(p)}_n(\delta)\) be built from \(A'^{(p)}_n\) by \eqref{eq:def-u}--\eqref{eq:def-T}. Then, for every
\(p\ge n\),
\begin{equation}\label{eq:proper-p-product}
  \lvert\mathrm{FF}^{(p)}_{\mathrm{pr}}(n;\vek d)\rvert
  \;=\;\vek u'^{(p)}_n\cdot T'^{(p)}_n(\delta_2)\cdots T'^{(p)}_n(\delta_r)\cdot\vek 1_n ,
\end{equation}
and this count is a polynomial in \(p\), whose degree is
\begin{equation}\label{eq:proper-degree-in-p}
  \deg_p\lvert\mathrm{FF}^{(p)}_{\mathrm{pr}}(n;\vek d)\rvert
   \;=\; r\binom{n+1}{2}\;-\;1\;-\;\sum_{i=2}^{r}\binom{\kappa_i'+1}{2},
\end{equation}
The degree is strictly increasing in each \(\delta_i\) up to \(\delta_i=n-1\), and it is constant
thereafter. If moreover every gap satisfies \(\delta_i\ge1\), then
\begin{equation}\label{eq:proper-lc-in-p}
  \lvert\mathrm{FF}^{(p)}_{\mathrm{pr}}(n;\vek d)\rvert\;\sim\;E^p(n,1)\prod_{i=2}^{r}E^p(n,\kappa_i')
  \qquad(p\to\infty),
\end{equation}
where \(E^p(n,1)\) may be replaced by \(\lvert S_n^{(p)}\rvert\), of which it is the dominant term.
\label{thm-asymptotics-p-proper}
\end{theorem}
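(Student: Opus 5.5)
The plan follows the proof of Theorem \ref{thm-asymptotics-p} with three changes: the columns \(\rho=0,1\) are deleted, the walk lives on the block \(\lbrace1,\dots,n\rbrace\), and the first factor becomes \(\vek u'^{(p)}_n\).

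\textbf{The product formula.} Equation \eqref{eq:proper-p-product} is Theorem \ref{thm-proper} run over the exterior algebra on \(p\) generators, with the truncation \(f_j=0\) for \(j>n\). Lexification must preserve two things at once. Properness holds by Lemma \ref{lem-lexproper}, whose capacity argument does not involve \(p\). The truncation holds by the mirror capacity argument already given in the proof of Theorem \ref{thm-poly-p}. Both are conditions on the same initial segments, at low and at high module degrees respectively, so they combine with no interaction. Polynomiality in \(p\) then follows entrywise from Linusson's Theorem 3.3, exactly as before.

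\textbf{The degree.} As in Lemma \ref{lem-transfer-support}, columns \(-1\) and \(0\) of \(T'^{(p)}_n\) vanish and \(\vek u'^{(p)}_n\) vanishes there too. Hence the count is a sum over paths \(\tau_1,\dots,\tau_r\) in \(\lbrace1,\dots,n\rbrace\). Each path contributes the product of entries of \(A^{(p)}_n\) restricted to \(\rho\ge2\). Those entries have degree \(\binom{\tau+1}2-\binom\rho2\) and positive leading coefficients, so nothing cancels and the degree is a maximum over paths.

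For a fixed path, the row sum over \(\rho\ge2\) is dominated by the smallest admissible \(\rho\):
\begin{itemize}
\item for \(\vek u'\), \(\rho=2\), giving degree \(\binom{\tau_1+1}2-1\);
\item for the factor \(T'(\delta_{i})\), \(\rho=\max(2,\tau_{i-1}-\delta_i+1)\).
\end{itemize}
The path degree therefore again separates position by position. The only change from \eqref{eq:path-degree} is that \(h_\delta\) becomes
\[
h'_\delta(t)=\binom{t+1}2-\binom{\max(2,t-\delta+1)}2,
\]
and the first position loses \(1=\binom22\) instead of \(0\).

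For \(\delta\ge1\), \(h'_\delta\) is strictly increasing on \(\lbrace1,\dots,n\rbrace\). For \(\delta=0\) it is constant equal to \(0\), so \(\tau=n\) is still a maximiser. Taking \(\tau_j=n\) everywhere gives
\[
r\binom{n+1}2-1-\sum_i\binom{\max(2,n-\delta_i+1)}2,
\]
and \(\binom{\max(2,n-\delta+1)}2=\binom{\kappa'+1}2\) with \(\kappa'=\max(1,n-\delta)\). This is \eqref{eq:proper-degree-in-p}. Monotonicity in \(\delta_i\) up to \(n-1\), then constancy, can be read off \(\kappa'_i\).

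\textbf{The leading coefficient.} When all \(\delta_i\ge1\), the maximising path is unique, namely \(\tau_j=n\) throughout. Its leading term is the product of:
\begin{itemize}
\item the dominant \(\rho=2\) term of \(u'[n]\), which is \(A[n][2]\sim E^p(n,1)\);
\item for each gap, the dominant term of \(\sum_{\rho>n-\delta_i}A'[n][\rho]\), which is \(E^p(n,\kappa'_i)\).
\end{itemize}
In both cases the subtracted \(E^p(n-1,\cdot)\) has lower degree. The replacement of \(E^p(n,1)\) by \(|S^{(p)}_n|\) follows from \eqref{eq:card-Snp}, since \(\deg E^p(n,k)\) strictly decreases in \(k\).

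The main obstacle is justifying ``no cancellation'' and ``row sum dominated by the smallest \(\rho\)''. Both need the exact degree formula and the positivity of leading coefficients from Linusson's Theorem 3.3 and Proposition 2.8, applied to differences \(E^p(\tau,k)-E^p(\tau-1,k)\) at \(k\ge1\). They also need care at the boundary entries \(\rho=\tau+1\), where \(E^p(\tau,\tau)=1\) has degree \(0\). I would verify that these boundary entries never lie on a maximising path unless \(\tau=1\), where everything is consistent.
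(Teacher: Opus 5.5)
Your proposal is correct and follows the paper's own proof essentially step for step: you obtain the formula by composing the lexification arguments of Theorems \ref{thm-proper} and \ref{thm-poly-p}, confine the product to the block \(\lbrace1,\dots,n\rbrace\), and separate the path degree using \(h'_\delta\) together with the constant \(-\binom{2}{2}\) from the first factor. Drop the boundary-entry check you propose at the end: it is unnecessary, because the formula \(\binom{\tau+1}{2}-\binom{\rho}{2}\) already gives degree \(0\) with leading coefficient \(1\) at \(\rho=\tau+1\), and as you state it the check is false for \(n\ge2\) whenever some \(\delta_i=0\), since the all-\(n\) maximiser then passes through \(T'^{(p)}_n(0)[n][n]=A^{(p)}_n[n][n+1]=1\).
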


\begin{proof}
For the formula, the proofs of Theorems \ref{thm-proper} and \ref{thm-poly-p} compose. Over \(p\)
generators the capacity argument of Lemma \ref{lem-lexproper} reads \(s_k(0)=1=\binom p0\) and
\(s_k(1)=p=\binom p1\) and goes through verbatim, so the lex representative of a properly
achievable Hilbert function is again proper, and deleting the \(\indeg\le1\) columns implements
the restriction, as in Theorem \ref{thm-proper}.

For the asymptotics, observe first that row \(\tau=-1\) of \(A^{(p)}_n\) is \(e_0\) and row
\(\tau=0\) is \(e_1\), so that both of them die with the deleted columns. Hence \(\vek u'^{(p)}_n\) vanishes at \(\tau=-1,0\) and the columns
\(\tau'=-1,0\) of every \(T'^{(p)}_n(\delta)\) vanish, confining the whole product to the principal
block \(\tau\in\lbrace1,\dots,n\rbrace\), the restriction that already repaired the proper half of
Theorem \ref{thm-delta0-degrees}, now at every gap simultaneously. On the block every surviving
entry is an untouched entry of \(A^{(p)}_n\), with the degrees and positive leading coefficients of
Theorem \ref{thm-asymptotics-p}'s proof; the inner sums now start at \(\max(2,\tau-\delta+1)\)
rather than \(\max(1,\tau-\delta+1)\), so nothing cancels, and the path degree separates as in
\eqref{eq:path-degree} with \(h_\delta\) replaced by
\begin{equation}\label{eq:h-prime}
  h'_\delta(t)=\binom{t+1}{2}-\binom{\max(2,\,t-\delta+1)}{2} ,
\end{equation}
and a constant \(-\binom22=-1\), the same for every path, contributed by the first factor. On \(\lbrace1,\dots,n\rbrace\), \(h'_0\equiv0\) and \(h'_\delta\)
is strictly increasing for \(\delta\ge1\), so the unique maximiser is again the all-\(n\) path;
its factors have degrees \(\binom{n+1}{2}-1\) and \(\binom{n+1}{2}-\binom{\kappa_i'+1}{2}\), with leading
coefficients those of \(E^p(n,1)\) and \(E^p(n,\kappa_i')\) by Linusson's Proposition 2.8, which
gives \eqref{eq:proper-degree-in-p} and \eqref{eq:proper-lc-in-p}.
\end{proof}

\begin{remark}
At \(\delta_i=0\) the leading coefficient again becomes a sum over the degree-maximising paths,
now confined to the principal block. At \(n=2\) with all \(r-1\) gaps zero that sum equals
\(r/2\), because there are \(r\) weakly increasing chains
\(1\le\tau_1\le\dots\le\tau_{r-1}\le2\) and each of them carries a single factor
\(\tfrac12\).

The one-step window that the improper count leaves between saturation and threshold closes here.
Over \(S_n^{(p)}\) the statistics satisfy \(\tp\le n\) and \(\indeg\ge2\) at every \(p\), so
\(\lvert\mathrm{FF}^{(p)}_{\mathrm{pr}}(n;\vek d)\rvert=\lvert S_n^{(p)}\rvert^{\,r}\) holds \textbf{exactly}
when every \(\delta_i\ge n-1\), which is the \(p\)-analogue of
Proposition \ref{prop-proper-threshold} and is also the point at which \(\kappa_i'\), and hence the
degree, saturates. The reason the two thresholds coincide here and not in the improper case is that
the improper violating pair at \(\delta_i=n\) was the full truncation against the void family,
and the void family is not proper.

The verification was carried out in the same style as for Theorem \ref{thm-asymptotics-p}. The degree
and the leading coefficient were checked by exact rational interpolation over \(132\)
configurations with \(n\le5\) and \(r\le4\), with gaps \(0\) to \(n+2\) at \(r=2\) and \(0\) to
\(n+1\) beyond, reaching degrees as high as \(37\); and the transfer formula \eqref{eq:proper-p-product} was checked against brute-force
proper truncated sumsets for \(n\le3\), \(p\le5\) and \(r\le3\). No mismatch was found.
\label{rem-proper-p}
\end{remark}

\begin{mexample}
(\emph{The two degree formulas on the configurations of Example \ref{ex-poly-p-tables}.}) With
\(\kappa_i=\max(0,n-\delta_i)\) and \(\kappa_i'=\max(1,n-\delta_i)\) as in the two theorems, the
degrees predicted by \eqref{eq:degree-in-p} and \eqref{eq:proper-degree-in-p} against the degrees of
the interpolated polynomials:
\begingroup\small
\begin{center}
\begin{tabular}{@{}cccrrrr@{}}
  \toprule
  \(n\) & \(\vek d\) & gaps & \(\deg_p\lvert\mathrm{FF}^{(p)}(n;\vek d)\rvert\) & \eqref{eq:degree-in-p}
        & \(\deg_p\lvert\mathrm{FF}^{(p)}_{\mathrm{pr}}(n;\vek d)\rvert\) & \eqref{eq:proper-degree-in-p}\\
  \midrule
  2 & \((0,0)\)   & \((0)\)   & 3  & 3  & 2  & 2\\
  2 & \((0,1)\)   & \((1)\)   & 5  & 5  & 4  & 4\\
  2 & \((0,0,0)\) & \((0,0)\) & 3  & 3  & 2  & 2\\
  2 & \((0,1,2)\) & \((1,1)\) & 7  & 7  & 6  & 6\\
  \addlinespace
  3 & \((0,0)\)   & \((0)\)   & 6  & 6  & 5  & 5\\
  3 & \((0,1)\)   & \((1)\)   & 9  & 9  & 8  & 8\\
  3 & \((0,4)\)   & \((4)\)   & 12 & 12 & 10 & 10\\
  3 & \((0,1,3)\) & \((1,2)\) & 14 & 14 & 13 & 13\\
  3 & \((0,2,3)\) & \((2,1)\) & 14 & 14 & 13 & 13\\
  \bottomrule
\end{tabular}
\captionof{table}{Predicted against interpolated degrees in \(p\), improper and proper.}
\label{tab-degree-check}
\end{center}
\endgroup
Every entry agrees. Three things in the table are the theorems rather than the data. The degree is
one lower in the proper column in every row whose gaps are all below \(n\), which is the \(-1\) of
\eqref{eq:proper-degree-in-p}, contributed by the first factor alone and so independent of the
gaps. A gap \(\delta_i\ge n\) costs one more, since there \(\kappa_i=0\) but \(\kappa_i'=1\), which is
why the seventh row is two lower. That row has
\(\delta=4>n=3\), where \(\kappa_i=0\) and the degree has saturated: raising the gap further changes
nothing, which is the ``constant thereafter'' clause of Theorem \ref{thm-asymptotics-p}. And the last two
rows agree in both columns although the underlying polynomials differ, the degree depending on the
gaps only through the multiset \(\lbrace\kappa_i\rbrace\) while the polynomial depends on their order.

The leading coefficients behave as the two theorems and Remark \ref{rem-asymptotics-zero-gaps} predict:
the product forms \eqref{eq:lc-in-p} and \eqref{eq:proper-lc-in-p} hold in the six rows whose gaps
are all at least \(1\), and fail in the three rows with a zero gap, where the leading coefficient
becomes a sum over several degree-maximising paths. At \(n=2\) with \(\vek d=(0,0)\) the true improper
leading coefficient is \(1\), against \(1/6\) for the product.
\label{ex-degrees-p}
\end{mexample}
\section{Verification, tables, and open problems}
\label{sec-verification}
\subsection{How the results were checked}
\label{sec:org1c61e3f}

Every claim above labelled ``verified'' was checked against a route independent of the formula being
tested. Specifically:

\begin{itemize}
\item \textbf{Against brute force.} Direct enumeration of the order ideals of \(\mathcal P([n])\), forming the
sumset explicitly, with no recursion and no Kruskal--Katona: agreement for all \(41\)
shift vectors tried, each in both the proper and the improper variant (\(82\) comparisons), with
\(n\le6\), \(r\le4\) and gaps \(0\) to \(6\), including unequal gaps such as
\(\vek d=(0,0,1)\) and \((0,1,3)\). The range is a union rather than a box: \(r=4\) occurs only
for \(n\le3\), and \(n=6\) only for \(r=2\). Lemma \ref{lem-transfer-support}, and with it the
irreducibility and aperiodicity drawn from it, was checked this way as well, from an \(A_n\)
built by direct enumeration with no recursion, for \(n\le6\) and every \(\delta\le n+2\); it
agrees everywhere with the same checks run on the \(A_n\) of Lemma \ref{lem-joint}.
\item \textbf{Against Linusson's own tables.} All \(199\) printed entries of Tables 5 and 6 in the appendix of
his preprint \autocite{Linusson1999preprint}, exactly; those tables are not in the published version
(Remark \ref{rem-transcription}).
\item \textbf{Against a computer-algebra oracle.} Amata and Crupi's \texttt{ExteriorModules} package
\autocite{AmataCrupi2021} for Macaulay2 \autocite{Macaulay2}
enumerates lex submodules directly. Its counts \(43\), \(69\), \(99\) and \(236\) for \(n=3\) with
\(d=(0,0)\), \((0,1)\), \((-1,2)\) and \((0,0,1)\) are reproduced, as is \(368\) for \(n=4\),
\(d=(0,1)\). The case \((-1,2)\) exercises the negative-degree hypothesis.
\item \textbf{Against a proof assistant.} The combinatorial content of this paper has been formalised in Lean 4
with Mathlib, in the same development as the companion article's: the numbered definitions, lemmas,
propositions, theorems and corollaries of Sections \ref{sec-statistics} to \ref{sec-growth}, with
the exceptions set out below; the remarks and examples are not formalised, except where a remark
names a formal statement. Shift vectors are taken non-negative in the formalisation, which loses
nothing by Remark \ref{rem-shift-only-gaps}. Every certified
statement rests on the three standard axioms of Lean's kernel (\texttt{propext}, \texttt{Classical.choice},
\texttt{Quot.sound}) and on no others, and the development contains no incomplete proof.

What follows describes what is \textbf{not} certified. That is the more useful thing to publish, because
it tells the reader what they are being asked to take on trust, and because it does not go out of
date as the development grows. Three things lie outside.

\textbf{The algebra, in two quite different sizes.} None of Definition \ref{def-setup} is formalised: not
the exterior algebra, not monomial or lex submodules, and with them neither Lemma
\ref{lem-dictionary}, nor Lemma \ref{lem-lexproper}, nor the surjectivity half of Theorem
\ref{thm-chaining}. These are not equally far away, and treating them alike would misdescribe the
work involved. Lemma \ref{lem-dictionary} is \textbf{elementary}: its two clauses are a spanning
observation and a dimension count, and what it lacks is the objects rather than an argument. It
needs a graded \(\Ext\) with accessible graded pieces, monomial ideals, \(\mathfrak m^t\), and the
correspondence between monomial ideals and order ideals that makes \(\HF\bigl((\Ext)/I\bigr)\) an
\(f\)-vector. The surjectivity half of Theorem \ref{thm-chaining} needs all of that \textbf{and} Amata
and Crupi's theory of lex submodules: the equivalence of Definitions 2.6 and 2.7 of
\autocite{AmataCrupi2020KK}, the
existence and uniqueness of a lex representative in \autocite[Theorem 4.2]{AmataCrupi2020KK}, and Kruskal--Katona in
the exterior form of Aramova, Herzog and Hibi~\cite{AramovaHerzogHibi1997}. That is a substantially larger
undertaking, and is in any case open combinatorially. The formal development is
purely combinatorial: it counts order ideals and their \(f\)-vectors and never constructs a
submodule. Consequently, wherever a result needs the bridge from the combinatorics to the algebra,
that bridge is an explicit hypothesis rather than a proof. The counting identities of Theorems
\ref{thm-main} and \ref{thm-proper} are certified \textbf{unconditionally} as statements about chained
tuples. Every statement about \(\lvert\mathrm{FF}(n;\vek d)\rvert\) or
\(\lvert\mathrm{FF}_{\mathrm{pr}}(n;\vek d)\rvert\) (those two theorems, Corollary \ref{cor-r1},
Propositions \ref{prop-threshold}, \ref{prop-proper-threshold} and \ref{prop-normalform}, Theorem
\ref{thm-delta0-degrees}, and the reading of Proposition \ref{prop-recurrence} as a statement about
counts) is certified given that hypothesis, which the paper itself discharges algebraically, in
Theorem \ref{thm-chaining} and Lemma \ref{lem-lexproper}. This is a different thing from the
``proved modulo'' of the table in Section \ref{sec-intro}, which refers only to Linusson's
Theorem 3.3.

\textbf{Linusson's results.} Theorem \ref{thm-linusson} and his Theorem 3.3 are cited rather than reproved,
so Lemma \ref{lem-joint} and everything resting on them is outside the formalisation: Theorem
\ref{thm-poly-p} and Theorems \ref{thm-asymptotics-p} and \ref{thm-asymptotics-p-proper}. For Lemma
\ref{lem-joint} this is deliberate and costs nothing, since \(A_n\) is defined by counting in
Definition \ref{def-A} and the recursion is nowhere needed.

\textbf{What ``Perron root'' is not.} Perron--Frobenius theory is not available in the library used. Of
Proposition \ref{prop-perron-threshold} what is certified is therefore the eigenvalue statement, for
both roots: above its threshold each of \(T_n(\delta)\) and \(T'_n(\delta)\) has rank one, the vector
\(\vek 1_n\) is an eigenvector with eigenvalue \(\lvert\mathcal F_n\rvert\) and
\(\lvert S_n\rvert\) respectively, and no other eigenvalue is non-zero. What is not certified is
the word \textbf{Perron},
that this is the eigenvalue of largest modulus with a non-negative eigenvector, and in the
rank-one case the gap is only the comparison against \(0\), but it is a gap. The same limitation
leaves the growth claim of Proposition \ref{prop-recurrence} outside, and there, unlike here, no
degeneracy makes the gap small; and it leaves Proposition \ref{prop-growth-constant} outside
altogether, that proposition being an appeal to the same theory in the sharper form of
\autocite[Theorem V.7]{FlajoletSedgewick2009}. Lemma \ref{lem-transfer-support}, which supplies that
proposition's hypotheses, \textbf{is} certified, in both halves and unconditionally in \(\delta\);
being purely combinatorial, it needed none of the algebra. Its \(\delta=0\) case re-derives the
block-triangularity of \(T_n(0)\) that the development had previously obtained separately, and
both derivations are kept. Proposition \ref{prop-recurrence}'s other half \textbf{is} certified: the linear recurrence
follows from the Cayley--Hamilton theorem, needs no spectral theory at all, and holds for both
matrices unconditionally.

The formalisation turned up three kinds of thing, one for each way a hypothesis can be wrong.
Several of the strengthenings recorded above were found here, each of them a hypothesis that
turned out to be unnecessary. One hypothesis turned out to be \textbf{missing}, namely
Theorem \ref{thm-proper}'s \(n\ge1\), without which the statement is false at \(n=0\), where
\(A'_n\) vanishes identically while
\(\lvert\mathrm{FF}_{\mathrm{pr}}(0;\vek d)\rvert=1\). And one turned out to be \textbf{exactly
sharp}: the proper half of Lemma \ref{lem-transfer-support} is restricted to the principal block
\(\tau'\in\lbrace1,\dots,n\rbrace\), and formalising it is what established that the
restriction cannot be relaxed to \(\tau'\ge0\). It cannot, because the column \(\tau'=0\) of
\(T'_n(\delta)\) vanishes identically for every \(\delta\) while the support condition
\(\tau'\ge\tau-\delta\) holds for every \(\tau\) once \(\delta\) is large; so the
equivalence would fail in the worst way, with a false left-hand side and a true right-hand one.
The boundary is the exact complement of the one the development had already proved for the
vanishing columns, which is the sort of agreement that makes a formalisation worth having.

\item \textbf{Against previously recorded values.} The two sequences of Remark \ref{rem-proper-sequences}.
\item \textbf{Internal consistency.} The \(r=1\) specialisation (Corollary \ref{cor-r1}) and the product-regime
identities (Propositions \ref{prop-threshold} and \ref{prop-proper-threshold}) for \(n=2,\dots,6\),
\(r\le4\), both variants.
\item \textbf{Against a second form of the same formula.} Some entries compare two expressions of the count
rather than the count with an enumeration, because the order ideals of \(\mathcal P([n])\) cannot
be listed beyond small \(n\): Theorem \ref{thm-delta0-degrees} for \(n\le8\) and \(r\le n+4\) (the
binomial expansion of its proof against the transfer product); Proposition \ref{prop-normalform} for
\(n\le8\) and \(\delta\le n+3\), exactly; Proposition \ref{prop-perron-threshold} for \(n\le6\), in
floating point and exactly at the threshold; the gap reversal of Remark \ref{rem-gap-reversal}
for \(n\le6\) and six gap vectors; Lemma \ref{lem-transfer-support} for \(n\le9\) and
\(\delta\le n+3\), exactly, with the irreducibility for \(\delta\ge1\), its failure at
\(\delta=0\) and the loop at every vertex read off the same matrices; and the conclusion of
Proposition \ref{prop-growth-constant} numerically, the ratio \(a_r/\lambda^{\,r}\) settling to six
decimal places by \(r=14\) for \(n\le5\) and \(\delta\ge1\) and diverging at \(\delta=0\), which
is the counterexample of Remark \ref{rem-recurrence-not-asymptotic} reproducing itself. That last
check is the one that would have caught a misreading of the cited theorem's indexing, which the
structural checks could not. These confirm the algebra, not the count.
\item \textbf{Interpolated polynomials.} The polynomials of Examples \ref{tab-delta0-polys} and
\ref{ex-poly-p-tables} were fitted in exact arithmetic and checked at ten further points; they agree
with brute-force counts at \(n\le4\), \(r\le3\) and at \(n=5\), \(r=2\) for the first, and at
\(p\le6\) for the second. Theorems \ref{thm-asymptotics-p} and \ref{thm-asymptotics-p-proper} were
checked by exact interpolation over \(132\) configurations with \(n\le5\) and \(r\le4\), and the
transfer formula in \(p\) against brute-force truncated sumsets for \(n\le3\), \(p\le5\) and
\(r\le3\).
\end{itemize}

No mismatch was found in any of these.
\subsection{Code, formalisation and tables}
\label{sec-availability}
The arXiv version of this paper carries an ancillary directory, \texttt{anc/}, listed on its abstract
page. It holds three things, each described in \texttt{anc/README.md}.

\begin{itemize}
\item \texttt{anc/lean/}: the Lean 4 development described above, with its own \texttt{README.md} giving the
correspondence between its declarations and the numbered statements here and in the companion
paper.
\item \texttt{anc/verification-code/}: the scripts behind every computational check listed above, in Python,
SageMath, Macaulay2 and Julia. Two of them do most of the work: \texttt{linusson\_ff\_counting.py},
which holds the transfer-matrix count, the brute-force enumeration of the sumset and the
comparison with Linusson's tables; and \texttt{oneoff/m2\_oracle\_ff\_counts.m2}, the Macaulay2 oracle.
The oracle needs Amata and Crupi's \texttt{ExteriorModules} package \autocite{AmataCrupi2021}, which is
distributed with Macaulay2 and is loaded by \texttt{needsPackage}; it is not copied here, and
\texttt{anc/README.md} says what to do on an older Macaulay2.
\item \texttt{anc/REFERENCE-CERTIFICATE.md}: for every reference this paper cites, the bibliographic data,
where a primary copy was obtained, and for each locator the passage it is meant to support,
quoted. The paper was written with substantial language-model assistance and says so below;
this is the check behind the claim that its reference list was verified against primary sources.
\item \texttt{anc/tables/}: the counts \(\lvert\mathrm{FF}(n;\vec d\,)\rvert\) and
\(\lvert\mathrm{FF}_{\mathrm{pr}}(n;\vec d\,)\rvert\) for \(1\le n\le 8\), \(1\le r\le 4\) and every
gap vector with gaps at most \(n+1\), which is all of them in that range; the constant-gap
families for \(r\le 8\); and the matrices \(A_n\) for \(n\le 8\). All are plain CSV files.
\end{itemize}

\textbf{\textbf{The recorded outputs of the runs are not in \texttt{anc/}}}, and neither are the raw data files the
companion paper's polytope computations produced; together they come to some 230 megabytes, which
is more than an ancillary directory should carry. They, and the material above kept current, are
in the public repository

\begin{center}
\url{https://gitlab.liu.se/jansn19/exterior-convex}
\end{center}

Everything the verification section above reports can be recomputed from the scripts in \texttt{anc/}
alone. The repository is where to look for what a particular run actually printed, and for the
Lean development's build artefacts and history.
\subsection{Open problems}
\label{sec:orge3a51f9}

Three questions that arose during this work are settled here and are not repeated as open: the
asymptotics in \(p\) are now Theorems \ref{thm-asymptotics-p}
and \ref{thm-asymptotics-p-proper} with Remark \ref{rem-asymptotics-zero-gaps}, the exact degrees at
\(\delta=0\) are Theorem \ref{thm-delta0-degrees}, and the identity \(\lvert S_n\rvert=F^n(n-2)\)
is Proposition \ref{prop-fnn2}. Two problems remain.

\begin{enumerate}
\item \textbf{A bijective proof.} Theorem \ref{thm-chaining} is proved by way of a canonical representative;
a direct bijection between \(\mathrm{FF}(n;\vek d)\) and the chained tuples, not routed through lex
submodules, would explain the constraint combinatorially rather than algebraically. Remark
\ref{rem-gap-reversal} is a constraint on any answer rather than a step toward one: such a
bijection must be compatible with the gap-reversing involution recorded there. The explicit
inverse of Lemma \ref{lem-explicit-inverse} is half of an answer: it is closed-form, its
uniqueness is elementary, and it is equivariant under that involution. What is missing is a
direct proof that its clamps are \(f\)-vectors.
\item \textbf{Beyond constant gaps.} The formula handles an arbitrary sorted \(\vek d\), but the growth results of
Section \ref{sec-growth} assume a constant gap. Non-constant gap vectors give products of distinct
matrices. Proposition \ref{prop-normalform} is the structural answer, and it reduces the problem
to the pair \((B_n,P_n)\); the count also factorises across any saturated gap. What it does not give
is a closed form, and the natural routes to one are closed off as far as we have checked: no two
distinct \(T_n(\delta)\) commute (\(n\le8\)), and they generate the full matrix algebra
(\(n\le5\)), so there is no common triangularisation or invariant filtration, and the only permutation symmetry is the reversal of
\eqref{eq:no-permutation-invariance}.
\end{enumerate}
\subsection{Integer sequences}
\label{sec:org31e9c17}

Seven sequences arising here appear not to be in the OEIS (searched October 2026): \(|S_n|\) itself, the \(r=2\)
proper and improper counts at \(\delta=0\) and \(\delta=1\), one \(r=3\) example, and the proper
count at \(n=4\), \(\delta=0\) read along \(r\), that is \(16,69,184,385,696\), the row of
Table \ref{tab-app-pro}. Of
these \(|S_n|=1,2,5,16,70,457,4908,95248,3617645,286007155\), the number of \(f\)-vectors of
simplicial complexes on exactly \(n\) vertices, seems the most natural candidate for submission,
being the first-difference sequence of A007695, which has no separate entry. Nothing has been
submitted.

\appendix
\section{Tables}
\label{sec-app-tables}
\label{sec:app-tables}
Linusson closes the preprint version of his paper \autocite{Linusson1999preprint} with two tables,
and they are part of why it is usable rather than
merely correct: his Table 5 gives the counts \(F^p(n)\) themselves, and his Table 6 gives
\(E^p(p,k)\),
the refinement his recursion actually runs on, with \(F^p(p)\) recovered as a row sum in the last
column. This appendix gives the analogues, in the same pairing and the same order. Every entry
was produced by the formulas of this paper and checked against direct enumeration of the
sumset, with no recursion and no transfer matrix, wherever that was feasible: every entry of both
count tables with \(n\le5\), except the improper count for \(n=5\), \(r=5\), \(\delta\ge3\), and for
\(n=6\) the entries with \(r\le3\), with \(r=4\), \(\delta\le1\), and with \(r=5\), \(\delta=0\). That is
\(103\) of the \(120\) improper and \(107\) of the \(120\) proper entries with \(r\ge2\), with no
mismatch.
\subsection{The joint distribution}
\label{sec:orgf695fd1}

Table \ref{tab-app-A} is the analogue of Linusson's Table 6: the matrix \(A_n\) of
Definition \ref{def-A}, which is what the transfer-matrix formula is built from, for
\(n=1,\dots,6\). Rows are indexed by \(\tp\) and columns by \(\indeg\), both as in
\eqref{eq:def-A-range}; a dot is a zero. The last column is the row sum \(u_n[\tau]\) of
\eqref{eq:def-u}, so reading it downwards gives the vector \(\vek u_n\) of Theorem \ref{thm-main},
and the bold entry is the grand total \(\lvert\mathcal F_n\rvert\). The support condition
\(\rho\le\tau+1\) of Definition \ref{def-A} is visible as the staircase, and the two extreme rows
are the single entries \(A_n[-1][0]=1\) and \(A_n[n][n+1]=1\) named there. The \(n=3\) block is the
matrix displayed in Example \ref{ex-A}.

The correspondence with Linusson's Table 6 is in fact an identity and not merely an analogy.
Summing \eqref{eq:Ap} at \(p=n\) over \(\tau\) telescopes,
\begin{equation}\label{eq:colsum-is-E}
  \sum_{\tau=-1}^{n}A_n[\tau][\rho]\;=\;E^n(n,\rho-1) ,
\end{equation}
the subtracted terms cancelling in pairs, and the boundary term vanishing because nothing sits
below the void family, so that \(E^p(m,k)\) is zero once \(m\le-2\). The bottom row of each block
below is therefore \textbf{exactly} the corresponding row of his Table 6,
re-indexed by \(\rho=k+1\), and the grand total is his \(F^n(n)\). Our refinement splits his by
\(\tp\); his is recovered by forgetting that split.

\begingroup\small
\begin{center}
\begin{tabular}{@{}r|rrr|r@{}}
\multicolumn{5}{@{}l}{\(n=1\)}\\[2pt]
\(\tau\backslash\rho\) & 0 & 1 & 2 & \(u_n[\tau]\)\\
\hline
\(-1\) & 1 & \(\cdot\) & \(\cdot\) & \(1\)\\
\(0\) & \(\cdot\) & 1 & \(\cdot\) & \(1\)\\
\(1\) & \(\cdot\) & \(\cdot\) & 1 & \(1\)\\
\hline
\(\Sigma\) & 1 & 1 & 1 & \(\mathbf{3}\)\\
\end{tabular}
\end{center}
\addvspace{4pt}
\begin{center}
\begin{tabular}{@{}r|rrrr|r@{}}
\multicolumn{6}{@{}l}{\(n=2\)}\\[2pt]
\(\tau\backslash\rho\) & 0 & 1 & 2 & 3 & \(u_n[\tau]\)\\
\hline
\(-1\) & 1 & \(\cdot\) & \(\cdot\) & \(\cdot\) & \(1\)\\
\(0\) & \(\cdot\) & 1 & \(\cdot\) & \(\cdot\) & \(1\)\\
\(1\) & \(\cdot\) & 1 & 1 & \(\cdot\) & \(2\)\\
\(2\) & \(\cdot\) & \(\cdot\) & \(\cdot\) & 1 & \(1\)\\
\hline
\(\Sigma\) & 1 & 2 & 1 & 1 & \(\mathbf{5}\)\\
\end{tabular}
\end{center}
\addvspace{4pt}
\begin{center}
\begin{tabular}{@{}r|rrrrr|r@{}}
\multicolumn{7}{@{}l}{\(n=3\)}\\[2pt]
\(\tau\backslash\rho\) & 0 & 1 & 2 & 3 & 4 & \(u_n[\tau]\)\\
\hline
\(-1\) & 1 & \(\cdot\) & \(\cdot\) & \(\cdot\) & \(\cdot\) & \(1\)\\
\(0\) & \(\cdot\) & 1 & \(\cdot\) & \(\cdot\) & \(\cdot\) & \(1\)\\
\(1\) & \(\cdot\) & 2 & 1 & \(\cdot\) & \(\cdot\) & \(3\)\\
\(2\) & \(\cdot\) & 1 & 2 & 1 & \(\cdot\) & \(4\)\\
\(3\) & \(\cdot\) & \(\cdot\) & \(\cdot\) & \(\cdot\) & 1 & \(1\)\\
\hline
\(\Sigma\) & 1 & 4 & 3 & 1 & 1 & \(\mathbf{10}\)\\
\end{tabular}
\end{center}
\addvspace{4pt}
\begin{center}
\begin{tabular}{@{}r|rrrrrr|r@{}}
\multicolumn{8}{@{}l}{\(n=4\)}\\[2pt]
\(\tau\backslash\rho\) & 0 & 1 & 2 & 3 & 4 & 5 & \(u_n[\tau]\)\\
\hline
\(-1\) & 1 & \(\cdot\) & \(\cdot\) & \(\cdot\) & \(\cdot\) & \(\cdot\) & \(1\)\\
\(0\) & \(\cdot\) & 1 & \(\cdot\) & \(\cdot\) & \(\cdot\) & \(\cdot\) & \(1\)\\
\(1\) & \(\cdot\) & 3 & 1 & \(\cdot\) & \(\cdot\) & \(\cdot\) & \(4\)\\
\(2\) & \(\cdot\) & 4 & 5 & 1 & \(\cdot\) & \(\cdot\) & \(10\)\\
\(3\) & \(\cdot\) & 1 & 4 & 3 & 1 & \(\cdot\) & \(9\)\\
\(4\) & \(\cdot\) & \(\cdot\) & \(\cdot\) & \(\cdot\) & \(\cdot\) & 1 & \(1\)\\
\hline
\(\Sigma\) & 1 & 9 & 10 & 4 & 1 & 1 & \(\mathbf{26}\)\\
\end{tabular}
\end{center}
\addvspace{4pt}
\begin{center}
\begin{tabular}{@{}r|rrrrrrr|r@{}}
\multicolumn{9}{@{}l}{\(n=5\)}\\[2pt]
\(\tau\backslash\rho\) & 0 & 1 & 2 & 3 & 4 & 5 & 6 & \(u_n[\tau]\)\\
\hline
\(-1\) & 1 & \(\cdot\) & \(\cdot\) & \(\cdot\) & \(\cdot\) & \(\cdot\) & \(\cdot\) & \(1\)\\
\(0\) & \(\cdot\) & 1 & \(\cdot\) & \(\cdot\) & \(\cdot\) & \(\cdot\) & \(\cdot\) & \(1\)\\
\(1\) & \(\cdot\) & 4 & 1 & \(\cdot\) & \(\cdot\) & \(\cdot\) & \(\cdot\) & \(5\)\\
\(2\) & \(\cdot\) & 10 & 9 & 1 & \(\cdot\) & \(\cdot\) & \(\cdot\) & \(20\)\\
\(3\) & \(\cdot\) & 9 & 24 & 9 & 1 & \(\cdot\) & \(\cdot\) & \(43\)\\
\(4\) & \(\cdot\) & 1 & 9 & 10 & 4 & 1 & \(\cdot\) & \(25\)\\
\(5\) & \(\cdot\) & \(\cdot\) & \(\cdot\) & \(\cdot\) & \(\cdot\) & \(\cdot\) & 1 & \(1\)\\
\hline
\(\Sigma\) & 1 & 25 & 43 & 20 & 5 & 1 & 1 & \(\mathbf{96}\)\\
\end{tabular}
\end{center}
\addvspace{4pt}
\begin{center}
\begin{tabular}{@{}r|rrrrrrrr|r@{}}
\multicolumn{10}{@{}l}{\(n=6\)}\\[2pt]
\(\tau\backslash\rho\) & 0 & 1 & 2 & 3 & 4 & 5 & 6 & 7 & \(u_n[\tau]\)\\
\hline
\(-1\) & 1 & \(\cdot\) & \(\cdot\) & \(\cdot\) & \(\cdot\) & \(\cdot\) & \(\cdot\) & \(\cdot\) & \(1\)\\
\(0\) & \(\cdot\) & 1 & \(\cdot\) & \(\cdot\) & \(\cdot\) & \(\cdot\) & \(\cdot\) & \(\cdot\) & \(1\)\\
\(1\) & \(\cdot\) & 5 & 1 & \(\cdot\) & \(\cdot\) & \(\cdot\) & \(\cdot\) & \(\cdot\) & \(6\)\\
\(2\) & \(\cdot\) & 20 & 14 & 1 & \(\cdot\) & \(\cdot\) & \(\cdot\) & \(\cdot\) & \(35\)\\
\(3\) & \(\cdot\) & 43 & 84 & 19 & 1 & \(\cdot\) & \(\cdot\) & \(\cdot\) & \(147\)\\
\(4\) & \(\cdot\) & 25 & 143 & 84 & 14 & 1 & \(\cdot\) & \(\cdot\) & \(267\)\\
\(5\) & \(\cdot\) & 1 & 25 & 43 & 20 & 5 & 1 & \(\cdot\) & \(95\)\\
\(6\) & \(\cdot\) & \(\cdot\) & \(\cdot\) & \(\cdot\) & \(\cdot\) & \(\cdot\) & \(\cdot\) & 1 & \(1\)\\
\hline
\(\Sigma\) & 1 & 95 & 267 & 147 & 35 & 6 & 1 & 1 & \(\mathbf{553}\)\\
\end{tabular}
\end{center}
\endgroup
\captionof{table}{The joint distribution \(A_n[\tau][\rho]\) of \((\tp,\indeg)\) on \(\mathcal F_n\),
for \(n=1,\dots,6\).}
\label{tab-app-A}
\subsection{The counts}
\label{sec:orgabd7e94}

Tables \ref{tab-app-imp} and \ref{tab-app-pro} are the analogue of
Linusson's Table 5. His is indexed by his two parameters \(p\) and \(n\); ours needs three, since
a count here depends on \(n\), on the rank \(r\), and on the shift vector. The tables therefore
fix the shift vector to the constant-gap family
\(\vek d=(0,\delta,2\delta,\dots,(r-1)\delta)\), which is the one Section \ref{sec-growth} works
in and the one in which the thresholds are visible.

\textbf{The \(r=1\) column is a check, and it is where the two papers meet.} At rank one there is no
gap to vary, so that column is constant down each block; and there is no transfer matrix either,
the product of Theorem \ref{thm-main} degenerating to \(\vek u_n\cdot\vek 1_n\). It therefore reads
\(\lvert\mathcal F_n\rvert\) in Table \ref{tab-app-imp} and \(\lvert S_n\rvert\) in
Table \ref{tab-app-pro}. The first of those, \(5,10,26,96,553\), is exactly the
diagonal \(F^n(n)\) of Linusson's Table 5 (equivalently, the last column of his Table 6), which
is Corollary \ref{cor-r1} read off a table instead of proved. The second, \(2,5,16,70,457\), is the
sequence \(\lvert S_n\rvert\) of Section \ref{sec-verification}.

Three things in them are stated as results elsewhere and can be read off directly. The improper
count reaches \(\lvert\mathcal F_n\rvert^{\,r}\) at \(\delta=n+1\) and the proper count reaches
\(\lvert S_n\rvert^{\,r}\) at \(\delta=n-1\), which is Propositions \ref{prop-threshold} and
\ref{prop-proper-threshold}; the two-step gap between the thresholds is accounted for by the pairs
that fail at \(\delta=n-1\) and \(\delta=n\): in every one of them the later summand has
\(\indeg\le1\), so it is the void family or a complex missing a vertex, and is not proper. The \(n=2\), \(\delta=0\) row is the one worked through in
Remark \ref{rem-delta0-degrees}, its improper half \(14,30,55,91\) being the cubic with third
difference \(2\) and its proper half \(r+1\). In the proper table every row with \(\delta\ge n\)
repeats the one above it, there being nothing left for a wider gap to decouple; the improper table
stops at \(\delta=n+1\), its first saturated row.

\begingroup\footnotesize
\begin{center}
\begin{tabular}{@{}ccrrrrr@{}}
\toprule
\(n\) & \(\delta\) & \(r=1\) & \(r=2\) & \(r=3\) & \(r=4\) & \(r=5\)\\
\midrule
\(2\) & \(0\) & 5 & 14 & 30 & 55 & 91\\
 & \(1\) & 5 & 20 & 76 & 285 & 1\,065\\
 & \(2\) & 5 & 24 & 115 & 551 & 2\,640\\
 & \(3\) & 5 & 25 & 125 & 625 & 3\,125\\
\addlinespace
\(3\) & \(0\) & 10 & 43 & 125 & 290 & 581\\
 & \(1\) & 10 & 69 & 426 & 2\,532 & 14\,847\\
 & \(2\) & 10 & 91 & 821 & 7\,401 & 66\,712\\
 & \(3\) & 10 & 99 & 980 & 9\,701 & 96\,030\\
 & \(4\) & 10 & 100 & 1\,000 & 10\,000 & 100\,000\\
\addlinespace
\(4\) & \(0\) & 26 & 194 & 816 & 2\,491 & 6\,202\\
 & \(1\) & 26 & 368 & 4\,200 & 44\,021 & 445\,373\\
 & \(2\) & 26 & 556 & 11\,521 & 237\,440 & 4\,888\,880\\
 & \(3\) & 26 & 657 & 16\,589 & 418\,857 & 10\,575\,748\\
 & \(4\) & 26 & 675 & 17\,524 & 454\,949 & 11\,811\,150\\
 & \(5\) & 26 & 676 & 17\,576 & 456\,976 & 11\,881\,376\\
\addlinespace
\(5\) & \(0\) & 96 & 1\,433 & 9\,315 & 38\,800 & 122\,801\\
 & \(1\) & 96 & 3\,405 & 81\,845 & 1\,667\,849 & 31\,570\,587\\
 & \(2\) & 96 & 6\,264 & 373\,078 & 21\,733\,283 & 1\,259\,247\,991\\
 & \(3\) & 96 & 8\,454 & 739\,600 & 64\,670\,555 & 5\,654\,555\,977\\
 & \(4\) & 96 & 9\,165 & 874\,945 & 83\,527\,401 & 7\,974\,017\,466\\
 & \(5\) & 96 & 9\,215 & 884\,544 & 84\,907\,009 & 8\,150\,188\,320\\
 & \(6\) & 96 & 9\,216 & 884\,736 & 84\,934\,656 & 8\,153\,726\,976\\
\addlinespace
\(6\) & \(0\) & 553 & 19\,145 & 204\,141 & 1\,195\,136 & 4\,891\,732\\
 & \(1\) & 553 & 60\,586 & 3\,443\,671 & 147\,428\,310 & 5\,540\,778\,582\\
 & \(2\) & 553 & 145\,746 & 30\,713\,150 & 6\,051\,460\,353 & 1\,168\,581\,488\,805\\
 & \(3\) & 553 & 245\,035 & 105\,135\,755 & 44\,903\,400\,311 & 19\,165\,840\,095\,040\\
 & \(4\) & 553 & 296\,059 & 158\,343\,036 & 84\,684\,938\,944 & 45\,291\,110\,452\,581\\
 & \(5\) & 553 & 305\,618 & 168\,901\,132 & 93\,343\,953\,511 & 51\,586\,946\,480\,989\\
 & \(6\) & 553 & 305\,808 & 169\,111\,271 & 93\,518\,227\,055 & 51\,715\,410\,450\,144\\
 & \(7\) & 553 & 305\,809 & 169\,112\,377 & 93\,519\,144\,481 & 51\,716\,086\,897\,993\\
\bottomrule
\end{tabular}
\end{center}
\endgroup
\captionof{table}{\(\lvert\mathrm{FF}(n;\vek d)\rvert\) for the constant-gap shift vector
\(\vek d=(0,\delta,\dots,(r-1)\delta)\).}
\label{tab-app-imp}

\begingroup\footnotesize
\begin{center}
\begin{tabular}{@{}ccrrrrr@{}}
\toprule
\(n\) & \(\delta\) & \(r=1\) & \(r=2\) & \(r=3\) & \(r=4\) & \(r=5\)\\
\midrule
\(2\) & \(0\) & 2 & 3 & 4 & 5 & 6\\
 & \(1\) & 2 & 4 & 8 & 16 & 32\\
 & \(2\) & 2 & 4 & 8 & 16 & 32\\
 & \(3\) & 2 & 4 & 8 & 16 & 32\\
\addlinespace
\(3\) & \(0\) & 5 & 12 & 22 & 35 & 51\\
 & \(1\) & 5 & 22 & 95 & 409 & 1\,760\\
 & \(2\) & 5 & 25 & 125 & 625 & 3\,125\\
 & \(3\) & 5 & 25 & 125 & 625 & 3\,125\\
 & \(4\) & 5 & 25 & 125 & 625 & 3\,125\\
\addlinespace
\(4\) & \(0\) & 16 & 69 & 184 & 385 & 696\\
 & \(1\) & 16 & 162 & 1\,478 & 13\,100 & 115\,128\\
 & \(2\) & 16 & 246 & 3\,776 & 57\,956 & 889\,536\\
 & \(3\) & 16 & 256 & 4\,096 & 65\,536 & 1\,048\,576\\
 & \(4\) & 16 & 256 & 4\,096 & 65\,536 & 1\,048\,576\\
 & \(5\) & 16 & 256 & 4\,096 & 65\,536 & 1\,048\,576\\
\addlinespace
\(5\) & \(0\) & 70 & 627 & 2\,592 & 7\,385 & 16\,926\\
 & \(1\) & 70 & 1\,858 & 37\,181 & 675\,786 & 11\,847\,871\\
 & \(2\) & 70 & 3\,805 & 199\,805 & 10\,436\,280 & 544\,655\,005\\
 & \(3\) & 70 & 4\,857 & 336\,980 & 23\,379\,749 & 1\,622\,092\,290\\
 & \(4\) & 70 & 4\,900 & 343\,000 & 24\,010\,000 & 1\,680\,700\,000\\
 & \(5\) & 70 & 4\,900 & 343\,000 & 24\,010\,000 & 1\,680\,700\,000\\
 & \(6\) & 70 & 4\,900 & 343\,000 & 24\,010\,000 & 1\,680\,700\,000\\
\addlinespace
\(6\) & \(0\) & 457 & 9\,904 & 67\,448 & 269\,765 & 798\,101\\
 & \(1\) & 457 & 38\,232 & 1\,816\,426 & 70\,055\,219 & 2\,478\,448\,508\\
 & \(2\) & 457 & 104\,870 & 20\,596\,002 & 3\,895\,494\,074 & 729\,778\,783\,985\\
 & \(3\) & 457 & 183\,337 & 72\,771\,631 & 28\,858\,243\,615 & 11\,443\,058\,404\,405\\
 & \(4\) & 457 & 208\,582 & 95\,199\,955 & 43\,450\,688\,041 & 19\,831\,546\,046\,752\\
 & \(5\) & 457 & 208\,849 & 95\,443\,993 & 43\,617\,904\,801 & 19\,933\,382\,494\,057\\
 & \(6\) & 457 & 208\,849 & 95\,443\,993 & 43\,617\,904\,801 & 19\,933\,382\,494\,057\\
 & \(7\) & 457 & 208\,849 & 95\,443\,993 & 43\,617\,904\,801 & 19\,933\,382\,494\,057\\
\bottomrule
\end{tabular}
\end{center}
\endgroup
\captionof{table}{\(\lvert\mathrm{FF}_{\mathrm{pr}}(n;\vek d)\rvert\), the proper count, for the
same shift vectors.}
\label{tab-app-pro}
\section*{Acknowledgements}
\label{sec:org616cb4e}
This paper uses Svante Linusson's 1999 paper structurally, and not only as a citation: the
refinement he introduced for his own recursion turns out to be exactly the statistic the rank-\(r\)
count requires, and the present paper would have no content without it. Aliaksandra Kupreyeva's Bachelor's thesis \autocite{Kupreyeva2019}
(Linköping University, 2019; report LiTH-MAT-EX--2019/05--SE), written under my supervision, is where
counting questions about \(f\)-vectors entered this project.

Just as Geheimrat Goethe had the diligent Friedrich Wilhelm Riemer to help him write \emph{Die
Wahlverwandtschaften}, just as Grothendieck had Jean Dieudonné to help him complete EGA, I have
\textbf{Claude}! What exactly that amounted to is set out in the disclosure section below.

Computations were carried out in SageMath \autocite{SageMath} (version 10.9), in Python, and in
Macaulay2 \autocite{Macaulay2} (version 1.26.06). The Macaulay2 work used Amata and Crupi's own
\texttt{ExteriorModules} package \autocite{AmataCrupi2021}. All three are free and open-source software,
and this paper would have been appreciably harder to write without them.
\section*{Disclosure of AI assistance}
\label{sec:org4c6d21b}
This manuscript was produced with substantial assistance from a large language model, Claude
(Anthropic), and the extent of it is set out here rather than left to be inferred. It is more
extensive in this paper than in its companion.

The model: identified that Linusson's refined count \(E^p(n,k)\) is the joint-distribution object the
rank-\(r\) problem needs, and proved Lemma \ref{lem-joint}; found and proved the chaining reformulation
of Theorem \ref{thm-chaining}, including the dictionary of Lemma \ref{lem-dictionary} and the hypotheses
recorded in Remark \ref{rem-sorted}; assembled and proved Theorem \ref{thm-main} and its consequences
(Corollary \ref{cor-r1}, Propositions \ref{prop-threshold} and \ref{prop-delta0}); found and proved
Lemma \ref{lem-lexproper}, without which the proper variant of Theorem \ref{thm-proper} would not follow;
derived the growth results of Section \ref{sec-growth}, including Theorem \ref{thm-poly-p}; transcribed
Linusson's Theorem 3.2 from the printed page and validated it against both of his preprint's
appendix tables;
found the Alexander-duality argument of Section \ref{sec-statistics} (Lemma \ref{lem-alexander},
Corollary \ref{cor-persymmetry}, Proposition \ref{prop-fnn2}); proved Theorem \ref{thm-delta0-degrees} and
found the digraph reading of Remark \ref{rem-delta0-digraph}; found the explicit inverse of Lemma
\ref{lem-explicit-inverse}; proved Propositions \ref{prop-normalform} and \ref{prop-perron-threshold} and
Theorems \ref{thm-asymptotics-p} and \ref{thm-asymptotics-p-proper}; computed the tables of the
appendix; wrote the Lean formalisation described in Section \ref{sec-verification}, in its
entirety; wrote all the supporting code; and drafted the exposition. The author set the problem,
directed the scope, and required the separation of proved from verified results that Section
\ref{sec-intro}'s table records. He also required that the combinatorial core be formalised and
machine-checked, and the formalisation was then carried out by the model.

Two corrections the model made to its own earlier work are worth recording, as evidence of what the
verification was for: an initial value of \(|S_9|\) was wrong by one, caught by checking against
OEIS A007695 and against Linusson's printed Table 5; and a threshold in Propositions \ref{prop-threshold}
and \ref{prop-proper-threshold} had been stated for the wrong one of the two variants.

Every structural claim in this paper was checked computationally, along a route independent of the
formula being tested, over the ranges stated in Section \ref{sec-verification}, which also lists the
few checks that compare one form of a formula with another. Values beyond those ranges, such as the
few table entries beyond brute-force range and the polynomials at \(n=7\), come from the formulas
themselves, which are proved and, in their combinatorial form, formalised. The reference list has
been checked against primary sources. The author has verified the results to the best of his ability, made the final edits, and
assumes full responsibility for any errors, mistakes or gaps that remain.
\section*{Funding}
\label{sec:orgbf03d5f}
This research received no external funding. It was carried out as part of the author's regular duties
at Linköping University, within the fraction of that employment allocated to research.
\section*{Conflicts of interest}
\label{sec:orgd28a2cf}
The author declares no financial or non-financial conflicts of interest. The nature and extent of
large-language-model assistance in producing this manuscript is disclosed in full in the section
above.

\end{document}